\addspace\texttt{\mkbibbrackets{\thefield{eprintclass}}}}}}
\addspace\texttt{\mkbibbrackets{\thefield{eprintclass}}}}}}
\newtheorem{thm}{Theorem}
\newtheorem{introthm}{Theorem}
\newtheorem{lemma}[thm]{Lemma}
\newtheorem{prop}[thm]{Proposition}
\newtheorem{cor}[thm]{Corollary}
\theoremstyle{definition}
\newtheorem{dfn}[thm]{Definition}
\newtheorem{ex}[thm]{Example}
\theoremstyle{remark}
\newtheorem{rem}[thm]{Remark}
\numberwithin{thm}{section}						
\crefname{thm}{Theorem}{Theorems}
\Crefname{thm}{Theorem}{Theorems}
\crefname{introthm}{Theorem}{Theorems}
\Crefname{introthm}{Theorem}{Theorems}
\crefname{lemma}{Lemma}{Lemmas}
\Crefname{lemma}{Lemma}{Lemmas}
\crefname{prop}{Proposition}{Propositions}
\Crefname{prop}{Proposition}{Propositions}
\crefname{cor}{Corollary}{Corollaries}
\Crefname{cor}{Corollary}{Corollaries}
\crefname{dfn}{Definition}{Definitions}
\Crefname{dfn}{Definition}{Definitions}
\crefname{ex}{Example}{Examples}
\Crefname{ex}{Example}{Examples}
\crefname{rem}{Remark}{Remarks}
\Crefname{rem}{Remark}{Remarks}
\crefname{equation}{}{}
\Crefname{equation}{}{}
\crefname{enumi}{}{}
\Crefname{enumi}{}{}
\crefname{tfaei}{}{}
\Crefname{tfaei}{}{}
\setlist[itemize]{itemsep=0mm}
\setlist[enumerate]{itemsep=0mm}
\setlist[enumerate,1]{label=\arabic*)}	
\newlist{tfae}{enumerate}{1}
\setlist[tfae]{itemsep=0mm}
\setlist[tfae]{label=\Roman*)}
\newcommand{\ints}{\mathbb{Z}}
\newcommand{\groundfield}{\Bbbk}
\newcommand{\ol}[1]{\overline{#1}}
\renewcommand{\:}{\colon}
\newcommand{\imp}{\implies}		
\newcommand{\pmi}{\impliedby}	
\newcommand{\hookto}{\hookrightarrow}							
\newcommand{\tto}{\rightarrow\mathrel{\mkern-14mu}\rightarrow}	
\newcommand{\longto}[1]{\xrightarrow{#1}}						
\newenvironment{smallpmatrix}{\left(\begin{smallmatrix}}{\end{smallmatrix}\right)}
\newcommand{\cat}[1]{\mathscr{#1}}				
\DeclareMathOperator{\id}{id}					
\DeclareMathOperator{\cone}{cone}				
\DeclareMathOperator{\tria}{tria}				
\DeclareMathOperator{\thick}{thick}				
\DeclareMathOperator{\extclos}{extclos}			
\DeclareMathOperator{\Kar}{Kar}					
\newcommand{\rMod}{\mathbf{Mod}\textrm{-}}		
\newcommand{\rmodfd}{\mathbf{mod}_{\mathrm{fd}}\textrm{-}}		
\newcommand{\rprojfg}{\mathbf{proj}_{\mathrm{fg}}\textrm{-}}	
\newcommand{\rinjfg}{\mathbf{inj}_{\mathrm{fg}}\textrm{-}}		
\newcommand{\Proj}{\mathbf{Proj}}			
\newcommand{\DProj}{\mathbf{DProj}}			
\newcommand{\dgrMod}{\mathbf{dgMod}\textrm{-}}	
\newcommand{\heart}{\heartsuit}							
\newcommand{\coheart}{\raisebox{\depth}{\scalebox{1}[-1]{$\heartsuit$}}}	
\newcommand{\D}{\mathbf{D}}						
\newcommand{\Db}{\mathbf{D}^\mathrm{b}}			
\newcommand{\Dfd}{\mathbf{D}_{\mathrm{fd}}}		
\newcommand{\K}{\mathbf{K}}						
\newcommand{\Kb}{\mathbf{K}^\mathrm{b}}			
\newcommand{\perf}{\mathbf{perf}}				
\newcommand{\KbCAT}{\mathbb{K}^b}				
\newcommand{\DbCAT}{\mathbb{D}^b}				
\newcommand{\Hom}{\mathrm{Hom}}					
\newcommand{\End}{\mathrm{End}}					
\newcommand{\Ext}{\mathrm{Ext}}					
\newcommand{\RHom}{\mathrm{RHom}}				
\newcommand{\REnd}{\mathrm{REnd}}				
\newcommand{\Lotimes}{\otimes^\mathrm{L}}		
\newcommand{\simplem}{\mathcal{L}}				
\newcommand{\silting}{\mathcal{P}}				
\newcommand{\comp}{\mathrm{c}}					
\newcommand{\dg}{\mathrm{dg}}					
\newcommand{\coop}{\mathrm{coop}}				
\title[Derived projectives and simple-minded\slash silting Koszul duality]{Derived projective covers and Koszul duality of simple-minded and silting collections}
\author{Lukas Bonfert}
\address{Max Planck Institute for Mathematics, Bonn, Germany}
\email{bonfert@mpim-bonn.mpg.de}
\date{\today}
\keywords{triangulated categories, weight structures, t-structures, silting, simple-minded collections, derived projectives, Koszul duality}
\subjclass{18G80, 16E35, 18G35}
\begin{document}
	\begin{abstract}
		We introduce derived projective covers and explain how they are related to the notion of enough derived projectives.
		This provides an if-and-only-if criterion for when derived projective covers form a silting collection.
		We prove moreover a Koszul duality result for silting and simple-minded collections.
	\end{abstract}
	\maketitle
	\section{Introduction}
		In any triangulated category $\cat{C}$ there is a bijection between simple-minded collections in $\cat{C}$ and bounded \textbf{t}-structures with finite-length heart on $\cat{C}$, sending a \textbf{t}-structure $t$ to the set of simple objects in its heart $\heart_t$.
		Similarly, there is a bijection between (classical) silting collections in $\cat{C}$ and bounded weight structures (also known as co-\textbf{t}-structures) with Krull--Schmidt coheart, sending a weight structure $w$ to the set of indecomposable objects in its coheart $\coheart_w$.
		
		Weight structures and \textbf{t}-structures on (not necessarily the same) triangulated categories are closely related by the notion of \emph{orthogonality} introduced in \cite{BondarkoMotivicSpectralSequences}.
		For instance, for a finite-dimensional algebra $A$ \cite{KoenigYang} establishes a bijection between bounded weight structures on $\Kb(\rprojfg A)$ and bounded \textbf{t}-structures on $\Db(\rmodfd A)$ with finite-length heart, and this can be rephrased in terms of \textbf{w}-\textbf{t}-strict left orthogonality.
		Here we say that a weight structure $w$ is \textbf{w}-\textbf{t}-strictly left orthogonal to a \textbf{t}-structure $t$ if $(\cat{C}_{w\leq 0})^\perp=\cat{D}^{t>0}$ and $\prescript{\perp}{}{(\cat{D}^{t>0})}=\cat{C}_{w\leq 0}$, and similarly for $\cat{C}_{w\geq 0}$ and $\cat{D}^{t<0}$.
		This is slightly stronger than the strict orthogonality considered in \cite{BondarkoWeightTAndBack}.
		In terms of the corresponding silting collection $\silting$ and simple-minded collection $\simplem$, the bijection from \cite{KoenigYang} is characterized by the existence of a bijection $\phi\: \silting\to\simplem$ such that
		\begin{equation}
			\label{simplemsiltingrelintro}
			\Hom(P,L[m])\cong\begin{cases}
				\End(L)&\text{if $L=\phi(P)$, $m=0$},\\
				0&\text{otherwise}.
			\end{cases}
		\end{equation}
		This bijection sends $P\in\silting$ to the simple top of $t_{\geq 0}P\in\heart_t$.
		Recently, \cite{Fushimi} proved such bijections in the setup of \emph{ST pairs} from \cite{AdachiMizunoYang}.
		This provides a common generalization of the results from \cite{KoenigYang} as well as the analogous results for non-positive dg algebras with finite-dimensional total cohomology from \cite{BruestleYang} and homologically smooth non-positive dg algebras from \cite{KellerNicolasCluster}.
		For positive dg algebras, the results in \cite{KellerNicolas} provide further examples of ST pairs.
		Here we call a dg algebra $A$ \emph{(cohomologically) non-positive} if $H^n(A)=0$ for $n>0$, and \emph{(cohomologically) positive} if $H^n(A)=0$ for $n<0$ and $H^0(A)$ is semisimple.
		
		Our goal is to study the orthogonality relation in more detail.
		In particular, we consider the following three aspects, which are mostly independent of each other:
		\begin{enumerate}
			\item Characterization of orthogonality in terms of simple-minded and silting collections via derived projective objects (\cref{derivedprojectives}).
			\item Koszul duality between simple-minded and silting collections (\cref{koszulduality}).
			\item Naturality of orthogonality (\cref{naturality}).
		\end{enumerate}
		The paper is structured as follows.
		In \cref{definitions} we recall the required definitions.
		We consider various strictness levels of orthogonality, thereby refining the picture from \cite{BondarkoWeightTAndBack}.
		However, by \cref{orthogonalitylemma,adjacencylemma} (which mostly follow from arguments in \cite{BondarkoWeightTAndBack}) in many cases at least some of these coincide.
		
		Our first result relates silting collections to derived projective objects (also known as Ext-projective objects) which are an analog of projective objects in the triangulated setting.
		Analogously to the setting of abelian categories, \cite{GenoveseLowenvdBergh} introduced the term \emph{enough derived projectives}.
		In \cref{enoughderivedproj} we show that this definition agrees with the notion of \emph{enough Ext-projectives} from \cite{CoelhoSimoesPauksztelloPloog}.
		In \cite[Thm.~2.4]{CoelhoSimoesPauksztelloPloog} it is shown that this provides a criterion for the existence of a left adjacent weight structure, see \cref{derprojadjacency} for a slightly refined version.
		
		For a \textbf{t}-structure $t_\silting=(\silting^{\perp_{>0}},\silting^{\perp_{<0}})$ obtained from a silting collection $\silting$ in the sense of \cite{PsaroudakisVitoria}, an easy but important observation shows that $\silting$ consist of derived projective objects with respect to $t_\silting$.
		As is evident from \cref{simplemsiltingrelintro}, the relation of silting collections to simple-minded collections is somewhat similar to the relation of indecomposable projective objects to simple objects in finite-length abelian categories.
		To formalize this, in \cref{derprojcoverdef} we introduce \emph{derived projective covers}, and we show the following result:
		\begin{introthm}[\cref{siltingderprojconditions}]
			\label{siltingderprojconditionsintro}
			Let $t$ be a non-degenerate \textbf{t}-structure on $\cat{D}$ with finite-length heart.
			Let $\simplem$ be a full set of isomorphism representatives of the simple objects in $\heart_t$ and $\silting$ a full set of isomorphism representatives of the indecomposable derived projectives.
			Then the following are equivalent:
			\begin{tfae}
				\item $t$ is silting (and $\silting$ is the silting collection).
				\item There is a bijection $\phi\: \silting\to\simplem$ satisfying \cref{simplemsiltingrelintro}.
				\item Every $L\in\simplem$ admits a derived projective cover (and $\silting$ is the set of these derived projective covers).
				\item $\cat{D}$ has enough derived projectives with respect to $t$.
			\end{tfae}
		\end{introthm}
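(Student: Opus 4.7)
The plan is to prove the cycle
\[
(\mathrm{I})\imp(\mathrm{II})\imp(\mathrm{III})\imp(\mathrm{IV})\imp(\mathrm{I}),
\]
with the main obstacle in the closing implication. The two key inputs are the observation (recalled just before the theorem) that the objects of a silting collection are derived projective with respect to their associated \textbf{t}-structure, and the adjacency criterion \cref{derprojadjacency}.

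For (I)$\imp$(II), assume $t=t_\silting$. Each $P\in\silting$ is derived projective by the observation, and the positive-degree Hom-orthogonality in $\silting$ forces $t_{\geq 0}P\in\heart_t$ to have a unique simple quotient, which I take to be $\phi(P)$. A direct computation using aisle-truncation and derived projectivity then verifies \cref{simplemsiltingrelintro} and shows that $\phi$ is a bijection. For (II)$\imp$(III), set $P_L\coloneqq\phi^{-1}(L)$; the vanishing $\Hom(P_L,L'[m])=0$ for $m>0$ and $L'\in\simplem$ extends to $\Hom(P_L,X[m])=0$ for arbitrary $X\in\heart_t$ by induction on composition length in the finite-length heart, so $P_L$ is derived projective. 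Any non-zero map $P_L\to L$ (which exists because $\Hom(P_L,L)\cong\End(L)$) is an epimorphism from an indecomposable derived projective onto the simple object $L$, and therefore satisfies the defining properties of a derived projective cover from \cref{derprojcoverdef}. The implication (III)$\imp$(IV) is a straightforward length induction combining simple covers along composition series: for $X\in\heart_t$ of length $n$, pick a short exact sequence $0\to X'\to X\to L\to 0$ with $L\in\simplem$, lift $P_L\to L$ to $P_L\to X$ using derived projectivity of $P_L$, and combine with an inductive cover $P'\to X'$.

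The main obstacle is (IV)$\imp$(I). By \cref{derprojadjacency} the hypothesis yields a bounded weight structure $w$ left \textbf{w}-\textbf{t}-strictly adjacent to $t$, and the task splits into two parts: (a) identifying $\coheart_w$ with the additive closure of $\silting$, so every indecomposable derived projective appears as an indecomposable summand of a coheart object and vice versa; and (b) proving $\coheart_w$ is Krull--Schmidt, which then promotes $\silting$ to a bona fide silting collection. For (a), \textbf{w}-\textbf{t}-strict adjacency places $\coheart_w$ inside the derived projectives, while the enough-derived-projectives hypothesis together with $w$-truncation recovers every indecomposable derived projective as a summand of a coheart object. For (b), I plan to use the functor $H^0_t\:\coheart_w\to\heart_t$ induced by $t$-truncation to transport idempotent decompositions from the finite-length (hence Krull--Schmidt) heart back to $\coheart_w$; \textbf{w}-\textbf{t}-strict adjacency should ensure that this functor reflects isomorphisms between summands, making the transfer valid. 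Once (a) and (b) are in place, \textbf{w}-\textbf{t}-strict adjacency forces $t=t_\silting$, closing the cycle.
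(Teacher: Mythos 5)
Your (I)$\imp$(II), (II)$\imp$(III), and (III)$\imp$(IV) follow the same lines as the paper, modulo some redundancy: in (II)$\imp$(III) you argue that $P_L$ is derived projective via a length induction, but by hypothesis $\silting$ is already the set of indecomposable derived projectives, so this is automatic; and in (III)$\imp$(IV) the naive combination $P'\oplus P_L\to X$ along a composition series is only an epimorphism from a projective (not a cover), though this suffices since the definition of enough derived projectives only asks for enough projectives plus lifting.

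The genuine gap is in (IV)$\imp$(I). You route through \cref{derprojadjacency} to obtain a left adjacent weight structure, but that corollary requires $t$ to be bounded above and $\cat{D}$ to be Hom-finite — neither is among the hypotheses of the theorem (only non-degeneracy and finite-length heart are assumed). Even granting these extra assumptions, you then need: (a) $\coheart_w=\Kar_\cat{D}(\silting)$, (b) $\coheart_w$ Krull--Schmidt, and (c) the deduction that adjacency plus (a) and (b) force $t=t_\silting$. Step (c) is not purely formal: from \cref{adjacencylemma} one gets $\cat{D}^{t\leq 0}=(\cat{C}_{w>0})^\perp$, but to identify this with $\silting^{\perp_{>0}}$ one needs $\cat{C}_{w>0}$ to be generated by shifts of the coheart, which again relies on $w$ being bounded above. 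The paper's proof of (IV)$\imp$(I) avoids all of this machinery: it uses \cref{equivalencederivedprojheartproj} to identify $H_t^0(\silting)$ with the indecomposable projectives of $\heart_t$, the isomorphism $\Hom_\cat{D}(P,X[n])\cong\Hom_{\heart_t}(H_t^0(P),H_t^n(X))$ from \cref{derivedprojtruncationfullyfaithful}, and non-degeneracy of $t$ to show directly that $\cat{D}^{t\leq 0}=\silting^{\perp_{>0}}$ and $\cat{D}^{t\geq 0}=\silting^{\perp_{<0}}$. This is both more elementary and valid under the stated hypotheses, whereas your detour via adjacent weight structures smuggles in stronger assumptions and leaves the key deduction unverified.
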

		This result is somewhat analogous to \cite[Thm.~2.4]{CoelhoSimoesPauksztelloPloog}.
		As an application, in \cref{simplemindedsiltingorthogonal} we show that for a \textbf{t}-structure obtained from a simple-minded collection $\simplem$ and a weight structure obtained from a silting collection $\silting$, the relations \cref{simplemsiltingrelintro} characterize orthogonality, and moreover that these relations are equivalent to $\silting$ consisting of the derived projective covers of $\simplem$.
		This is not very surprising, as it is similar to the results contained in \cite{KoenigYang}.
		
		\Cref{siltingderprojconditionsintro} also allows us to study the bijections between weight structures and \textbf{t}-structures in more detail.
		For this, in \cref{wtpairdef} we introduce equivalent axioms for the ST pairs from \cite{AdachiMizunoYang}, and rename them to \emph{WT pairs} as this definition only uses weight structures and \textbf{t}-structures, but no silting collections.
		In \cref{weighttbijections} we show that at the level of weight structures and \textbf{t}-structures the bijection from \cite{Fushimi} is given by \textbf{w}-\textbf{t}-strict orthogonality.
		
		Our second main result is related to the apparent duality in the definition of weight structures and \textbf{t}-structures, and more precisely to a tentative Koszul duality of these notions.
		To motivate this, recall that the classical Koszul duality from \cite{BeilinsonGinzburgSoergel}, \cite{MazorchukOvsienkoStroppel} provides an equivalence of graded derived categories that swaps simples and indecomposable projectives.
		For a formulation in terms of \textbf{t}-structures and weight structures see \cite[\S 2.4]{EberhardtStroppel}.
		
		In our setup, the analogs of simple objects and indecomposable projective objects are simple-minded collections and silting collections, as evidenced by the standard examples and \cref{siltingderprojconditionsintro}.
		As a formalization of the tentative Koszul duality, Bernhard Keller suggested the following result, which uses the dg Koszul duality from \cite{KellerDerivingDGCats}:
		\begin{introthm}[\cref{koszuldualitygeneral}]
			\label{koszuldualitygeneralintro}
			Let $\cat{T}=H^0(\widetilde{\cat{T}})$ be a compactly generated dg-enhanced triangulated category, and let $\silting$ be a compact silting collection in $\cat{T}$ such that $\End_\cat{T}(\bigoplus_{P\in\silting} P)$ is finite-dimensional.
			Let $\simplem$ be the set of simple objects in the heart of the silting \textbf{t}-structure associated with $\silting$, and suppose that $\End_\cat{T}(L)$ is $1$-dimensional for each $L\in\simplem$. 
			Note that $\simplem$ is a simple-minded collection in $\cat{D}=\tria_\cat{T}(\simplem)$.
			\begin{enumerate}
				\item The dg algebra $\End_{\widetilde{\cat{T}}}(\bigoplus_{L\in\simplem}L)$ is the dg Koszul dual of $\End_{\widetilde{\cat{T}}}(\bigoplus_{P\in\silting}P)$.
				\item If $H^n(\End_{\widetilde{\cat{T}}}(\bigoplus_{P\in\silting}P))$ is finite-dimensional for all $n\in\ints$, then $\End_{\widetilde{\cat{T}}}(\bigoplus_{P\in\silting}P)$ is the dg Koszul dual of $\End_{\widetilde{\cat{T}}}(\bigoplus_{L\in\simplem}L)$.
			\end{enumerate}
		\end{introthm}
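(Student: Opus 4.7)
The plan is to use Morita theory to transport the situation to $\D(E_\silting)$ and then invoke Keller's dg Koszul duality \cite{KellerDerivingDGCats}, writing $E_\silting = \End_{\widetilde{\cat{T}}}(\bigoplus_{P\in\silting} P)$ and $E_\simplem = \End_{\widetilde{\cat{T}}}(\bigoplus_{L\in\simplem} L)$. Since $\cat{T}$ is compactly generated and $\silting$ is a compact silting collection, it generates $\cat{T}$ as a localizing subcategory, so Keller's Morita theorem yields a dg-enhanced triangulated equivalence $\Psi\colon \cat{T} \xrightarrow{\sim} \D(E_\silting)$ induced by $\RHom_{\widetilde{\cat{T}}}(\bigoplus_\silting P, -)$, under which $\bigoplus_\silting P$ corresponds to $E_\silting$ itself. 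Since $\silting$ is silting, $H^n(E_\silting) = 0$ for $n > 0$, so the silting $\mathbf{t}$-structure corresponds to the standard $\mathbf{t}$-structure on $\D(E_\silting)$, whose heart is the module category over $H^0(E_\silting)$.

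For part (1), the plan is to identify $\Psi(\simplem)$ with the simple $H^0(E_\silting)$-modules. By \cref{siltingderprojconditionsintro}, the orthogonality relations \cref{simplemsiltingrelintro} hold between $\silting$ and $\simplem$. Transporting along $\Psi$, each $\Psi(L)$ is concentrated in degree $0$ with at most one non-vanishing idempotent component (namely at $\phi^{-1}(L)$), so it is annihilated by the Jacobson radical of $H^0(E_\silting)$ and has $1$-dimensional endomorphism algebra by the hypothesis on $L$. The bijection $\phi$ ensures that every simple $H^0(E_\silting)$-module arises this way exactly once. Since by Keller's definition the dg Koszul dual of $E_\silting$ is $\REnd_{E_\silting}$ of a complete set of simple modules, the dg-enhanced equivalence $\Psi$ yields
\[
	E_\simplem \;\simeq\; \REnd_{E_\silting}\Bigl(\bigoplus_{L\in\simplem}\Psi(L)\Bigr),
\]
as required.

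For part (2), I apply Keller's reconstruction statement for dg Koszul duality. Symmetrically to part (1), a Morita-type functor $\cat{D} = \tria_\cat{T}(\simplem) \to \D(E_\simplem^\op)$ given by $\RHom_{\widetilde{\cat{T}}}(-, \bigoplus_\simplem L)$ sends $\bigoplus_\silting P$ to an object whose endomorphism dg algebra is the double dg Koszul dual of $E_\silting$, equivalently the dg Koszul dual of $E_\simplem$. By Keller's reconstruction theorem this double dual is quasi-isomorphic to $E_\silting$ precisely under the finiteness hypothesis $\dim H^n(E_\silting) < \infty$ for every $n \in \ints$; the identification of $\silting$ with the set of derived projective covers of $\simplem$ (again via \cref{siltingderprojconditionsintro}) provides the required $\silting$-side input. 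I expect the main obstacle to be setting up the dg Koszul duality from \cite{KellerDerivingDGCats} cleanly in the present framework, with the augmentation of $E_\silting$ coming from the idempotent decomposition indexed by $\silting$, and verifying that our finite-dimensional cohomology assumption is exactly what Keller's reconstruction theorem requires for the double-dual recovery in part (2); once those points are secured, both statements reduce to unpacking the definitions on either side of $\Psi$.
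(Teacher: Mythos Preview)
Your approach to part (1) is essentially the same as the paper's: transport along the Morita equivalence $\Psi\colon \cat{T}\to\D(E_\silting)$ (the paper packages this as \cref{siltingequivalence}), identify $\Psi(\simplem)$ with the simple $H^0(E_\silting)$-modules, and then unwind the definition of the dg Koszul dual of the non-positive dg algebra $E_\silting$. The paper does not invoke \cref{siltingderprojconditionsintro} here but simply observes that $\Psi$ identifies the silting \textbf{t}-structure with the standard one on $\D(E_\silting)$, so the hearts and their simples match; your route via the orthogonality relations \cref{simplemsiltingrelintro} is an equally valid way to see this.

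For part (2) your core idea also matches the paper: once part (1) gives $E_\simplem\simeq (E_\silting)^{!,\dg}$, one applies a double-dual reconstruction result to recover $E_\silting$ as $(E_\simplem)^{!,\dg}$, and this is precisely where the hypothesis $\dim H^n(E_\silting)<\infty$ for all $n$ is used. The paper dispatches this in one line by citing \cite[Thm.~4.17]{FushimiDgKoszul} (not a result from \cite{KellerDerivingDGCats}, so ``Keller's reconstruction theorem'' is not quite the right attribution). The intermediate apparatus you set up --- the contravariant functor $\RHom_{\widetilde{\cat{T}}}(-,\bigoplus_{L\in\simplem} L)\colon \cat{D}\to\D(E_\simplem^\op)$ and the assertion that it sends $\bigoplus_{P\in\silting} P$ to an object whose derived endomorphism algebra \emph{is} the dg Koszul dual of $E_\simplem$ --- is unnecessary and would itself need justification: the dg Koszul dual of the positive dg algebra $E_\simplem$ is by definition $\REnd_{E_\simplem}$ of its augmenting simple modules (see \cref{negativepositivekoszulduality}), and matching these with the image of $P$ under your contravariant functor, together with passing between $E_\simplem$ and $E_\simplem^\op$, are steps you have not carried out. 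The cleaner route, which the paper takes, is to drop this and apply the double-dual theorem directly to $E_\silting$.
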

		This is inspired by \cite{BruestleYang}, see also the revised version \cite{BruestleYangUpdate}.
		In the case of finite-dimensional algebras or non-positive dg algebras with finite-dimensional total cohomology, the second part of \cref{koszuldualitygeneralintro} can also be shown by a construction from \cite{Zhang}, which was used there to construct a silting collection corresponding to a simple-minded collection.
		
		In the last section we show that \textbf{w}-\textbf{t}-strict orthogonality between weight structures and \textbf{t}-structures is natural with respect to weight exact functors and \textbf{t}-exact functors that are (in a certain sense) adjoint to each other.
		The setup of the main result \cref{orthogonalitynaturality} is somewhat technical, although the proof is straightforward and essentially the same as \cite[Prop.~4.4.5]{BondarkoWeightTAndBack}.
		In particular, it follows from this that the bijection between weight structures and \textbf{t}-structures from \cite{KoenigYang} is natural, see \cref{naturalityfdalg}.
		\subsection*{Acknowledgements.}
			This paper is part of my ongoing PhD project.
			I would like to thank Catharina Stroppel for many helpful discussions and feedback on various draft versions, and Bernhard Keller for sharing his ideas about Koszul duality between weight structures and \textbf{t}-structures, and further helpful comments.
			Further thanks goes to Lidia Angeleri H\"ugel and Jorge Vit\'oria for making me aware of \cite{GenoveseLowenvdBergh}, and for asking me the right questions about derived projectives.
			Furthermore, I thank the reviewers for their detailed comments.
			
			This project was supported by the Max Planck Institute for Mathematics (IMPRS Moduli Spaces) and the Hausdorff Center for Mathematics, which is funded by the Deutsche Forschungsgemeinschaft (DFG, German Research Foundation) under Germany’s Excellence Strategy  -- EXC-2047/1 -- 390685813.
	\section{Definitions}\label{definitions}
		We begin by recalling the definitions of \textbf{t}-structures, weight structures, simple-minded collections and silting collections.
		For silting collections, we also compare two slightly different definitions.
		Finally we recall the notion of orthogonality between weight structures and \textbf{t}-structures, and compare the various strictness levels in special cases.
		
		Unless explicitly mentioned, all categories will be linear over some (fixed) field $\groundfield$.
		For subcategories $\cat{A},\cat{B}\subseteq\cat{C}$ we write $\cat{A}\perp\cat{B}$ if $\Hom_\cat{C}(A,B)=0$ for all $A\in\cat{A}$, $B\in\cat{B}$.
		Moreover we write $\cat{A}^\perp=\{C\in\cat{C}\mid \Hom_\cat{C}(A,C)=0\;\forall A\in\cat{A}\}$ and $\prescript{\perp}{}{\cat{A}}=\{C\in\cat{C}\mid \Hom_\cat{C}(C,A)=0\;\forall A\in\cat{A}\}$.
		(Dg) modules over a (dg) algebra will be right modules, unless stated otherwise.
		\subsection{\texorpdfstring{\textbf{t}-structures}{t-structures}}
			The notion of \textbf{t}-structures on triangulated categories was introduced in \cite{BBD}.
			\begin{dfn}
				A \emph{\textbf{t}-structure} on a triangulated category $\cat{D}$ is a pair $t=(\cat{D}^{t\leq 0},\cat{D}^{t\geq 0})$ of strict full subcategories such that
				\begin{itemize}
					\item $\cat{D}^{t\leq 0}[1]\subseteq\cat{D}^{t\leq 0}$ and $\cat{D}^{t\geq 0}[-1]\subseteq\cat{D}^{t\geq 0}$,
					\item $\cat{D}^{t\leq 0}\perp\cat{D}^{t\geq 0}[-1]$,
					\item for all $X\in\cat{D}$ there is a triangle (called \emph{\textbf{t}-decomposition} of $X$)
						\begin{equation*}
							t_{\leq 0}X\to X\to t_{>0}X\to t_{\leq 0}X[1]
						\end{equation*}
						with $t_{\leq 0}X\in\cat{D}^{t\leq 0}$ and $t_{>0}X\in\cat{D}^{t\geq 0}[-1]$.
				\end{itemize}
				The full subcategory $\heart_t=\cat{D}^{t\leq 0}\cap\cat{D}^{t\geq 0}$ is called the \emph{heart} of $t$.
			\end{dfn}
			We also write $\cat{D}^{t>0}=\cat{D}^{t\geq 0}[-1]$ and $\cat{D}^{t<0}=\cat{D}^{t\leq 0}[1]$, and also $\cat{D}^{t\geq n}=\cat{D}^{t\geq 0}[-n]$ for $n\in\ints$ (and analogously $\cat{D}^{t\leq n}$).
			
			A \textbf{t}-structure $t$ is called \emph{non-degenerate} if $\bigcap_{n\in\ints} \cat{D}^{t\leq n}=\{0\}=\bigcap_{n\in\ints} \cat{D}^{t\geq n}$.
			It is \emph{bounded above} if $\cat{D}=\bigcup_{n\in\ints} \cat{D}^{t\leq n}$, \emph{bounded below} if $\cat{D}=\bigcup_{n\in\ints} \cat{D}^{t\geq n}$, and \emph{bounded} if $\cat{D}=\tria_\cat{D}(\heart_t)$.
			Here $\tria_\cat{D}(\heart_t)$ denotes the triangulated subcategory of $\cat{D}$ generated by $\heart_t$.
			Note that $t$ is bounded if and only if it is bounded above and bounded below.
			
			Recall that \textbf{t}-decompositions are unique up to isomorphism, and furthermore $t_{\geq 0}\: \cat{D}\to \cat{D}^{t\geq 0}$ and $t_{\leq 0}\: \cat{D}\to\cat{D}^{t\leq 0}$ define functors that are left (resp.~right) adjoint to the respective inclusions \cite[Prop.~1.3.3]{BBD}.
			Also recall that $\cat{D}^{t\leq 0}=\prescript{\perp}{}{(\cat{D}^{t>0})}$ and $\cat{D}^{t\geq 0}=(\cat{D}^{t<0})^\perp$.
		\subsection{Weight structures}
			Weight structures (also known as co-\textbf{t}-structures) were originally defined in \cite{Bondarko} and \cite{Pauksztello}.
			\begin{dfn}
				A \emph{weight structure} on a triangulated category $\cat{C}$ is a pair $w=(\cat{C}_{w\leq 0},\cat{C}_{w\geq 0})$ of Karoubi-closed full subcategories such that
				\begin{itemize}
					\item $\cat{C}_{w\leq 0}[1]\subseteq\cat{C}_{w\leq 0}$ and $\cat{C}_{w\geq 0}[-1]\subseteq\cat{C}_{w\geq 0}$,
					\item $\cat{C}_{w\geq 0}[-1]\perp\cat{C}_{w\leq 0}$,
					\item for all $X\in\cat{C}$ there is a triangle (called \emph{weight decomposition} of $X$)
						\begin{equation*}
							w_{>0}X\to X\to w_{\leq 0}X\to w_{>0}X[1]
						\end{equation*}
						with $w_{>0}X\in\cat{C}_{w\geq 0}[-1]$ and $w_{\leq 0}X\in\cat{C}_{w\leq 0}$.
				\end{itemize}
				The full subcategory $\coheart_w=\cat{C}_{w\leq 0}\cap\cat{C}_{w\geq 0}$ is called the \emph{coheart} of $w$.
			\end{dfn}
			As for \textbf{t}-structures we write $\cat{C}_{w>0}=\cat{C}_{w\geq 0}[-1]$, $\cat{C}_{w\geq n}=\cat{C}_{w\geq 0}[-n]$, and so on.
			
			A weight structure $w$ is called \emph{non-degenerate} if $\bigcap_{n\in\ints} \cat{C}_{w\leq n}=\{0\}=\bigcap_{n\in\ints} \cat{C}_{w\geq n}$.
			It is \emph{bounded above} if $\cat{C}=\bigcup_{n\in\ints} \cat{C}_{w\leq n}$, \emph{bounded below} if $\cat{C}=\bigcup_{n\in\ints} \cat{C}_{w\geq n}$, and \emph{bounded} if $\cat{C}=\thick_\cat{C}(\coheart_w)$.
			Here $\thick_\cat{C}(\coheart_w)$ denotes the thick subcategory of $\cat{C}$ generated by $\coheart_w$.
			Note that $w$ is bounded if and only if it is bounded above and bounded below.
			
			Analogously to the situation for \textbf{t}-structures we have $\cat{C}_{w\leq 0}=(\cat{C}_{w>0})^\perp$ and $\cat{C}_{w\geq 0}=\prescript{\perp}{}{(\cat{C}_{w<0})}$, see \cite[Prop.~1.3.3]{Bondarko}.
			In contrast to \textbf{t}-decompositions, by \cite[Rem.~1.2.2]{Bondarko} weight decompositions are usually not unique.
			In particular $w_{\leq 0}$ and $w_{>0}$ do not define functors.
		\subsection{Simple-minded collections}
			The following definition is from \cite{AlNofayee} and \cite{KoenigYang}, and the axioms already appeared in \cite{RickardEquivalencesDerivedCategories}.
			\begin{dfn}
				A \emph{simple-minded collection} in a triangulated category $\cat{D}$ is a (not necessarily finite) set $\simplem$ of objects of $\cat{D}$ such that
				\begin{itemize}
					\item $\Hom_\cat{D}(L,L'[m])=0$ for all $L,L'\in\simplem$, $m<0$,
					\item $\Hom_\cat{D}(L,L')=0$ for $L,L'\in\simplem$, $L\neq L'$,
					\item $\End_\cat{D}(L)$ is a division algebra for all $L\in\simplem$,
					\item $\tria_\cat{D}(\simplem)=\cat{D}$.
				\end{itemize}
				A simple-minded collection is \emph{finite} if it consists of finitely many objects.
			\end{dfn}
			\begin{rem}
				Note that in contrast to most of the existing literature we do not assume simple-minded collections to be finite, see also \cite{Schnuerer} where infinite simple-minded collections are also studied.
				However, if a triangulated category $\cat{D}$ admits a finite simple-minded collection, then automatically any simple-minded collection in $\cat{D}$ is finite, since it follows from \cref{ttosimplem} that simple-minded collections form bases of the Grothendieck group of $\cat{D}$.
			\end{rem}
			An abelian category is \emph{finite-length} (or a \emph{length category}) if all of its objects have finite length.
			The definition of simple-minded collections is based on properties of the simple objects in the heart of a bounded \textbf{t}-structure with finite-length heart, and in fact specifying a simple-minded collection is equivalent to specifying such a \textbf{t}-structure.
			This is already mentioned in \cite[Rem.~1.3.14]{BBD}, and explicitly spelled out in \cite{AlNofayee}.
			\begin{prop}\leavevmode
				\label{ttosimplem}
				\begin{enumerate}
					\item Let $\simplem$ be a simple-minded collection in $\cat{D}$.
						Then $t=(\cat{D}^{t\leq 0},\cat{D}^{t\geq 0})$, with
						\begin{align*}
							\cat{D}^{t\leq 0}&=\extclos_\cat{D}\{L[m]\mid L\in\simplem, m\geq 0\},\\
							\cat{D}^{t\geq 0}&=\extclos_\cat{D}\{L[m]\mid L\in\simplem, m\leq 0\},
						\end{align*}
						is a bounded \textbf{t}-structure with finite-length heart, and $\simplem$ is a full set of isomorphism representatives of the simple objects in $\heart_t$.
					\item Let $t$ be a bounded \textbf{t}-structure on $\cat{D}$ such that $\heart_t$ is finite-length, and let $\simplem$ be a full set of isomorphism representatives of the simple objects in $\heart_t$.
						Then $\simplem$ is a simple-minded collection in $\cat{D}$.
				\end{enumerate}
			\end{prop}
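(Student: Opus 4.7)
The plan splits naturally into the two directions.

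For part (2), I would verify the simple-minded axioms for $\simplem$ directly. The Hom vanishing $\Hom_\cat{D}(L,L'[m])=0$ for $m<0$ follows from $L\in\heart_t\subseteq\cat{D}^{t\leq 0}$ and $L'[m]\in\cat{D}^{t\geq -m}\subseteq\cat{D}^{t>0}$, combined with the t-structure orthogonality $\cat{D}^{t\leq 0}\perp\cat{D}^{t>0}$. The vanishing of $\Hom(L,L')$ for distinct $L,L'\in\simplem$ and the division algebra property of $\End(L)$ are Schur's lemma in the abelian heart. For the generation condition, boundedness yields $\cat{D}=\tria_\cat{D}(\heart_t)$, and finite length means every object of $\heart_t$ is a finite iterated extension of elements of $\simplem$, hence lies in $\tria_\cat{D}(\simplem)$.

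For part (1), I would verify the t-structure axioms for $t=(\cat{D}^{t\leq 0},\cat{D}^{t\geq 0})$. The inclusion $\cat{D}^{t\leq 0}[1]\subseteq\cat{D}^{t\leq 0}$ (and its dual) is immediate from the definition as an extension closure of shifts of $\simplem$. The orthogonality $\cat{D}^{t\leq 0}\perp\cat{D}^{t>0}$ reduces on generators to $\Hom_\cat{D}(L,L'[n])=0$ for $n<0$, which is the simple-minded axiom; it then propagates to arbitrary objects in the extension closures via the long exact sequences obtained by applying $\Hom(-,-)$ to the defining distinguished triangles on either side. The existence of t-decompositions is the core step, which I would prove by devissage: let $\cat{D}'\subseteq\cat{D}$ be the full subcategory of objects admitting a t-decomposition. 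Each $L[m]$ lies in $\cat{D}'$ via a trivial decomposition (take $t_{>0}=0$ if $m\geq 0$ and $t_{\leq 0}=0$ if $m<0$). Using the octahedral axiom together with the orthogonality just established (which ensures that morphisms between the upper and lower parts of two t-decompositions factor compatibly), one checks that $\cat{D}'$ is closed under extensions, and it is trivially closed under shifts, so $\cat{D}'\supseteq\tria_\cat{D}(\simplem)=\cat{D}$.

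It remains to identify $\heart_t$. By construction $\simplem\subseteq\heart_t$, and I would show $\heart_t=\extclos_\cat{D}\{L\mid L\in\simplem\}$: the inclusion $\supseteq$ is clear, while the converse uses that any $X\in\heart_t$ is a finite iterated extension of shifts of elements of $\simplem$ (by $\tria_\cat{D}(\simplem)=\cat{D}$), and applying the truncations $H^i$ of the t-structure one just built forces all these shifts to vanish. This realizes $\heart_t$ as a finite-length abelian subcategory whose simple objects are exactly the $L\in\simplem$: any proper subobject $K\hookrightarrow L$ in $\heart_t$ would admit a nonzero map $L'\to K\hookrightarrow L$ from some $L'\in\simplem$ (taking a simple subobject of $K$), forcing $L'\cong L$ by Schur and hence $K=L$. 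Boundedness is immediate from $\cat{D}=\tria_\cat{D}(\simplem)\subseteq\tria_\cat{D}(\heart_t)$. The main obstacle is the devissage step in part (1): the octahedral bookkeeping to glue t-decompositions along a distinguished triangle is where the orthogonality does the real work, but it follows the classical BBD pattern.
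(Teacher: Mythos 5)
The paper does not give a proof of this proposition; it simply cites Al-Nofayee's propositions 2 and 4 and notes the arguments carry over to general triangulated categories. Your route is essentially the one Al-Nofayee takes, so the overall approach is right, and your treatment of part (2) is correct and complete.

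In part (1), however, the devissage step has a real gap. You claim that the orthogonality $\cat{D}^{t\leq 0}\perp\cat{D}^{t>0}$ (together with the octahedral axiom) shows that the class $\cat{D}'$ of objects admitting a t-decomposition is closed under extensions, because ``morphisms between the upper and lower parts of two t-decompositions factor compatibly.'' This is not so. Given a triangle $A\to B\to C\to A[1]$ with decompositions $u_A\to A\to v_A$ and $u_C\to C\to v_C$, the natural candidate for a decomposition of $B$ produces (after two octahedra) an intermediate object $W$ sitting in a triangle $v_A\to W\to u_C\to v_A[1]$, and the obstruction map $u_C\to v_A[1]$ lives in $\Hom(\cat{D}^{t\leq 0},\cat{D}^{t\geq 0})$, which the orthogonality $\cat{D}^{t\leq 0}\perp\cat{D}^{t>0}$ does \emph{not} control; indeed on generators this is $\Hom(L[m],L'[m'])$ with $m\geq 0$, $m'\leq 0$, and for $m=m'=0$, $L=L'$ it is $\End(L)\neq 0$. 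So the ``classical BBD pattern'' does not produce t-decompositions of extensions here. What actually makes the argument work is that every object of $\tria_\cat{D}(\simplem)$ carries a finite filtration with subquotients $L[m]$, $L\in\simplem$, and one sorts this filtration by shift degree: to swap an adjacent pair $L_i[m_i]$ (with $m_i<0$) below $L_{i+1}[m_{i+1}]$ (with $m_{i+1}\geq 0$), the connecting map lies in $\Hom(L_{i+1},L_i[m_i+1-m_{i+1}])$, which vanishes by the simple-minded axiom when $m_i+1-m_{i+1}<0$ and is an isomorphism or zero when equal to $0$ (using $\Hom(L,L')=0$ for $L\neq L'$ and that $\End(L)$ is a division algebra), so the pair can be swapped or cancelled. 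This uses the full strength of the simple-minded axioms, not just the orthogonality of the candidate aisle and co-aisle. The identification $\heart_t=\extclos_\cat{D}(\simplem)$ and the simplicity of the $L\in\simplem$ then follow by similar filtration arguments, and your sketch of these is essentially right once the devissage is repaired (except that you should take a $\simplem$-subobject of $K$ from its filtration, since simplicity of the $L\in\simplem$ is not yet known at that point).
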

			\begin{proof}
				See \cite[Prop.~2 and Prop.~4]{AlNofayee}.
				Although the propositions there are formulated only for the bounded derived category of a self-injective algebra, the proofs work in a general triangulated category without modifications.
			\end{proof}
		\subsection{Silting collections}
			For a collection of objects $\mathcal{X}$ in a triangulated category $\cat{D}$ we write $\mathcal{X}^{\perp_{>0}}=\{D\in\cat{D}\mid\Hom_\cat{D}(X,D[m])=0\;\forall m>0, X\in\mathcal{X}\}$, and analogously define $\mathcal{X}^{\perp_{<0}}$, $\mathcal{X}^{\perp_{\geq 0}}$, etc.
			Moreover, $\Kar_\cat{D}(\mathcal{X})$ denotes the full subcategory whose objects are the direct summands of finite coproducts of objects in $\mathcal{X}$.
			
			The following definition is based on \cite[Def.~4.1]{PsaroudakisVitoria}.
			There are other definitions of silting, see \cite[Ex.~4.2]{PsaroudakisVitoria} for an overview and comparison of different definitions.
			\begin{dfn}
				\label{siltingdef}
				A \emph{silting collection} in a triangulated category $\cat{D}$ is a (not necessarily finite) set $\silting$ of objects of $\cat{D}$ such that
				\begin{itemize}
					\item $\Kar_\cat{D}(\silting)$ is Krull--Schmidt,
					\item objects in $\silting$ are indecomposable and pairwise non-isomorphic,
					\item $t_\silting=(\silting^{\perp_{>0}},\silting^{\perp_{<0}})$ is a \textbf{t}-structure on $\cat{D}$, called the \emph{silting \textbf{t}-structure associated with $\silting$}.
				\end{itemize}
				We say that $\silting$ is \emph{finite} if it consists of finitely many objects.
				A silting collection consisting of compact objects (in a triangulated category with small coproducts) is called \emph{compact}.
			\end{dfn}
			\begin{rem}
				\label{siltinghomnegativity}
				In \cite[Def.~4.1]{PsaroudakisVitoria} it is moreover required that $\Hom_\cat{D}(P,P'[m])=0$ for all $P,P'\in\silting$ and $m>0$.
				However, as mentioned in \cite[Prop.~2.5]{AHLSV}, this assumption is automatic:
				for $P\in\silting$, take the \textbf{t}-decomposition $t_{\leq 0}P\to P\to t_{>0}P\to t_{\leq 0}P[1]$.
				Then $t_{>0}P\in\silting^{\perp_{\leq 0}}$, so $t_{\leq 0}P[1]\cong P[1]\oplus t_{>0}P$ and thus $P\cong t_{\leq 0}P\in\silting^{\perp_{>0}}$.
			\end{rem}
			In the literature usually silting objects (rather than silting collections) are used, see e.g.~\cite{KoenigYang,PsaroudakisVitoria}.
			However these provide exactly the same data, at least in the finite case:
			given a silting object $P$, (isomorphism representatives of) its indecomposable summands form a silting collection.
			Conversely, if $\silting$ is a silting collection, then $\coprod_{P\in\silting} P$ is a silting object (assuming the coproduct exists).
			We prefer to use silting collections rather than silting objects since we are mostly interested in the indecomposable summands.
			\begin{rem}
				It is important to specify the ambient triangulated category $\cat{D}$ for a silting collection $\silting$.
				Note that if $\silting$ is a silting collection in $\cat{D}$, then not necessarily $\cat{D}=\thick_\cat{D}(\silting)$.
				In particular, $\silting$ is in general not a silting collection in $\thick_\cat{D}(\silting)$, since the associated silting \textbf{t}-structure need not restrict to a \textbf{t}-structure on $\thick_\cat{D}(\silting)$.
			\end{rem}
			Using the terminology from \cite[\href{https://stacks.math.columbia.edu/tag/09SJ}{Tag~09SJ}]{StacksProject}, we say that a set of objects $\mathcal{X}$ \emph{weakly generates} $\cat{D}$ if $\Hom_\cat{D}(X,Y[n])=0$ for all $n\in\ints$ and $X\in\mathcal{X}$ implies $Y=0$.
			The following result is also stated in \cite[Prop.~4.3]{PsaroudakisVitoria}, however we were unable to verify their proof.
			\begin{lemma}
				\label{siltingtnondeg}
				A silting collection $\silting$ in $\cat{D}$ weakly generates $\cat{D}$.
				In particular, the associated silting \textbf{t}-structure $t_\silting$ is non-degenerate.
			\end{lemma}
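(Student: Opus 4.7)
The plan is to prove weak generation directly from the \textbf{t}-structure axioms, and then deduce non-degeneracy as a formal consequence. For weak generation, suppose $Y \in \cat{D}$ satisfies $\Hom_\cat{D}(P, Y[n]) = 0$ for all $n \in \ints$ and all $P \in \silting$. I would split the hypothesis according to the sign of $n$. The vanishing for $n < 0$ immediately places $Y \in \silting^{\perp_{<0}} = \cat{D}^{t\geq 0}$. Rewriting the vanishing for $n \geq 0$ as $\Hom_\cat{D}(P, Y[-1][m]) = 0$ for all $m > 0$, it places $Y[-1] \in \silting^{\perp_{>0}} = \cat{D}^{t\leq 0}$, i.e.\ $Y \in \cat{D}^{t\leq -1}$. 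The defining orthogonality $\cat{D}^{t\leq 0}\perp\cat{D}^{t\geq 0}[-1]$ shifts to $\cat{D}^{t\leq -1}\perp\cat{D}^{t\geq 0}$, and applying this to the identity of $Y$ yields $\id_Y = 0$, hence $Y = 0$.

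For the non-degeneracy statement, I would unwind the two aisles carefully. Using $\cat{D}^{t\leq n} = \cat{D}^{t\leq 0}[-n]$, membership in $\cat{D}^{t\leq n}$ is equivalent to $\Hom_\cat{D}(P, Y[k]) = 0$ for all $k > -n$ and all $P \in \silting$; similarly, $Y \in \cat{D}^{t\geq n}$ translates to $\Hom_\cat{D}(P, Y[k]) = 0$ for all $k < -n$. Intersecting either family of conditions over all $n \in \ints$ yields exactly $\Hom_\cat{D}(P, Y[k]) = 0$ for all $k \in \ints$ and $P \in \silting$, so both $\bigcap_{n\in\ints}\cat{D}^{t\leq n}$ and $\bigcap_{n\in\ints}\cat{D}^{t\geq n}$ coincide with this class. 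The weak generation just proved forces $Y = 0$, giving non-degeneracy.

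I do not foresee a substantive obstacle: the argument is essentially bookkeeping once one notices that the vanishing hypothesis places $Y$ into two mutually orthogonal pieces of the silting \textbf{t}-structure. The only delicate point is keeping track of the shift conventions when translating between the perpendicular notation $\silting^{\perp_{*}}$ and the aisle notation $\cat{D}^{t*}$; in particular that $\silting^{\perp_{\geq 0}} = \cat{D}^{t\leq -1}$, which is the key identification making the orthogonality argument go through.
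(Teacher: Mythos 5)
Your argument is correct and follows essentially the same strategy as the paper: split the vanishing hypothesis by the sign of the shift to locate $Y$ in both halves of the silting \textbf{t}-structure, then conclude $Y=0$. The only difference is in how you conclude once you have placed $Y$ in both pieces. You observe that the two pieces are Hom-orthogonal and apply that to $\id_Y$; the paper instead observes that $Y\to Y\to 0\to Y[1]$ and $0\to Y\to Y\to 0$ are both \textbf{t}-decompositions of $Y$ and invokes their uniqueness. These are equivalent in content (uniqueness of \textbf{t}-decompositions is itself proved from the orthogonality axiom), but your version is slightly more self-contained since it goes straight to the axiom. The paper places $Y$ in $\cat{D}^{t\leq 0}\cap\cat{D}^{t>0}$ rather than your $\cat{D}^{t\leq -1}\cap\cat{D}^{t\geq 0}$, which is merely a cosmetic difference.

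One small inaccuracy in your non-degeneracy paragraph: you write that $Y\in\cat{D}^{t\leq n}$ is equivalent to $\Hom_\cat{D}(P,Y[k])=0$ for all $k>-n$, but with the paper's conventions $\cat{D}^{t\leq n}=\cat{D}^{t\leq 0}[-n]=\silting^{\perp_{>n}}$, so the correct condition is $k>n$ (and likewise $k<n$ for $\cat{D}^{t\geq n}$, not $k<-n$). Since you then intersect over all $n\in\ints$, the sign error washes out and your final conclusion is unaffected; still worth correcting to keep the indexing consistent with the rest of the paper.
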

			\begin{proof}
				Let $X\in\cat{D}$ such that $\Hom_\cat{D}(P,X[n])=0$ for all $P\in\silting$ and $n\in\ints$.
				Then in particular $X\in\silting^{\perp_{>0}}$ and $X\in\silting^{\perp_{\leq 0}}$, and thus both $X\to X\to 0\to X[1]$ and $0\to X\to X\to 0$ are \textbf{t}-decompositions of $X$ with respect to $t_\silting$.
				But since \textbf{t}-decompositions are unique it follows that $X=0$.
				
				That $t_\silting$ is non-degenerate is equivalent to $\silting$ weakly generating $\cat{D}$ since $\cat{D}^{t_\silting\leq n}=\silting^{\perp_{>n}}$ for all $n\in\ints$, and analogously for the positive part.
			\end{proof}
			For us the main class of examples will be silting collections according to the following ``classical'' definition going back to \cite{KellerVossieck} and \cite[Def.~2.1]{AiharaIyama}.
			\begin{dfn}
				A \emph{classical silting collection} in a triangulated category $\cat{C}$ is a set $\silting$ of pairwise non-isomorphic objects of $\cat{C}$ such that
				\begin{itemize}
					\item $\Kar_\cat{C}(\silting)$ is Krull--Schmidt,
					\item objects in $\silting$ are indecomposable,
					\item $\Hom_\cat{C}(P,P'[m])=0$ for all $P,P'\in\silting$, $m>0$,
					\item $\cat{C}=\thick_\cat{C}(\silting)$.
				\end{itemize}
			\end{dfn}
			The only difference to \cref{siltingdef} is that silting collections by definition provide \textbf{t}-structures, while classical silting collections have to generate $\cat{C}$ as thick subcategory.
			
			The following lemma describes the relation between silting collections and classical silting collections in compactly generated triangulated categories.
			We write $\cat{D}^\comp$ for the full subcategory of compact objects of a triangulated category $\cat{D}$ with small coproducts.
			\begin{lemma}
				\label{compactsilting}
				Let $\cat{D}$ be a compactly generated triangulated category.
				Then a set of objects $\silting$ is a compact silting collection in $\cat{D}$ if and only if $\silting$ is a classical silting collection in $\cat{D}^\comp$.
			\end{lemma}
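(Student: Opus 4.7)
I would structure the proof by verifying the remaining axioms in each direction, since Karoubi closure (and its Krull--Schmidt property), indecomposability, and pairwise non-isomorphism all transfer trivially between $\cat{D}$ and $\cat{D}^\comp$ because the latter is closed under summands.

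For the forward direction, suppose $\silting$ is a compact silting collection in $\cat{D}$. The Hom-vanishing $\Hom_\cat{D}(P,P'[m]) = 0$ for $m > 0$ and $P, P' \in \silting$ is exactly the content of \cref{siltinghomnegativity}. The only substantive step is the thick generation $\thick_{\cat{D}^\comp}(\silting) = \cat{D}^\comp$. For this I would combine \cref{siltingtnondeg} (which gives that $\silting$ weakly generates $\cat{D}$) with Neeman's theorem on Thomason localization for compactly generated triangulated categories: the smallest localizing subcategory containing $\silting$ equals $\cat{D}$, and its intersection with $\cat{D}^\comp$ is precisely $\thick_{\cat{D}^\comp}(\silting)$.

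For the backward direction, suppose $\silting$ is a classical silting collection in $\cat{D}^\comp$. Compactness is automatic, so the substantive task is to show that $t_\silting = (\silting^{\perp_{>0}}, \silting^{\perp_{<0}})$ is a \textbf{t}-structure on $\cat{D}$. I would invoke the general existence theorem for \textbf{t}-structures generated by sets of compact objects in compactly generated triangulated categories (due to Alonso Tarrío--Jeremías López--Souto Salorio, or in the dg setting Keller--Nicolás): the smallest full subcategory $\cat{U}$ of $\cat{D}$ closed under $[1]$, coproducts, and extensions, and containing $\silting$, is the aisle of a \textbf{t}-structure. The classical silting Hom-vanishing gives $\silting \subseteq \silting^{\perp_{>0}}$, and the latter is closed under the three aisle-operations (coproduct closure uses compactness of $\silting$), so $\cat{U} \subseteq \silting^{\perp_{>0}}$. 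For the reverse inclusion I would take $D \in \silting^{\perp_{>0}}$, form its \textbf{t}-decomposition $U \to D \to V \to U[1]$ with $U \in \cat{U}$ and $V \in \cat{U}^\perp$, and show $V$ lies in the full right orthogonal $\silting^\perp$ across all shifts, using that $\cat{U}$ contains every non-negative shift of $\silting$. Since $\thick_{\cat{D}^\comp}(\silting) = \cat{D}^\comp$ forces (again by Neeman) that $\silting$ weakly generates $\cat{D}$, we get $V = 0$, hence $D \cong U \in \cat{U}$. The coaisle $\cat{U}^\perp[1]$ then matches $\silting^{\perp_{<0}}$ by unraveling $\cat{U} = \silting^{\perp_{>0}}$ together with the closure of $\cat{U}$ under $[1]$.

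The main obstacle is the backward direction, specifically identifying the abstractly generated aisle $\cat{U}$ with the explicit orthogonal subcategory $\silting^{\perp_{>0}}$. The inclusion $\cat{U} \subseteq \silting^{\perp_{>0}}$ is a formal closure argument, but the reverse requires the \textbf{t}-decomposition analysis above; matching the coaisle is then a one-line orthogonality computation. The crucial bridge between the two sides of the equivalence is the interplay between classical silting Hom-vanishing, compactness, and the translation between thick and localizing generation provided by Neeman.
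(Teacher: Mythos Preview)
Your proposal is correct and follows essentially the same route as the paper. The forward direction is identical (weak generation via \cref{siltingtnondeg}, then Neeman/Krause to get $\thick_\cat{D}(\silting)=\cat{D}^\comp$); for the backward direction the paper simply cites \cite[Cor.~4.7]{AiharaIyama}, whereas you unpack that citation by invoking the aisle-generation theorem of Alonso Tarr\'io--Jerem\'ias L\'opez--Souto Salorio and then identifying the generated aisle with $\silting^{\perp_{>0}}$ by hand---this is exactly the content of the cited result, so the difference is one of packaging rather than strategy.
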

			\begin{proof}
				By \cite[Cor.~4.7]{AiharaIyama} a classical silting collection $\silting$ in $\cat{D}^\comp$ provides a \textbf{t}-structure $t_\silting=(\silting^{\perp_{>0}},\silting^{\perp_{<0}})$ on $\cat{D}$ and hence is a silting collection in $\cat{D}$.
				
				Conversely, if $\silting$ is a compact silting collection in $\cat{D}$, then $\silting$ weakly generates $\cat{D}$ by \cref{siltingtnondeg}, and it follows from general facts (see e.g.~\cite[Prop.~3.4.15]{KrauseHomologicalTheory}) that $\thick_\cat{D}(\silting)=\cat{D}^\comp$.
				Thus $\silting$ is a classical silting collection in $\cat{D}^\comp$.
			\end{proof}
			One often considers classical silting collections in $\Kb(\rprojfg A)$ for a finite-dimensional algebra $A$.
			For instance \cite{KoenigYang} describes the relation of classical silting collections in $\Kb(\rprojfg A)$ to \textbf{t}-structures on $\Db(\rmodfd A)$.
			We would like to rephrase these results using silting collections instead of classical silting collections.
			However, we can't apply \cref{compactsilting} directly, as $\Db(\rmodfd A)$ is not compactly generated since it does not have small coproducts.
			\begin{prop}
				\label{siltingforfdalg}
				Let $A$ be a finite-dimensional algebra and $\silting$ be a set of objects of $\cat{D}=\D(\rMod A)$.
				Then the following are equivalent:
				\begin{tfae}
					\item $\silting$ is a classical silting collection in $\Kb(\rprojfg A)$.
					\item $\silting$ is a compact silting collection in $\D(\rMod A)$.
					\item $\silting$ is a silting collection in $\Db(\rmodfd A)$ and $t_\silting$ is a bounded \textbf{t}-structure on $\Db(\rmodfd A)$.
					\item $\silting$ is a silting collection in $\Db(\rmodfd A)$ and $\thick_\cat{D}(\silting)=\Kb(\rprojfg A)$.
				\end{tfae}
			\end{prop}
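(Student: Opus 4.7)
The equivalence \textit{I)} $\Leftrightarrow$ \textit{II)} is immediate from \cref{compactsilting} together with the standard identification $\D(\rMod A)^\comp = \Kb(\rprojfg A)$ for a finite-dimensional algebra $A$. Thus the bulk of the proof concerns the remaining equivalences.

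For those I would rely on the K\"onig--Yang correspondence \cite{KoenigYang}, which for a finite-dimensional algebra $A$ establishes a bijection between classical silting collections in $\Kb(\rprojfg A)$ and bounded \textbf{t}-structures on $\Db(\rmodfd A)$ with finite-length heart; concretely, this bijection sends a classical silting collection $\silting$ to the pair $(\silting^{\perp_{>0}}, \silting^{\perp_{<0}})$ with orthogonals computed inside $\Db(\rmodfd A)$. Assuming \textit{I)}, the correspondence yields a bounded \textbf{t}-structure on $\Db(\rmodfd A)$ that by construction coincides with $t_\silting$, while the remaining silting-collection axioms (Krull--Schmidt, indecomposability, pairwise non-isomorphism) transfer verbatim from $\Kb(\rprojfg A)$ to $\Db(\rmodfd A)$, giving \textit{III)}. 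Conversely, \textit{III)} $\Rightarrow$ \textit{I)} is the reverse direction of the same bijection.

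The loop through \textit{IV)} is then essentially bookkeeping. For \textit{I)} $\Rightarrow$ \textit{IV)}: classical silting in $\Kb(\rprojfg A)$ means $\thick_{\Kb(\rprojfg A)}(\silting) = \Kb(\rprojfg A)$, and since $\Kb(\rprojfg A)$ is thick in $\cat{D}$ this is the same as $\thick_\cat{D}(\silting) = \Kb(\rprojfg A)$; combined with \textit{I)} $\Rightarrow$ \textit{III)} this yields \textit{IV)}. For \textit{IV)} $\Rightarrow$ \textit{I)}: we have $\silting \subseteq \thick_\cat{D}(\silting) = \Kb(\rprojfg A)$, Krull--Schmidt, indecomposability and pairwise non-isomorphism are inherited from the silting-in-$\Db(\rmodfd A)$ structure, the vanishing of $\Hom_\cat{D}(P,P'[m])$ for $m>0$ is provided by \cref{siltinghomnegativity}, and the thick generation of $\Kb(\rprojfg A)$ is exactly the hypothesis.

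The main anticipated obstacle is checking that K\"onig--Yang's construction of the bounded \textbf{t}-structure on $\Db(\rmodfd A)$ really matches our $t_\silting$ from \cref{siltingdef}: their statement is phrased in terms of silting \emph{objects} and the aisles are presented slightly differently, so a careful translation is needed. Once the Hom-vanishing from \cref{siltinghomnegativity} is invoked this comparison becomes routine, and the rest of the argument is a formal manipulation of thick and Karoubi closures across the chain $\Kb(\rprojfg A)\subseteq\Db(\rmodfd A)\subseteq\D(\rMod A)$.
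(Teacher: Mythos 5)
Your handling of \textit{I)}$\iff$\textit{II)} matches the paper's proof exactly, and your argument for \textit{I)}$\Rightarrow$\textit{III)}, \textit{I)}$\Rightarrow$\textit{IV)}, and \textit{IV)}$\Rightarrow$\textit{I)} is sound. The gap is in \textit{III)}$\Rightarrow$\textit{I)}, which you dismiss as ``the reverse direction of the same bijection.'' The K\"onig--Yang inverse bijection takes a bounded \textbf{t}-structure with finite-length heart and produces \emph{some} classical silting collection in $\Kb(\rprojfg A)$; it does not a priori tell you that this coincides with $\silting$, and more fundamentally it does not tell you that $\silting$ even lies in $\Kb(\rprojfg A)$ to begin with. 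A silting collection in $\Db(\rmodfd A)$ in the sense of \cref{siltingdef} is just a set of objects of $\Db(\rmodfd A)$ with certain properties of $(\silting^{\perp_{>0}},\silting^{\perp_{<0}})$ --- nothing forces it to consist of perfect complexes.

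This containment is exactly what the paper proves in \textit{III)}$\Rightarrow$\textit{IV)}: boundedness of $t_\silting$ forces $\Hom_{\Db(\rmodfd A)}(P,L[m])=0$ for $|m|\gg 0$ and $L$ a simple $A$-module, which means each $P\in\silting$ has finite projective dimension, i.e.\ $\silting\subseteq\Kb(\rprojfg A)$; and then $\thick_\cat{D}(\silting)=\Kb(\rprojfg A)$ requires a separate argument (the paper cites the proof of \cite[Cor.~6.9]{AdachiMizunoYang}). Your ``anticipated obstacle'' paragraph is aimed at the comparison of aisles in \textit{I)}$\Rightarrow$\textit{III)}, which is the easier direction; the real obstacle you have not addressed is placing $\silting$ inside $\Kb(\rprojfg A)$ when starting from \textit{III)}. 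Once you fix this --- by proving \textit{III)}$\Rightarrow$\textit{IV)} along the lines above rather than \textit{III)}$\Rightarrow$\textit{I)} directly, and then closing the loop via your already-correct \textit{IV)}$\Rightarrow$\textit{I)} --- your argument becomes essentially the paper's proof, just ordered a little differently.
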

			\begin{proof}
				I)$\iff$II): It is well-known that $\cat{D}=\D(\rMod A)$ is compactly generated, and $\cat{D}^\comp=\Kb(\rprojfg A)$.
				Thus by \cref{compactsilting} classical silting collections in $\Kb(\rprojfg A)$ are the same as compact silting collections in $\D(\rMod A)$.
				
				I)$\imp$III): It follows from (the proof of) \cite[Lemma~5.3]{KoenigYang} that classical silting collections $\silting$ in $\Kb(\rprojfg A)$ are silting collections $\silting$ in $\Db(\rmodfd A)$, and that $t_\silting$ is bounded.
				
				III)$\imp$IV): Let $L$ be a simple $A$-module.
				If $t_\silting$ is bounded, then for $P\in\silting$ we have $\Hom_{\Db(\rmodfd A)}(P,L[m])=0$ for $m\gg 0$ or $m\ll 0$, which implies $\silting\subseteq\Kb(\rprojfg A)$.
				It then follows from the proof of \cite[Cor.~6.9]{AdachiMizunoYang} that $\thick_\cat{D}(\silting)=\Kb(\rprojfg A)$.
					
				IV)$\imp$I): This is immediate from \cref{siltinghomnegativity}.
			\end{proof}
			\begin{rem}
				It seems very likely that every silting \textbf{t}-structure on $\Db(\rmodfd A)$ is bounded, or that (equivalently) every silting collection of $\Db(\rmodfd A)$ lies in $\Kb(\rprojfg A)$.
				If this is the case, then both III) and IV) in \cref{siltingforfdalg} reduce to $\silting$ being a silting collection in $\Db(\rmodfd A)$.
			\end{rem}
			The definition of classical silting collections is reminiscent of the properties of indecomposable objects in the coheart of a weight structure.
			Indeed, this is not a coincidence.
			Using silting collections instead of classical silting collections, we obtain:
			\begin{prop}
				\label{weighttosilting}
				Let $\cat{C}\subseteq\cat{D}$ be a thick subcategory of a triangulated category.
				\begin{enumerate}
					\item Let $\silting$ be a silting collection in $\cat{D}$ such that $\thick_\cat{D}(\silting)=\cat{C}$.
						Then $w=(\cat{C}_{w\leq 0},\cat{C}_{w\geq 0})$ with
						\begin{align*}
							\cat{C}_{w\leq 0}&=\Kar_\cat{C}\extclos_\cat{C}\{P[m]\mid P\in\silting, m\geq 0\},\\
							\cat{C}_{w\geq 0}&=\Kar_\cat{C}\extclos_\cat{C}\{P[m]\mid P\in\silting, m\leq 0\}
						\end{align*}
						is a bounded weight structure on $\cat{C}$, and $\silting$ is a full set of isomorphism representatives of the indecomposable objects in $\coheart_w$.
					\item Let $w$ be a bounded weight structure on $\cat{C}$ such that $(\coheart_w^{\perp_{>0}},\coheart_w^{\perp_{<0}})$ is a \textbf{t}-structure on $\cat{D}$ and $\coheart_w$ is Krull--Schmidt.
						Then a full set $\silting$ of isomorphism representatives of the indecomposable objects in $\coheart_w$ is a silting collection in $\cat{D}$.
				\end{enumerate}
			\end{prop}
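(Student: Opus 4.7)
\emph{Plan.} The two parts are essentially inverse constructions relating silting collections in $\cat{D}$ with bounded weight structures on $\cat{C}$. I treat them separately.

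For part (1), I begin with the observation that by \cref{siltinghomnegativity}, $\silting$ satisfies $\Hom_\cat{D}(P, P'[m]) = 0$ for all $P, P' \in \silting$ and $m > 0$. Since $\cat{C} \subseteq \cat{D}$ is a fully faithful inclusion, the same holds in $\cat{C}$. I then verify the weight structure axioms for the pair $(\cat{C}_{w\leq 0}, \cat{C}_{w\geq 0})$: Karoubi closure and the shift conditions are immediate from the definitions. For orthogonality $\cat{C}_{w\geq 0}[-1] \perp \cat{C}_{w\leq 0}$ it suffices (since Hom-vanishing is preserved by extensions and direct summands) to check it on the generators, where it reduces to $\Hom_\cat{C}(P[m-1], P'[m']) = 0$ for $P, P' \in \silting$, $m \leq 0$, $m' \geq 0$; this amounts to $\Hom_\cat{C}(P, P'[m'-m+1])$ with $m' - m + 1 \geq 1$, which vanishes by the above. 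For existence of weight decompositions I use the standard argument: the class of objects admitting a weight decomposition with respect to the given pair is closed under shifts, triangles and direct summands (using the octahedral axiom together with the orthogonality just established), and it contains $\silting$; since $\cat{C} = \thick_\cat{D}(\silting)$, every object in $\cat{C}$ admits a weight decomposition. Boundedness follows because weight decompositions built from finitely many shifts of $\silting$ have bounded range. Finally, by orthogonality $\coheart_w = \cat{C}_{w\leq 0} \cap \cat{C}_{w\geq 0} = \Kar_\cat{C}(\mathrm{add}(\silting))$, and since $\Kar_\cat{D}(\silting)$ is Krull--Schmidt by the silting hypothesis, its indecomposables (up to isomorphism) are precisely the elements of $\silting$.

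For part (2), I verify the three axioms of a silting collection. Objects of $\silting$ are indecomposable and pairwise non-isomorphic by construction. For Krull--Schmidtness of $\Kar_\cat{D}(\silting)$: since $\coheart_w$ is Karoubi-closed and closed under finite direct sums in $\cat{D}$ (being the intersection of the two Karoubi-closed components of $w$), we have $\Kar_\cat{D}(\silting) \subseteq \coheart_w$, and since $\coheart_w$ is Krull--Schmidt by hypothesis, so is $\Kar_\cat{D}(\silting)$. For the \textbf{t}-structure condition: Krull--Schmidtness of $\coheart_w$ together with $\silting$ being a set of representatives of its indecomposables shows that every object of $\coheart_w$ is isomorphic to a finite direct sum of elements of $\silting$. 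Since Hom-vanishing is preserved by finite direct sums and summands, we deduce $\silting^{\perp_{>0}} = \coheart_w^{\perp_{>0}}$ and $\silting^{\perp_{<0}} = \coheart_w^{\perp_{<0}}$. Thus $t_\silting$ coincides with the \textbf{t}-structure $(\coheart_w^{\perp_{>0}}, \coheart_w^{\perp_{<0}})$ given by hypothesis, proving that $\silting$ is a silting collection in $\cat{D}$.

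\emph{Main obstacle.} The technically most delicate step is the existence of weight decompositions in part (1). The argument hinges on showing that the class of objects admitting a weight decomposition is closed under triangles and direct summands, which relies on a careful octahedral-axiom construction and on the orthogonality $\cat{C}_{w>0} \perp \cat{C}_{w\leq 0}$ to ensure that the glued decomposition lies in the correct parts. Once this closure property is established, the conclusion is immediate from $\cat{C} = \thick_\cat{D}(\silting)$.
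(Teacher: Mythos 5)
The paper dispatches part (1) by citing Bondarko's Theorem~4.3.2, whereas you attempt to re-derive it. Your sketch of part (1) is mostly sound (Karoubi closure and shifts are trivial, orthogonality reduces to generators, extensions via the octahedral axiom), but it has a genuine gap at precisely the step that makes Bondarko's theorem non-trivial: you assert that the class of objects admitting a weight decomposition ``is closed under \dots direct summands (using the octahedral axiom together with the orthogonality just established)''. This is not a consequence of the octahedral axiom. Given a weight decomposition $A \to X \oplus Y \to B$ with $A\in\cat{C}_{w>0}$ and $B\in\cat{C}_{w\leq 0}$, the natural candidate $A \to X\oplus Y \to X$, completed to a triangle $A\to X\to C$, fits (by octahedron) into a triangle $Y\to B\to C$; but to conclude $C\in\cat{C}_{w\leq 0}$ one would need $Y[1]\in\cat{C}_{w\leq 0}$, which is not known. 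Since weight truncations are not functorial, one cannot ``split'' a weight decomposition along a direct sum decomposition, and the extension of a weight structure to an idempotent completion is a non-trivial theorem in its own right (this is exactly the content that makes the reference to Bondarko necessary, and is treated carefully in, e.g., Bondarko--Sosnilo's work on extending weight structures to idempotent completions). So for part (1) you should either reproduce or cite that argument rather than attributing the closure to the octahedral axiom.

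Your part (2) is correct and is the same argument as in the paper (where it is stated very briefly): Krull--Schmidtness of $\coheart_w$ gives $\coheart_w = \mathrm{add}(\silting)$ up to direct summands, hence $\silting^{\perp_{>0}} = \coheart_w^{\perp_{>0}}$ and $\silting^{\perp_{<0}} = \coheart_w^{\perp_{<0}}$, so the hypothesized \textbf{t}-structure is exactly $t_\silting$; the remaining axioms of \cref{siltingdef} are immediate.
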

			\begin{proof}
				The first part is \cite[Thm.~4.3.2]{Bondarko}.
				For the second part, note that since $\coheart_w$ is Krull--Schmidt, $(\silting^{\perp_{>0}},\silting^{\perp_{<0}})=(\coheart_w^{\perp_{>0}},\coheart_w^{\perp_{>0}})$ is a \textbf{t}-structure on $\cat{D}$, and the remaining axioms from \cref{siltingdef} are clear.\qedhere
			\end{proof}
			\begin{rem}
				\label{weighttoclassicalsilting}
				Under the bijection from \cref{weighttosilting}, finite silting collections correspond to weight structures such that the coheart contains finitely many indecomposable objects (up to isomorphism).
				Moreover, \cref{weighttosilting} remains valid if one uses classical silting collections instead of silting collections and leaves out the assumption that the coheart defines a \textbf{t}-structure.
				This version is commonly used, for instance it occurs in \cite{KoenigYang}.
			\end{rem}
			In the setup of \cref{weighttosilting} we would like to know when the coheart of a bounded weight structure on a thick subcategory $\cat{C}\subseteq\cat{D}$ weakly generates $\cat{D}$.
			The following criterion is proved analogously to \cref{siltingtnondeg}.
			\begin{cor}
				Let $\cat{C}\subseteq\cat{D}$ be a thick subcategory of a triangulated category and $w$ be a bounded weight structure on $\cat{C}$.
				If $(\coheart_w^{\perp_{>0}},\coheart_w^{\perp_{<0}})$ defines a \textbf{t}-structure on $\cat{D}$, then $\coheart_w$ weakly generates $\cat{D}$.
			\end{cor}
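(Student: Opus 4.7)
The plan is to adapt the proof of \cref{siltingtnondeg} essentially verbatim, with the coheart $\coheart_w$ playing the role of the silting collection $\silting$. The hypothesis that $(\coheart_w^{\perp_{>0}},\coheart_w^{\perp_{<0}})$ defines a \textbf{t}-structure on $\cat{D}$ provides exactly the structural input needed, namely uniqueness of \textbf{t}-decompositions with respect to this \textbf{t}-structure. Boundedness of $w$ is not actually used in the argument, in keeping with the fact that no boundedness hypothesis on $t_\silting$ appears in \cref{siltingtnondeg}.

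Concretely, I would fix $X\in\cat{D}$ satisfying $\Hom_\cat{D}(Y,X[n])=0$ for all $Y\in\coheart_w$ and all $n\in\ints$, and aim to show $X=0$. This hypothesis places $X$ simultaneously in $\coheart_w^{\perp_{>0}}$ (the aisle of the given \textbf{t}-structure) and in $\coheart_w^{\perp_{\leq 0}}$; unpacking the latter via a shift yields $X[1]\in\coheart_w^{\perp_{<0}}$, so that $X$ also lies in the shifted co-aisle $\coheart_w^{\perp_{<0}}[-1]$.

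Consequently both of the trivial triangles $X\to X\to 0\to X[1]$ and $0\to X\to X\to 0$ qualify as \textbf{t}-decompositions of $X$. By uniqueness of \textbf{t}-decompositions, their positive parts must coincide, which forces $X\cong 0$. Since this is the defining property of weak generation, it follows that $\coheart_w$ weakly generates $\cat{D}$.

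There is essentially no obstacle; the only detail requiring care is the bookkeeping of shift conventions ensuring that $\coheart_w^{\perp_{\leq 0}}$ corresponds, after a single shift, to the co-aisle $\coheart_w^{\perp_{<0}}$ of the \textbf{t}-structure. This is purely formal and exactly parallels the corresponding step in the proof of \cref{siltingtnondeg}.
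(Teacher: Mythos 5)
Your proposal is correct and is precisely the argument the paper intends: the paper's own text simply says the corollary "is proved analogously to \cref{siltingtnondeg}," and you have carried out that analogy faithfully, including the shift bookkeeping identifying $\coheart_w^{\perp_{\leq 0}}$ with $\coheart_w^{\perp_{<0}}[-1]$. Your observation that boundedness of $w$ plays no role in this particular argument is also accurate.
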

		\subsection{Adjacency and orthogonality}
			By definition, a silting collection $\silting$ in a triangulated category $\cat{D}$ defines a \textbf{t}-structure $t=(\silting^{\perp_{>0}},\silting^{\perp_{<0}})$ on $\cat{D}$.
			On the other hand, by \cref{weighttosilting} $\silting$ also defines a weight structure $w$ on $\cat{C}=\thick_\cat{D}(\silting)$.
			From the definition of $t$ it is clear that $\cat{D}^{t\leq 0}=(\cat{C}_{w>0})^\perp$ and $\cat{D}^{t\geq 0}=(\cat{C}_{w<0})^\perp$.
			If moreover $\cat{C}=\cat{D}$, then even $\cat{D}^{t\leq 0}=\cat{C}_{w\leq 0}$.
			These relations are described, and generalized by, the notions of orthogonality and adjacency between weight structures and \textbf{t}-structures.
			
			Let $\cat{C}$ and $\cat{D}$ be triangulated categories and $\cat{A}$ an abelian category.
			Following \cite[Def.~5.2.1]{BondarkoWeightTAndBack}, by \emph{duality} we mean a biadditive bifunctor $\Phi\: \cat{C}\times\cat{D}\to\cat{A}$ which is contravariant and cohomological in the first argument, covariant and homological in the second argument, and comes with a natural isomorphism $\Phi(-,-)\cong\Phi(-[1],-[1])$.
			
			Most of the time both $\cat{C}$ and $\cat{D}$ will be subcategories of a triangulated category $\cat{T}$ and $\Phi=\Hom_\cat{T}(-,-)\: \cat{C}\times\cat{D}\to\rMod\groundfield$.
			For sets of objects $\mathcal{X}\subseteq\cat{C}$ and $\mathcal{Y}\subseteq\cat{D}$ we write $\mathcal{X}\perp_\Phi\mathcal{Y}$ if $\Phi(X,Y)=0$ for all $X\in\mathcal{X}$ and $Y\in\mathcal{Y}$, and we define
			\begin{align*}
				\mathcal{X}^{\perp_\Phi}&=\{Y\in\cat{D}\mid \Phi(X,Y)=0\;\forall X\in\mathcal{X}\},&
				\prescript{\perp_\Phi}{}{\mathcal{Y}}&=\{X\in\cat{C}\mid \Phi(X,Y)=0\;\forall Y\in\mathcal{Y}\}.
			\end{align*}
			The following definition is based on \cite[Def.~5.2.1]{BondarkoWeightTAndBack}.
			\begin{dfn}
				Let $w$ be a weight structure on $\cat{C}$ and $t$ a \textbf{t}-structure on $\cat{D}$.
				\begin{itemize}
					\item $w$ is \emph{left orthogonal (with respect to $\Phi$)} to $t$ if $\cat{C}_{w\geq 0}\perp_\Phi\cat{D}^{t<0}$ and $\cat{C}_{w\leq 0}\perp_\Phi\cat{D}^{t>0}$.
					\item The orthogonality is \emph{\textbf{w}-strict} if $\cat{C}_{w\geq 0}=\prescript{\perp_\Phi}{}{(\cat{D}^{t<0})}$ and $\cat{C}_{w\leq 0}=\prescript{\perp_\Phi}{}{(\cat{D}^{t>0})}$.
					\item The orthogonality is \emph{\textbf{t}-strict} if $\cat{D}^{t<0}=(\cat{C}_{w\geq 0})^{\perp_\Phi}$ and $\cat{D}^{t>0}=(\cat{C}_{w\leq 0})^{\perp_\Phi}$.
					\item The orthogonality is \emph{\textbf{w}-\textbf{t}-strict} if it is both \textbf{w}-strict and \textbf{t}-strict.
				\end{itemize}
			\end{dfn}
			If both $\cat{C}$ and $\cat{D}$ are subcategories of a triangulated category $\cat{T}$, then any orthogonality will be with respect to $\Phi=\Hom_\cat{T}(-,-)$ unless explicitly mentioned.
			If moreover $\cat{C}=\cat{D}$, then left orthogonality is also called \emph{left adjacency}.
			
			In \cite{BondarkoWeightTAndBack} only orthogonality and \textbf{t}-strict orthogonality are considered, and there \textbf{t}-strict orthogonality is just called strict orthogonality.
			\begin{rem}
				Note that \cref{weighttosilting} establishes a bijection between silting collections and bounded weight structures that are \textbf{t}-strictly left orthogonal to a \textbf{t}-structure.
			\end{rem}
			If $\cat{C}\subseteq\cat{D}$, then it is possible to characterize left orthogonality in terms of the negative and positive part, and moreover orthogonality and \textbf{w}-strict orthogonality coincide.
			The non-obvious implication I)$\imp$III) of the following statement is already shown in \cite[Prop.~5.2.3]{BondarkoWeightTAndBack}.
			\begin{lemma}
				\label{orthogonalitylemma}
				Let $\cat{C}\subseteq\cat{D}$ be a thick subcategory, $w$ a weight structure on $\cat{C}$ and $t$ a \textbf{t}-structure on $\cat{D}$.
				Then the following are equivalent:
				\begin{tfae}
					\item\label{leftorth} $w$ is left orthogonal to $t$.
					\item\label{strictleftorth} $w$ is \textbf{w}-strictly left orthogonal to $t$.
					\item\label{intersection} $\cat{C}_{w\leq 0}=\cat{D}^{t\leq 0}\cap\cat{C}$ and $\cat{C}_{w\geq 0}=\prescript{\perp}{}{(\cat{D}^{t<0})}\cap\cat{C}$.
				\end{tfae}
			\end{lemma}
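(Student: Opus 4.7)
My plan is to prove the chain II)$\imp$I), II)$\iff$III), I)$\imp$III). The first implication is immediate from the definitions, and II)$\iff$III) is a literal rephrasing using the standard identity $\cat{D}^{t\leq 0}=\prescript{\perp}{}{(\cat{D}^{t>0})}$ (so that $\prescript{\perp}{}{(\cat{D}^{t>0})}\cap\cat{C}=\cat{D}^{t\leq 0}\cap\cat{C}$), while the two second halves are already literally identical. The real content is I)$\imp$III).

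For I)$\imp$III), orthogonality immediately gives the inclusions $\cat{C}_{w\leq 0}\subseteq\cat{D}^{t\leq 0}\cap\cat{C}$ and $\cat{C}_{w\geq 0}\subseteq\prescript{\perp}{}{(\cat{D}^{t<0})}\cap\cat{C}$; shifting these produces the two auxiliary facts $\cat{C}_{w\geq 1}\perp\cat{D}^{t\leq 0}$ (from $\cat{C}_{w\geq 0}\perp\cat{D}^{t<0}$ by $[-1]$) and $\cat{C}_{w\leq -1}\subseteq\cat{D}^{t<0}$ (from $\cat{C}_{w\leq 0}\subseteq\cat{D}^{t\leq 0}$ by $[1]$). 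The strategy for the reverse inclusions is uniform: one of the shifted perpendicularities kills the ``wrong'' weight piece in a suitable $\Hom$ long exact sequence, yielding a splitting of the corresponding structure map that presents $X$ as a direct summand of the ``correct'' weight piece, and Karoubi-closedness of the weight subcategories then concludes.

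For the reverse inclusion of the first equality, fix $X\in\cat{D}^{t\leq 0}\cap\cat{C}$ and a weight decomposition $w_{>0}X\to X\to w_{\leq 0}X\to w_{>0}X[1]$. Since $w_{>0}X\in\cat{C}_{w\geq 1}$, the first auxiliary perpendicularity gives $\Hom(w_{>0}X,X)=0$, so the $\Hom(-,X)$ long exact sequence of the triangle reads $\Hom(w_{\leq 0}X,X)\to\End(X)\to\Hom(w_{>0}X,X)=0$. Hence $\id_X$ lifts to a morphism $s\colon w_{\leq 0}X\to X$ with $s\circ(X\to w_{\leq 0}X)=\id_X$, exhibiting $X$ as a direct summand of $w_{\leq 0}X\in\cat{C}_{w\leq 0}$, and by Karoubi-closedness $X\in\cat{C}_{w\leq 0}$.

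For the reverse inclusion of the second equality, fix $X\in\prescript{\perp}{}{(\cat{D}^{t<0})}\cap\cat{C}$ and a weight decomposition $w_{\geq 0}X\to X\to w_{\leq -1}X\to w_{\geq 0}X[1]$. The second auxiliary inclusion places $w_{\leq -1}X$ in $\cat{D}^{t<0}$, and the defining property of $X$ then gives $\Hom(X,w_{\leq -1}X)=0$. The $\Hom(X,-)$ long exact sequence reads $\Hom(X,w_{\geq 0}X)\to\End(X)\to\Hom(X,w_{\leq -1}X)=0$, so $\id_X$ lifts to $s\colon X\to w_{\geq 0}X$ with $(w_{\geq 0}X\to X)\circ s=\id_X$, exhibiting $X$ as a retract of $w_{\geq 0}X\in\cat{C}_{w\geq 0}$; Karoubi-closedness then gives $X\in\cat{C}_{w\geq 0}$. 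The only subtle point is the bookkeeping of shifts, so that the ``wrong'' weight pieces land in the right $\cat{D}^{t\leq 0}$ or $\cat{D}^{t<0}$ region before the retract trick is invoked; once that is in place the two arguments are completely symmetric.
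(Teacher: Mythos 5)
Your proof is correct and takes essentially the same approach as the paper: the content of I)$\imp$III) is reduced to the standard orthogonality identities $\cat{C}_{w\leq 0}=(\cat{C}_{w>0})^\perp$ and $\cat{C}_{w\geq 0}=\prescript{\perp}{}{(\cat{C}_{w<0})}$ inside $\cat{C}$, which you re-derive via explicit (shifted) weight decompositions and the retract trick, whereas the paper simply invokes them. The only cosmetic difference is that the paper deduces the $\cat{C}_{w\geq 0}$ half of III) from the already-proved $\cat{C}_{w\leq 0}$ half by the short chain $\prescript{\perp}{}{(\cat{D}^{t<0})}\cap\cat{C}\subseteq\prescript{\perp}{}{(\cat{D}^{t<0}\cap\cat{C})}=\prescript{\perp}{}{(\cat{C}_{w<0})}=\cat{C}_{w\geq 0}$, whereas you run a second, symmetric retract argument.
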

			\begin{proof}
				\cref{strictleftorth}$\imp$\cref{leftorth} is trivial.

				\cref{leftorth}$\imp$\cref{intersection}: From $\cat{C}_{w\leq 0}\perp\cat{D}^{t>0}$ it is clear that $\cat{C}_{w\leq 0}\subseteq \prescript{\perp}{}{(\cat{D}^{t>0})}\cap\cat{C}=\cat{D}^{t\leq 0}\cap\cat{C}$.
				The converse inclusion follows from $\cat{C}_{w>0}\perp\cat{D}^{t\leq 0}$ and $\cat{C}_{w\leq 0}=(\cat{C}_{w>0})^\perp$.
				
				For $\cat{C}_{w\geq 0}$, we have by assumption $\cat{C}_{w\geq 0}\subseteq \prescript{\perp}{}{(\cat{D}^{t<0})}\cap\cat{C}$, and from $\cat{C}_{w\leq 0}=\cat{D}^{t\leq 0}\cap\cat{C}$ we get $\prescript{\perp}{}{(\cat{D}^{t<0})}\cap\cat{C}\subseteq\prescript{\perp}{}(\cat{D}^{t<0}\cap\cat{C})=\prescript{\perp}{}{(\cat{C}_{w<0})}=\cat{C}_{w\geq 0}$.
				
				\cref{intersection}$\imp$\cref{strictleftorth}: This is obvious from $\cat{D}^{t\leq 0}=\prescript{\perp}{}{(\cat{D}^{t>0})}$ and the assumptions.
			\end{proof}
			\begin{cor}
				\label{siltingstrictlyorthogonal}
				Let $\silting$ be a silting collection in $\cat{D}$, $t$ its associated silting \textbf{t}-structure and $w$ the induced weight structure on $\thick_\cat{D}(\silting)$.
				Then $w$ is \textbf{w}-\textbf{t}-strictly left orthogonal to $t$.
			\end{cor}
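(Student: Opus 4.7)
The plan is to leverage \cref{orthogonalitylemma} to split the claim into two tasks: (i) verify plain left orthogonality, which is then automatically upgraded to \textbf{w}-strict orthogonality, and (ii) verify the two \textbf{t}-strict equalities directly. After unwinding the shift conventions, $\cat{D}^{t_\silting>0}=\silting^{\perp_{\leq 0}}$ and $\cat{D}^{t_\silting<0}=\silting^{\perp_{\geq 0}}$, so every Hom-vanishing condition we need becomes an assertion about Hom-vanishing between shifts of $\silting$ and objects of $\cat{D}$.

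For left orthogonality I would use the explicit description of $\cat{C}_{w\leq 0}$ and $\cat{C}_{w\geq 0}$ provided by \cref{weighttosilting}: both are Karoubi envelopes of extension closures of appropriate shifts of $\silting$. Since vanishing of a Hom-functor is stable under direct summands, extensions, and shifts in either variable, the perpendicularities $\cat{C}_{w\leq 0}\perp\cat{D}^{t_\silting>0}$ and $\cat{C}_{w\geq 0}\perp\cat{D}^{t_\silting<0}$ reduce to the shift-matching check on generators $P[m]$ with $m\geq 0$ (resp.\ $m\leq 0$), which is immediate from the definitions of $\silting^{\perp_{\leq 0}}$ and $\silting^{\perp_{\geq 0}}$.

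Once left orthogonality is in hand, \cref{orthogonalitylemma} supplies \textbf{w}-strictness for free, so only \textbf{t}-strictness remains. For this, observe that $\silting\subseteq\coheart_w\subseteq\cat{C}_{w\leq 0}\cap\cat{C}_{w\geq 0}$, so any $X\in(\cat{C}_{w\geq 0})^\perp$ satisfies $\Hom_\cat{D}(P,X[n])=0$ for all $P\in\silting$ and $n\geq 0$ simply because $P[-n]\in\cat{C}_{w\geq 0}$; hence $X\in\silting^{\perp_{\geq 0}}=\cat{D}^{t_\silting<0}$. Together with left orthogonality this gives $\cat{D}^{t_\silting<0}=(\cat{C}_{w\geq 0})^\perp$, and the symmetric argument using $P[-n]\in\cat{C}_{w\leq 0}$ for $n\leq 0$ yields $\cat{D}^{t_\silting>0}=(\cat{C}_{w\leq 0})^\perp$.

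There is no real obstacle here: the only point requiring care is the sign/shift bookkeeping, ensuring that the shifts of $\silting$ generating the two halves of $w$ line up correctly with the shifts implicit in $\silting^{\perp_{>0}}$ and $\silting^{\perp_{<0}}$ defining $t_\silting$. Everything else is a direct application of \cref{weighttosilting} and \cref{orthogonalitylemma}.
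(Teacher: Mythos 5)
Your proof is correct and rests on the same ingredients as the paper's: the explicit generator description of $w$ from \cref{weighttosilting}, the definition $t_\silting=(\silting^{\perp_{>0}},\silting^{\perp_{<0}})$, shift bookkeeping, and \cref{orthogonalitylemma} to obtain $\mathbf{w}$-strictness. The paper's one-line proof observes $\mathbf{t}$-strictness directly from the construction of $t_\silting$ (it has already noted $\cat{D}^{t\leq 0}=(\cat{C}_{w>0})^\perp$ and $\cat{D}^{t\geq 0}=(\cat{C}_{w<0})^\perp$ in the paragraph preceding the corollary) and then invokes \cref{orthogonalitylemma}; you instead verify plain left orthogonality first to get $\mathbf{w}$-strictness and then establish $\mathbf{t}$-strictness by a separate two-inclusion argument using $\silting\subseteq\coheart_w$ — a slightly more verbose decomposition of the same computation, but in essence the same route.
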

			\begin{proof}
				From the construction of $t$ it is clear that $w$ is \textbf{t}-strictly left orthogonal to $t$, and the orthogonality is \textbf{w}-strict by \cref{orthogonalitylemma}.
			\end{proof}
			The following lemma shows that in the case of adjacent weight structures and \textbf{t}-structures we do not need to distinguish between the various levels of strictness of orthogonality at all.
			The equivalence \cref{adjacency}$\iff$\cref{leftortho}, which recovers the original definition \cite[Def.~4.4.1]{Bondarko} of adjacency, is also shown in \cite[Prop.~1.3.3]{BondarkoWeightTAndBack}.
			\begin{lemma}
				\label{adjacencylemma}
				Let $t$ be a \textbf{t}-structure and $w$ a weight structure on $\cat{C}$.
				Then the following are equivalent:
				\begin{tfae}
					\item\label{adjacency} $\cat{C}^{t\leq 0}=\cat{C}_{w\leq 0}$,
					\item\label{leftortho} $w$ is left orthogonal to $t$,
					\item\label{tstrict} $w$ is \textbf{w}-strictly left orthogonal to $t$,
					\item\label{wstrict} $w$ is \textbf{t}-strictly left orthogonal to $t$,
					\item\label{wtstrict} $w$ is \textbf{w}-\textbf{t}-strictly left orthogonal to $t$.
				\end{tfae}
			\end{lemma}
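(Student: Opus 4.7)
The plan is to reduce to \cref{orthogonalitylemma} applied with $\cat{D}=\cat{C}$, and to exploit the fact that the basic weight structure identity $(\cat{C}_{w>0})^\perp=\cat{C}_{w\leq 0}$ is stable under shifting.

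Specializing \cref{orthogonalitylemma} to $\cat{D}=\cat{C}$ identifies \cref{leftortho} with \cref{tstrict}, and both with the conjunction of $\cat{C}_{w\leq 0}=\cat{C}^{t\leq 0}$ and $\cat{C}_{w\geq 0}=\prescript{\perp}{}{(\cat{C}^{t<0})}$. The first of these is precisely \cref{adjacency}, and I would deduce the second from it as follows: adjacency gives $\cat{C}_{w<0}=\cat{C}_{w\leq 0}[1]=\cat{C}^{t\leq 0}[1]=\cat{C}^{t<0}$, after which the standard weight structure identity $\cat{C}_{w\geq 0}=\prescript{\perp}{}{(\cat{C}_{w<0})}$ yields the desired equality. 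The converse direction is immediate, since the intersection description already contains the adjacency equality. Hence \cref{adjacency}$\iff$\cref{leftortho}$\iff$\cref{tstrict}.

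The implications \cref{wtstrict}$\imp$\cref{wstrict}, \cref{wtstrict}$\imp$\cref{tstrict}, and \cref{wstrict}$\imp$\cref{leftortho} are immediate from the definitions, so it remains to show \cref{adjacency}$\imp$\cref{wstrict}. Under adjacency, the standard \textbf{t}-structure identity gives $(\cat{C}_{w\leq 0})^\perp=(\cat{C}^{t\leq 0})^\perp=\cat{C}^{t>0}$. For the other half, I would shift the weight structure identity $(\cat{C}_{w>0})^\perp=\cat{C}_{w\leq 0}$ by $[1]$: using the general fact $(\mathcal{A}[1])^\perp=\mathcal{A}^\perp[1]$, this becomes $(\cat{C}_{w\geq 0})^\perp=\cat{C}_{w\leq 0}[1]=\cat{C}_{w<0}$, and combining with $\cat{C}_{w<0}=\cat{C}^{t<0}$ from adjacency gives $(\cat{C}_{w\geq 0})^\perp=\cat{C}^{t<0}$ as required.

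The main obstacle to anticipate is simply recognizing the shifted weight structure identity $(\cat{C}_{w\geq 0})^\perp=\cat{C}_{w<0}$; once this is in hand, the \textbf{t}-strict half of \cref{wtstrict} is automatic under adjacency, and no additional boundedness or nondegeneracy hypotheses are needed for all five conditions to collapse together.
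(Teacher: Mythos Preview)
Your proof is correct and follows essentially the same route as the paper: both arguments reduce \cref{leftortho}$\imp$\cref{adjacency} to \cref{orthogonalitylemma}, note the trivial implications among \cref{leftortho}--\cref{wtstrict}, and then derive the strict orthogonality conditions from \cref{adjacency} via the identities $\cat{C}_{w\geq 0}=\prescript{\perp}{}{(\cat{C}_{w<0})}$, $\cat{C}_{w\leq 0}=(\cat{C}_{w>0})^\perp$, $\cat{C}^{t\leq 0}=\prescript{\perp}{}{(\cat{C}^{t>0})}$, $\cat{C}^{t\geq 0}=(\cat{C}^{t<0})^\perp$ together with $\cat{C}_{w<0}=\cat{C}^{t<0}$. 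The only organizational difference is that the paper shows \cref{adjacency}$\imp$\cref{wtstrict} in one stroke, whereas you first extract \cref{adjacency}$\iff$\cref{tstrict} from \cref{orthogonalitylemma} and then prove \cref{adjacency}$\imp$\cref{wstrict} separately; you might make explicit that \cref{wtstrict} then follows because it is by definition the conjunction of \cref{tstrict} and \cref{wstrict}.
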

			\begin{proof}
				\cref{leftortho}$\imp$\cref{adjacency} follows from \cref{orthogonalitylemma}, and the implications \cref{wtstrict}$\imp$\cref{wstrict}, \cref{wtstrict}$\imp$\cref{tstrict}, \cref{tstrict}$\imp$\cref{leftortho} and \cref{wstrict}$\imp$\cref{leftortho} are obvious from the definitions.
				For the remaining implication \cref{adjacency}$\imp$\cref{wtstrict} observe that
				\begin{align*}
					\cat{C}_{w\geq 0}&=\prescript{\perp}{}{(\cat{C}_{w<0})}=\prescript{\perp}{}{(\cat{C}^{t<0})},&
					\cat{C}_{w\leq 0}&=\cat{C}^{t\leq 0}=\prescript{\perp}{}{(\cat{C}^{t>0})},\\
					\cat{C}^{t\leq 0}&=\cat{C}_{w\leq 0}=(\cat{C}_{w>0})^\perp,&
					\cat{C}^{t\geq 0}&=(\cat{C}^{t<0})^\perp=(\cat{C}_{w<0})^\perp.\qedhere
				\end{align*}
			\end{proof}
	\section{Silting collections and derived projectives}\label{derivedprojectives}
		It is well-known that silting collections behave very similar to projective objects.
		To make this precise, in this section we introduce derived projective covers, and show that under some assumptions the derived projective covers of simple objects of the heart are the same as a silting collection.
		As an application, we use derived projective covers to formulate criteria for orthogonality.
		\subsection{Derived projective objects}
			We begin by showing some basic facts about derived projective objects.
			Let $\cat{D}$ be a Krull--Schmidt triangulated category and $t$ a \textbf{t}-structure on $\cat{D}$.
			\begin{dfn}
				An object $P\in\cat{D}$ is \emph{derived projective} (with respect to $t$) if $P\in\cat{D}^{t\leq 0}$ and $\Hom_\cat{D}(P,X[1])=0$ for all $X\in\cat{D}^{t\leq 0}$.
				We write $\DProj_t(\cat{D})$ for the full subcategory of derived projective objects with respect to $t$.
			\end{dfn}
			\cite[Def.~6.1]{GenoveseLowenvdBergh} gives a different definition of derived projective objects, which is equivalent to the above by \cite[Prop.~2.3.5]{GenoveseRamos}.
			Derived projective objects are also known as \emph{Ext-projectives} or just \emph{projectives}, see for instance \cite{CoelhoSimoesPauksztelloPloog}, \cite[\S 7.2.2]{LurieHigherAlgebra} and \cite[\S C.5.7]{LurieSpectralAlgebraicGeometry}.
			
			The definition of derived projectives is motivated by the well-known fact that an object $P$ of an abelian category $\cat{A}$ is projective if and only if $\Ext_\cat{A}^1(P,X)=0$ for all $X\in\cat{A}$.
			From this point of view, the following lemma is an analog of the statement that $\Hom_\cat{A}(P,-)$ is right exact if $P\in\cat{A}$ is projective.
			\begin{lemma}
				\label{derivedprojtruncationfullyfaithful}
				Let $P\in\DProj_t(\cat{D})$ and $X,Y\in\cat{D}$.
				\begin{enumerate}
					\item For $f\: X\to Y$ with $\cone(f)\in\cat{D}^{t<0}$, the map $\Hom_\cat{D}(P,f)\: \Hom_\cat{D}(P,X)\to\Hom_\cat{D}(P,Y)$ is surjective.
					\item $t_{\geq 0}$ and $t_{\leq 0}$ induce isomorphisms
						\begin{equation*}
							\Hom_\cat{D}(P,X)\cong\Hom_{\cat{D}^{t\geq 0}}(t_{\geq 0}P,t_{\geq 0}X)\cong\Hom_{\heart_t}(H_t^0(P),H_t^0(X)).
						\end{equation*}
					\item $P$ is indecomposable if and only if $H_t^0(P)$ is.
				\end{enumerate}
			\end{lemma}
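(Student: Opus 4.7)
The plan is to derive~(1) and~(2) by applying $\Hom_\cat{D}(P,-)$ to canonical truncation triangles and exploiting the vanishing from derived projectivity, and then to deduce~(3) from~(2) by observing that the isomorphism there is actually multiplicative. For~(1), I would apply $\Hom_\cat{D}(P,-)$ to the triangle $X\xrightarrow{f} Y\to\cone(f)\to X[1]$; by exactness, $\Hom_\cat{D}(P,f)$ is surjective provided $\Hom_\cat{D}(P,\cone(f))=0$. But $\cone(f)\in\cat{D}^{t<0}=\cat{D}^{t\leq 0}[1]$, so writing $\cone(f)=Z[1]$ with $Z\in\cat{D}^{t\leq 0}$, derived projectivity gives $\Hom_\cat{D}(P,Z[1])=0$.

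For~(2), I would first apply $\Hom_\cat{D}(P,-)$ to the truncation triangle $t_{\leq -1}X\to X\to t_{\geq 0}X\to t_{\leq -1}X[1]$. Both $t_{\leq -1}X$ and $t_{\leq -1}X[1]$ lie in $\cat{D}^{t\leq 0}[1]$, so derived projectivity annihilates the corresponding $\Hom$-groups and yields $\Hom_\cat{D}(P,X)\cong\Hom_\cat{D}(P,t_{\geq 0}X)$. Since $t_{\geq 0}$ is left adjoint to the inclusion $\cat{D}^{t\geq 0}\hookto\cat{D}$, this identifies with $\Hom_{\cat{D}^{t\geq 0}}(t_{\geq 0}P,t_{\geq 0}X)$, and $t_{\geq 0}P=H_t^0(P)$ because $P\in\cat{D}^{t\leq 0}$. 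To pass to the heart, I would apply $\Hom_\cat{D}(H_t^0(P),-)$ to the triangle $H_t^0(X)\to t_{\geq 0}X\to t_{\geq 1}X\to H_t^0(X)[1]$; both $\Hom_\cat{D}(H_t^0(P),t_{\geq 1}X)$ and $\Hom_\cat{D}(H_t^0(P),t_{\geq 1}X[-1])$ vanish by $\cat{D}^{t\leq 0}\perp\cat{D}^{t\geq 1}$ (the latter term lies in $\cat{D}^{t\geq 2}\subseteq\cat{D}^{t\geq 1}$), yielding the desired identification with $\Hom_{\heart_t}(H_t^0(P),H_t^0(X))$.

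For~(3), specializing~(2) to $X=P$ produces an isomorphism $\End_\cat{D}(P)\cong\End_{\heart_t}(H_t^0(P))$; tracing through the adjunction maps shows it coincides with the map induced by the cohomological functor $H_t^0$, hence is a ring isomorphism. In a Krull--Schmidt category an object is indecomposable iff its endomorphism ring is local, and this criterion transfers verbatim to the full subcategory $\heart_t$. Therefore $P$ is indecomposable iff $\End_\cat{D}(P)$ is local iff $\End_{\heart_t}(H_t^0(P))$ is local iff $H_t^0(P)$ is indecomposable. The only mildly delicate point is verifying multiplicativity of the composite in~(2), but this is routine given the naturality of the truncation functors and the adjunction units.
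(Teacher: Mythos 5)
Your proposal is correct and follows essentially the same route as the paper: part~(1) via the long exact sequence and derived projectivity, part~(2) via the long exact sequence for the truncation triangle of $X$ together with the adjunction $t_{\geq 0}\dashv(\cat{D}^{t\geq 0}\hookto\cat{D})$, and part~(3) by transferring locality of endomorphism rings across the ring isomorphism from~(2). The only small divergence is in the final step of~(2): you pass from $\Hom(H_t^0(P),t_{\geq 0}X)$ to $\Hom_{\heart_t}(H_t^0(P),H_t^0(X))$ by applying $\Hom(H_t^0(P),-)$ to the triangle $H_t^0(X)\to t_{\geq 0}X\to t_{\geq 1}X\to$ and invoking $\cat{D}^{t\leq 0}\perp\cat{D}^{t>0}$, whereas the paper invokes the right adjunction $(\cat{D}^{t\leq 0}\hookto\cat{D})\vdash t_{\leq 0}$ applied to $H_t^0(P)\in\cat{D}^{t\leq 0}$; both are standard and equally valid.
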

			\begin{proof}\leavevmode
				\begin{enumerate}
					\item This is immediate from the long exact sequence obtained by applying $\Hom_\cat{D}(P,-)$ to the triangle $X\to Y\to\cone(f)\to X[1]$.
					\item From the long exact sequence obtained by applying $\Hom_\cat{D}(P,-)$ to the triangle $t_{<0}X\to X\to t_{\geq 0}X\to t_{<0}X[1]$, and derived projectivity of $P$, we get
						\begin{equation*}
							\Hom_\cat{D}(P,X)\cong\Hom_\cat{D}(P,t_{\geq 0}X)\cong\Hom_{\cat{D}^{t\geq 0}}(t_{\geq 0}P,t_{\geq 0}X).
						\end{equation*}
						This isomorphism is given by the functor $t_{\geq 0}$.
						The second isomorphism follows since $t_{\leq 0}$ is right adjoint to $\cat{D}^{t\leq 0}\hookto\cat{D}$, using that $t_{\geq 0}P=H_t^0(P)$.
					\item By 2) we have $\End_\cat{D}(P)\cong\End_{\heart_t}(H_t^0(P))$.
						Since $\cat{D}$ is Krull--Schmidt, $P$ is indecomposable if and only if $\End_\cat{D}(P)$ is local (and analogously for $H_t^0(P)$).\qedhere
				\end{enumerate}
			\end{proof}
			An easy but important observation is that silting collections consist of derived projectives with respect to their associated silting \textbf{t}-structures, see \cref{siltingderivedproj} below.
			With this in mind, \cref{derivedprojtruncationfullyfaithful} as well as the following lemma is contained in \cite[Prop.~2.5]{AHLSV}.
			Variants of \cref{truncatderivedproj} have already appeared several times in the literature, see for instance \cite{AlNofayee} or \cite[Prop.~4.3]{PsaroudakisVitoria}.
			\begin{lemma}
				\label{truncatderivedproj}
				If $P\in\DProj_t(\cat{D})$, then $t_{\geq 0}P=H_t^0(P)$ is projective in $\heart_t$.
			\end{lemma}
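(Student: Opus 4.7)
The plan is to show projectivity of $H_t^0(P)$ in $\heart_t$ directly by verifying that $\Hom_{\heart_t}(H_t^0(P), -)$ sends epimorphisms in $\heart_t$ to surjections of abelian groups. All the technical work has already been done in \cref{derivedprojtruncationfullyfaithful}; the proof is essentially a bookkeeping exercise combining its two parts.

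First I would check that $t_{\geq 0}P$ really lies in $\heart_t$. By definition $t_{\geq 0}P \in \cat{D}^{t\geq 0}$, so it suffices to see that $t_{\geq 0}P \in \cat{D}^{t\leq 0}$. This follows from the \textbf{t}-decomposition triangle $t_{\leq -1}P \to P \to t_{\geq 0}P \to t_{\leq -1}P[1]$: both $t_{\leq -1}P \in \cat{D}^{t\leq -1} \subseteq \cat{D}^{t\leq 0}$ and $P \in \cat{D}^{t\leq 0}$ (the latter by derived projectivity), and $\cat{D}^{t\leq 0}$ is extension-closed, so $t_{\geq 0}P \in \cat{D}^{t\leq 0}$. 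Hence $H_t^0(P) = t_{\geq 0}P \in \heart_t$.

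Next, to check projectivity in $\heart_t$, let $\pi\colon M \twoheadrightarrow N$ be an epimorphism in $\heart_t$ with kernel $K \in \heart_t$. The short exact sequence $0 \to K \to M \xrightarrow{\pi} N \to 0$ in the heart gives rise to a distinguished triangle $K \to M \xrightarrow{\pi} N \to K[1]$ in $\cat{D}$, so $\cone(\pi) = K[1] \in \cat{D}^{t<0}$ since $K \in \heart_t \subseteq \cat{D}^{t\leq 0}$. Part~1 of \cref{derivedprojtruncationfullyfaithful} then applies and shows that $\Hom_\cat{D}(P,\pi)\colon \Hom_\cat{D}(P,M) \to \Hom_\cat{D}(P,N)$ is surjective. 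Now applying the chain of isomorphisms from part~2 of the same lemma to $M$ and $N$, and noting naturality of those isomorphisms in the second argument, the surjection transports to a surjection
\begin{equation*}
	\Hom_{\heart_t}\bigl(H_t^0(P), M\bigr) \twoheadrightarrow \Hom_{\heart_t}\bigl(H_t^0(P), N\bigr).
\end{equation*}
Thus $H_t^0(P)$ is projective in $\heart_t$.

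There is no real obstacle: the heart $\heart_t$ is an abelian subcategory whose short exact sequences literally come from distinguished triangles in $\cat{D}$, so derived projectivity with respect to $t$ immediately implies projectivity of $H_t^0(P)$ once one has the fully faithful comparison $\Hom_\cat{D}(P,-) \cong \Hom_{\heart_t}(H_t^0(P),-)$ on $\heart_t$. The only minor subtlety worth mentioning explicitly is verifying $t_{\geq 0}P \in \heart_t$, which is where the hypothesis $P \in \cat{D}^{t\leq 0}$ enters essentially.
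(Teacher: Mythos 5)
Your proof is correct, but it takes a genuinely different route from the paper's. You verify projectivity of $H_t^0(P)$ directly via the lifting property: every epimorphism in $\heart_t$ has cone in $\cat{D}^{t<0}$ (being a shift of the kernel), so part~1 of \cref{derivedprojtruncationfullyfaithful} gives surjectivity of $\Hom_\cat{D}(P,-)$, and part~2 transports this to $\Hom_{\heart_t}(H_t^0(P),-)$ by naturality. The paper instead uses the criterion that an object of an abelian category is projective iff its $\Ext^1$ groups vanish, combined with the well-known identification $\Ext^1_{\heart_t}(t_{\geq 0}P,X)\cong\Hom_\cat{D}(t_{\geq 0}P,X[1])$; it then shows this Hom group vanishes by applying $\Hom_\cat{D}(-,X[1])$ to the triangle $t_{<0}P\to P\to t_{\geq 0}P\to t_{<0}P[1]$ and invoking derived projectivity of $P$. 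The paper's argument is more self-contained (it does not depend on \cref{derivedprojtruncationfullyfaithful} at all, only on the Yoneda-Ext comparison), whereas yours reuses the comparison isomorphism already established and so is arguably closer to "derived projectivity transports along $H_t^0$"; both are short and both are correct. One small point: you are right that naturality of the isomorphism in part~2 of \cref{derivedprojtruncationfullyfaithful} is needed, and worth flagging, since the lemma as stated asserts only a pointwise isomorphism; naturality does hold because each step in that isomorphism is an adjunction or unit map.
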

			\begin{proof}
				Since $P\in\cat{D}^{t\leq 0}$ we obviously have $t_{\geq 0}P\in\heart_t$.
				It is well-known (see e.g.~\cite[Prop.~A.7.18]{Achar}) that $\Ext_{\heart_t}^1(t_{\geq 0}P,X)\cong\Hom_\cat{D}(t_{\geq 0}P,X[1])$ for $X\in\heart_t$, where $\Ext_{\heart_t}^1$ is defined via equivalence classes of short exact sequences (Yoneda ext).
				From the long exact sequence obtained by applying $\Hom_\cat{D}(-,X[1])$ to the triangle $t_{<0}P\to P\to t_{\geq 0}P\to t_{<0}P[1]$ and derived projectivity of $P$ it follows that $\Hom_\cat{D}(t_{\geq 0}P,X[1])=0$, and hence $t_{\geq 0}P$ is projective in $\heart_t$.
			\end{proof}
			\begin{cor}
				\label{derprojsplit}
				For $f\: P\to P'$ with $P,P'\in\DProj_t(\cat{D})$ the following are equivalent:
				\begin{tfae}
					\item $f$ is a split epimorphism.
					\item $\cone(f)\in\cat{D}^{t<0}$.
					\item $t_{\geq 0}f=H_t^0(f)$ is an epimorphism in $\heart_t$.
					\item $t_{\geq 0}f=H_t^0(f)$ is a split epimorphism in $\heart_t$.
				\end{tfae}
			\end{cor}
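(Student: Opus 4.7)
The plan is to prove the four conditions equivalent by a cyclic chain of implications, say I)$\Rightarrow$IV)$\Rightarrow$III)$\Rightarrow$II)$\Rightarrow$I). Three of the four steps should be immediate once the right tools are invoked, so the main work is in the one step that converts a statement about $H_t^0(f)$ back into a statement about $\cone(f)$ in $\cat{D}$.

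The easy direction I)$\Rightarrow$IV) is pure functoriality: a section of $f$ gets mapped by $H_t^0$ (which is a functor) to a section of $H_t^0(f)$, and IV)$\Rightarrow$III) is trivial. For III)$\Rightarrow$II), I would extend $f$ to a distinguished triangle $P\to P'\to \cone(f)\to P[1]$ and apply the cohomological functor $H_t^\bullet$. Since $P,P'\in\cat{D}^{t\leq 0}$, the groups $H_t^i(P)$ and $H_t^i(P')$ vanish for $i>0$, which immediately gives $H_t^i(\cone(f))=0$ for $i\geq 1$; meanwhile the tail of the long exact sequence reads $H_t^0(P)\to H_t^0(P')\to H_t^0(\cone(f))\to 0$, so the assumption that $H_t^0(f)$ is epi forces $H_t^0(\cone(f))=0$ as well, hence $\cone(f)\in\cat{D}^{t<0}$.

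The implication II)$\Rightarrow$I) closes the loop and is exactly what the first part of \cref{derivedprojtruncationfullyfaithful} was set up for: with $\cone(f)\in\cat{D}^{t<0}$, that lemma (applied with $P'$ in place of $P$, using that $P'\in\DProj_t(\cat{D})$) tells us that $\Hom_\cat{D}(P',f)\colon \Hom_\cat{D}(P',P)\to\End_\cat{D}(P')$ is surjective, so $\id_{P'}$ admits a preimage, i.e.\ $f$ has a section.

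I do not expect a real obstacle here; the only subtle point is making sure the hypotheses of \cref{derivedprojtruncationfullyfaithful} are applied to the correct object (namely $P'$, not $P$), and that the long exact sequence argument in III)$\Rightarrow$II) genuinely uses $P,P'\in\cat{D}^{t\leq 0}$ rather than just derived projectivity. Nothing new beyond the preceding two lemmas is needed, which is why this is naturally packaged as a corollary.
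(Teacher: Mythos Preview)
Your argument is correct and matches the paper's intent (the corollary is stated without proof, as an immediate consequence of \cref{derivedprojtruncationfullyfaithful} and \cref{truncatderivedproj}). One small point to tighten in III)$\Rightarrow$II): from $H_t^i(\cone(f))=0$ for all $i\geq 0$ you cannot conclude $\cone(f)\in\cat{D}^{t<0}$ directly, since $t$ is not assumed non-degenerate here; instead first observe that $\cone(f)\in\cat{D}^{t\leq 0}$ (it sits in a triangle between $P'\in\cat{D}^{t\leq 0}$ and $P[1]\in\cat{D}^{t<0}$), and then $H_t^0(\cone(f))=0$ forces $\cone(f)\cong t_{<0}\cone(f)\in\cat{D}^{t<0}$.
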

			We will often need to assume that all projectives in $\heart_t$ are obtained as truncations of derived projectives.
			More precisely, we use the following definition from \cite[Def.~6.1 and Def.~6.6]{GenoveseLowenvdBergh}:
			\begin{dfn}
				$\cat{D}$ \emph{has derived projectives} (with respect to $t$) if for every projective $P\in\heart_t$ there is $\hat{P}\in\DProj_t(\cat{D})$ with $H_t^0(\hat{P})\cong P$.
				If moreover $\heart_t$ has enough projectives, we say that $\cat{D}$ \emph{has enough derived projectives} (with respect to $t$).
			\end{dfn}
			In \cref{siltingderprojconditions} we will show that if $\heart_t$ is finite-length, then $\cat{D}$ has enough derived projectives if and only if $t$ is silting.
			In general, $\cat{D}$ does not necessarily have enough derived projectives, even if $\heart_t$ has enough projectives.
			For instance, this is the case for the standard \textbf{t}-structure on $\Dfd(A)$ if $A$ is a non-positive dg algebra such that $H^n(A)$ is finite-dimensional for all $n\in\ints$, but $H^*(A)$ is not, see \cref{notenoughderproj} below.
			\begin{cor}
				\label{equivalencederivedprojheartproj}
				If $\cat{D}$ has derived projectives with respect to $t$, then $t_{\geq 0}=H_t^0\: \DProj_t(\cat{D})\to\Proj(\heart_t)$ is an equivalence of categories.
			\end{cor}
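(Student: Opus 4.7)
The plan is to verify directly that the functor $t_{\geq 0}=H_t^0$ is well-defined, fully faithful, and essentially surjective, using the preceding lemmas as black boxes.

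First I would observe that the functor is well-defined: for any $P\in\DProj_t(\cat{D})$, \cref{truncatderivedproj} tells us that $H_t^0(P)$ is projective in $\heart_t$, so $t_{\geq 0}$ indeed restricts to a functor $\DProj_t(\cat{D})\to\Proj(\heart_t)$.

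Next I would establish full faithfulness by applying part~2 of \cref{derivedprojtruncationfullyfaithful}. For $P,P'\in\DProj_t(\cat{D})$, taking $X=P'$ in that statement yields the chain of isomorphisms
\begin{equation*}
    \Hom_\cat{D}(P,P')\cong\Hom_{\cat{D}^{t\geq 0}}(t_{\geq 0}P,t_{\geq 0}P')\cong\Hom_{\heart_t}(H_t^0(P),H_t^0(P')),
\end{equation*}
and these isomorphisms are induced by $t_{\geq 0}$ (respectively $H_t^0$), so the functor is fully faithful.

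Finally essential surjectivity is immediate from the assumption that $\cat{D}$ has derived projectives with respect to $t$: by definition, for every projective $P\in\heart_t$ there exists $\hat P\in\DProj_t(\cat{D})$ with $H_t^0(\hat P)\cong P$. The proof is thus essentially a bookkeeping exercise and there is no real obstacle, as all the substantial work has been done in \cref{derivedprojtruncationfullyfaithful,truncatderivedproj} and encoded into the definition of ``has derived projectives''.
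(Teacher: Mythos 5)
Your proof is correct and follows exactly the same structure as the paper's: well-definedness via \cref{truncatderivedproj}, full faithfulness via \cref{derivedprojtruncationfullyfaithful}(2), and density directly from the definition of having derived projectives. The only difference is that you spell out the steps in more detail than the paper's one-line proof.
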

			\begin{proof}
				The functor is well-defined by \cref{truncatderivedproj} and fully faithful by \cref{derivedprojtruncationfullyfaithful}, and that $\cat{D}$ has derived projectives ensures that it is dense.
			\end{proof}
			The following theorem shows that the definition of enough derived projectives given in \cite[Def.~2.2]{CoelhoSimoesPauksztelloPloog} is equivalent to the one we use.
			\begin{thm}
				\label{enoughderivedproj}
				$\cat{D}$ has enough derived projectives with respect to $t$ if and only if $\DProj_t(\cat{D})$ is contravariantly finite in $\cat{D}^{t\leq 0}$ and $\DProj_t(\cat{D})^\perp\cap\heart_t=\{0\}$.
			\end{thm}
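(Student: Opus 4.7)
The plan is to prove both directions by systematically exploiting \cref{derivedprojtruncationfullyfaithful,truncatderivedproj} to translate statements about $\DProj_t(\cat{D})$ into statements about $\Proj(\heart_t)$ (and vice versa), so that the key issue reduces to comparing the ``enough projectives'' hypothesis in the two worlds.

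For the forward direction, assume $\cat{D}$ has enough derived projectives. For the $\perp$-condition, if $X\in\heart_t$ lies in $\DProj_t(\cat{D})^\perp$, pick a projective $P\in\heart_t$ with an epimorphism $P\twoheadrightarrow X$, lift it to $\hat P\in\DProj_t(\cat{D})$ with $H_t^0(\hat P)\cong P$, and use the isomorphism $\Hom_\cat{D}(\hat P,X)\cong\Hom_{\heart_t}(P,X)$ from \cref{derivedprojtruncationfullyfaithful}: the vanishing of the left side forces the epimorphism to be zero, hence $X=0$. For contravariant finiteness, given $X\in\cat{D}^{t\leq 0}$, lift an epimorphism $P\twoheadrightarrow H_t^0(X)$ in $\heart_t$ to a map $\alpha\colon\hat P\to X$ in $\cat{D}$ (again via \cref{derivedprojtruncationfullyfaithful}). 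Any $f\colon Q\to X$ with $Q\in\DProj_t(\cat{D})$ corresponds to $H_t^0(f)\colon H_t^0(Q)\to H_t^0(X)$, which lifts through $P\twoheadrightarrow H_t^0(X)$ by projectivity of $H_t^0(Q)$ (\cref{truncatderivedproj}); lifting back to $\cat{D}$ gives the required factorization.

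For the backward direction the two axioms are combined to produce, given $X\in\heart_t\subseteq\cat{D}^{t\leq 0}$, a projective cover in $\heart_t$. Take a right $\DProj_t(\cat{D})$-approximation $\alpha\colon\hat P\to X$, so $H_t^0(\hat P)\in\Proj(\heart_t)$ by \cref{truncatderivedproj}. The main step is to show $H_t^0(\alpha)$ is an epimorphism in $\heart_t$: letting $Y=\coker H_t^0(\alpha)$ and applying the second axiom, it suffices to show $Y\in\DProj_t(\cat{D})^\perp$. Given $h\colon Q\to Y$ with $Q\in\DProj_t(\cat{D})$, use projectivity of $H_t^0(Q)$ to lift the corresponding map $H_t^0(Q)\to Y$ through $X\twoheadrightarrow Y$, then translate the lift back to a map $Q\to X$ via \cref{derivedprojtruncationfullyfaithful}, factor it through $\alpha$ by the approximation property, and observe that the composite $H_t^0(Q)\to H_t^0(\hat P)\to X\twoheadrightarrow Y$ vanishes by definition of $Y$. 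This gives $h=0$, so $Y=0$ and $\heart_t$ has enough projectives. Finally, to show every $P\in\Proj(\heart_t)$ lifts, apply the previous step to obtain $\hat Q\in\DProj_t(\cat{D})$ with $H_t^0(\hat Q)\twoheadrightarrow P$; this surjection splits, so $P$ is a summand of $H_t^0(\hat Q)$. The corresponding idempotent lifts via $\End_\cat{D}(\hat Q)\cong\End_{\heart_t}(H_t^0(\hat Q))$, and splits in $\cat{D}$ by Krull--Schmidt, giving the desired $\hat P\in\DProj_t(\cat{D})$ (using that $\DProj_t(\cat{D})$ is closed under summands).

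I expect the only real obstacle to be the surjectivity step in the backward direction: the interplay between the approximation property and the cokernel calculation needs careful tracking of which Hom-set one is working in, but once one commits to systematically transporting everything through the isomorphisms of \cref{derivedprojtruncationfullyfaithful} the argument is essentially forced.
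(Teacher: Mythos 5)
Your proposal is correct and follows essentially the same route as the paper's proof: both directions systematically transport statements between $\DProj_t(\cat{D})$ and $\Proj(\heart_t)$ via the isomorphism $\Hom_\cat{D}(P,X)\cong\Hom_{\heart_t}(H_t^0(P),H_t^0(X))$, and the backward direction in both cases hinges on showing that the obstruction to $H_t^0$ of the approximation map being an epimorphism lies in $\DProj_t(\cat{D})^\perp\cap\heart_t$, followed by idempotent splitting in the Krull--Schmidt category $\cat{D}$ to obtain the lift of an arbitrary projective. The only cosmetic differences are that you use an arbitrary epimorphism from a projective (rather than a projective cover) in the forward direction, and that you work with $\coker H_t^0(\alpha)$ directly rather than with $H_t^0(\cone(\alpha))$ — the latter equals the former since $X\in\heart_t$, so these are interchangeable.
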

			\begin{proof}
				``$\imp$'': For $X\in\cat{D}^{t\leq 0}$ we have $H_t^0(X)\in\heart_t$.
				Since $\cat{D}$ has enough derived projectives, there is an epimorphism $\pi\: P\to H_t^0(X)$ with $P$ projective in $\heart_t$, and moreover $\hat{P}\in\DProj_t(\cat{D})$ with $H_t^0(\hat{P})=t_{\geq 0}\hat{P}\cong P$.
				By \cref{derivedprojtruncationfullyfaithful} there is a unique morphism $\hat{\pi}\colon \hat{P}\to X$ such that $H_t^0(\hat{\pi})=\pi$.
				We claim that $\hat{P}$ is a right $\DProj_t(\cat{D})$-approximation.
				Indeed, for $P'\in\DProj_t(\cat{D})$ by \cref{derivedprojtruncationfullyfaithful} we get a commutative diagram
				\begin{equation*}
					\begin{tikzcd}[column sep=huge]
						\Hom_\cat{D}(P',\hat{P})\arrow{r}{\Hom_\cat{D}(P',\hat{\pi})}\arrow{d}{H_t^0}[swap]{\cong}
						&\Hom_\cat{D}(P',X)\arrow{d}{\cong}[swap]{H_t^0}\\
						\Hom_{\heart_t}(H_t^0(P'),P)\arrow{r}{\Hom_{\heart_t}(H_t^0(P'),\pi)}
						&\Hom_{\heart_t}(H_t^0(P'),H_t^0(X))\rlap{,}
					\end{tikzcd}
				\end{equation*}
				and the bottom map is surjective since $\pi\: H_t^0(\hat{P})\to H_t^0(X)$ is an epimorphism and $H_t^0(P')\in\Proj(\heart_t)$ by \cref{truncatderivedproj}.
				
				Since $\heart_t$ has enough projectives, $X\in\heart_t$ is zero if and only if $\Hom_{\heart_t}(P,X)=0$ for all $P\in\Proj(\heart_t)$.
				Since $\cat{D}$ has derived projectives, for every $P\in\Proj(\heart_t)$ there is $\hat{P}\in\DProj_t(\cat{D})$ with $H_t^0(\hat{P})\cong P$.
				By \cref{derivedprojtruncationfullyfaithful} it follows that $X\in\heart_t$ is zero if and only if $\Hom_\cat{D}(\hat{P},X)=0$ for all $\hat{P}\in\DProj_t(\cat{D})$, as required.
				
				``$\pmi$'': We first show that $\heart_t$ has enough projectives.
				If $X\in\heart_t$, then $X\in\cat{D}^{t\leq 0}$.
				By assumption, there is a right $\DProj_t(\cat{D})$-approximation $\pi\: P\to X$.
				By \cref{truncatderivedproj} $H_t^0(P)$ is projective in $\heart_t$, and so it suffices to show that $H_t^0(\pi)\: H_t^0(P)\to H_t^0(X)=X$ is an epimorphism.
				
				For this, let $P'\in\DProj_t(\cat{D})$ and apply $\Hom_\cat{D}(P',-)$ to the triangle $P\longto{\pi}X\to \cone(\pi)\to P[1]$.
				This gives an exact sequence
				\begin{equation*}
					\Hom_\cat{D}(P',P)\to\Hom_\cat{D}(P',X)\to\Hom_\cat{D}(P',\cone(\pi))\to\Hom_\cat{D}(P',P[1]).
				\end{equation*}
				The first map is surjective since $\pi\: P\to X$ is a right $\DProj_t(\cat{D})$-approximation, and the last term vanishes as $P'$ is derived projective and $P[1]\in\cat{D}^{t<0}$.
				Thus $\Hom_\cat{D}(P',\cone(\pi))=0$.
				As $t_{\geq 0}$ is left adjoint to $\cat{D}^{t\geq 0}\hookto\cat{D}$ and $t_{\geq 0}P'=H_t^0(P')$, we get
				\begin{equation*}
					\begin{split}
						\Hom_{\cat{D}}(P',H_t^0(\cone(\pi)))&\cong\Hom_{\heart_t}(H_t^0(P'),H_t^0(\cone(\pi))\\
						&\cong\Hom_\cat{D}(P',\cone(\pi))=0,
					\end{split}
				\end{equation*}
				where the last isomorphism is by \cref{derivedprojtruncationfullyfaithful}.
				Thus $H_t^0(\cone(\pi))\in\DProj_t(\cat{D})^\perp\cap\heart_t=\{0\}$.
				
				To show that $\cat{D}$ has derived projectives, let $P\in\heart_t$ be projective and let $\pi\: \tilde{P}\to P$ be a right $\DProj_t(\cat{D})$-approximation.
				By the previous argument, we have $\cone(\pi)\in\cat{D}^{t<0}$, and thus we get an epimorphism $H_t^0(\tilde{P})\to P$ in $\heart_t$.
				This splits since $P$ is projective, and thus $P$ is a summand of $H_t^0(\tilde{P})$.
				Since $H_t^0\: \DProj_t(\cat{D})\to\Proj(\heart_t)$ is fully faithful, there must be a corresponding summand $\hat{P}$ of $\tilde{P}$ with $H_t^0(\hat{P})\cong P$.
			\end{proof}
			\begin{rem}
				As is explained in \cite[Rem.~2.3]{CoelhoSimoesPauksztelloPloog}, in \cref{enoughderivedproj} the assumption that $\DProj_t(\cat{D})$ is contravariantly finite is unnecessary if $\DProj_t(\cat{D})$ contains only finitely many indecomposables.
			\end{rem}
			By combining \cref{enoughderivedproj} with \cite[Thm.~2.4]{CoelhoSimoesPauksztelloPloog} (see also \cite[Thm.~5.3.1]{BondarkoWeightTAndBack}) we obtain the following criterion for the existence of a weight structure that is left adjacent to a given \textbf{t}-structure.
			\begin{cor}
				\label{derprojadjacency}
				For a bounded above \textbf{t}-structure $t$ on a Hom-finite Krull--Schmidt triangulated category $\cat{D}$ the following are equivalent:
				\begin{tfae}
					\item\label{adjacencycond1} $\DProj_t(\cat{D})$ is contravariantly finite in $\cat{D}^{t\leq 0}$ and $\DProj_t(\cat{D})^\perp\cap\heart_t=\{0\}$.
					\item\label{adjacencycond2} $\cat{D}$ has enough derived projectives with respect to $t$.
					\item\label{adjacencycond3} $t$ admits a left adjacent weight structure.
				\end{tfae}
				Moreover, if these conditions hold, then $\heart_t$ is covariantly finite in $\cat{D}$.
			\end{cor}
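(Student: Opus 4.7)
The plan is to reduce the corollary to two pieces already available. The equivalence \cref{adjacencycond1}$\iff$\cref{adjacencycond2} is precisely what \cref{enoughderivedproj} asserts, so no new work is required for that direction. Thus all the remaining content of the statement lies in the equivalence with \cref{adjacencycond3} and in the closing claim about covariant finiteness of $\heart_t$.

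For the equivalence \cref{adjacencycond2}$\iff$\cref{adjacencycond3}, together with the ``moreover'' clause, I would invoke \cite[Thm.~2.4]{CoelhoSimoesPauksztelloPloog}. That theorem is formulated exactly in the setup we have assumed here, namely a bounded above \textbf{t}-structure on a Hom-finite Krull--Schmidt triangulated category, and it states that the existence of enough Ext-projectives (in their sense) is equivalent to the existence of a left adjacent co-\textbf{t}-structure, with the heart of the \textbf{t}-structure being covariantly finite as an additional output. Since a left adjacent co-\textbf{t}-structure is by definition a left adjacent weight structure (see \cref{adjacencylemma}), this gives \cref{adjacencycond3}.

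The only substantive translation step is to confirm that their notion of ``enough Ext-projectives'' agrees with our ``enough derived projectives''. But this is built into \cref{enoughderivedproj}: the equivalent characterization there (contravariant finiteness of $\DProj_t(\cat{D})$ in $\cat{D}^{t\leq 0}$ together with $\DProj_t(\cat{D})^\perp\cap\heart_t=\{0\}$) is exactly the working definition used in \cite{CoelhoSimoesPauksztelloPloog}, while ``Ext-projectives'' there coincide with our derived projectives as already noted in the discussion following the definition of $\DProj_t(\cat{D})$. So the two notions are literally the same object, and the cited theorem applies verbatim.

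The main (and in fact only) obstacle is therefore bookkeeping: matching the hypotheses and conventions of \cite{CoelhoSimoesPauksztelloPloog} with ours and verifying that ``left adjacent co-\textbf{t}-structure'' in their language is what we have called a left adjacent weight structure. No further constructions or computations are needed, since all the heavy lifting has been done either in \cref{enoughderivedproj} or in the cited reference.
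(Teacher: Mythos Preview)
Your proposal is correct and follows the same route as the paper: use \cref{enoughderivedproj} for \cref{adjacencycond1}$\iff$\cref{adjacencycond2}, then invoke \cite[Thm.~2.4]{CoelhoSimoesPauksztelloPloog} for the remaining equivalence and the covariant finiteness claim. The paper is slightly more explicit about the bookkeeping, noting that \cref{adjacencycond2} matches \cite[Thm.~2.4~(2)]{CoelhoSimoesPauksztelloPloog} via \cref{equivalencederivedprojheartproj}, and that \cite[Rem.~2.5]{CoelhoSimoesPauksztelloPloog} is needed because their condition~(1) includes covariant finiteness of $\heart_t$ as part of the hypothesis rather than as a conclusion.
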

			\begin{proof}
				By \cref{enoughderivedproj}, \cref{adjacencycond1} is equivalent to \cref{adjacencycond2}.
				Moreover \cref{adjacencycond2} is equivalent to \cite[Thm.~2.4 (2)]{CoelhoSimoesPauksztelloPloog} by \cref{equivalencederivedprojheartproj}, and \cref{adjacencycond1} is \cite[Thm.~2.4 (1)]{CoelhoSimoesPauksztelloPloog} without the assumption that $\heart_t$ is covariantly finite in $\cat{D}$.
				Thus the remaining implications follow from \cite[Thm.~2.4 and Rem.~2.5]{CoelhoSimoesPauksztelloPloog}.
			\end{proof}
			\begin{ex}\label{surprisingweightstruct}
				In particular, \cref{derprojadjacency} shows that for a finite-dimensional algebra $A$, the standard \textbf{t}-structure on $\Db(\rmodfd A)$ (see \cite[Ex.~1.3.2]{BBD}) admits a left adjacent weight structure.
				Indeed, in this case \cref{adjacencycond2} is obviously satisfied: the projective generator $A$ of $\heart_t\cong\rmodfd A$ is derived projective since $\Hom_{\Db(\rmodfd A)}(A,X[n])\cong H^n(X)$ for all $n\in\ints$ (or, in other words, since $A$ is the silting object defining the standard \textbf{t}-structure).
				An alternative way to obtain this weight structure on $\Db(\rmodfd A)$ is via \cite[Lemma~4.10]{AdachiMizunoYang} and \cref{wtequalsst} below.
				
				The adjacent weight structure can also be described explicitly, as follows.
				Let $\cat{D}\subseteq\D^-(\rmodfd A)$ be the full triangulated subcategory of complexes with finite-dimensional total cohomology, and $\cat{C}\subseteq\K^-(\rprojfg A)$ the full subcategory of complexes with finite-dimensional total cohomology.
				The obvious inclusions
				\begin{equation*}
					\begin{tikzcd}
						\cat{C}\arrow[hook]{r}
						&\cat{D}
						&\Db(\rmodfd A)\arrow[left hook->,swap]{l}
					\end{tikzcd}
				\end{equation*}
				are equivalences since any $Y\in\cat{D}$ can be $t$-truncated to an isomorphic object that lies in $\Db(\rmodfd A)$, and $\cat{C}$ precisely consists of the projective resolutions of objects in $\cat{D}$.
				Note that (by construction) the equivalence $\Db(\rmodfd A)\to\cat{C}$ sends a complex to a projective resolution.
				
				The standard weight structure on $\K^-(\rprojfg A)$ from \cite[\S 1.1]{Bondarko} restricts to a weight structure $w$ on $\cat{C}$, and thus yields a weight structure on $\Db(\rmodfd A)$.
				For this it suffices to check that if $X\in\cat{C}$, then there is a weight decomposition $w_{>0}X\to X\to w_{\leq 0}X\to w_{>0}X[1]$ with $w_{>0}X,w_{\leq 0}X\in\cat{C}$.
				But this is obvious since for the standard weight structure, $w_{>0}X$ and $w_{\leq 0}X$ are given by ``brutal truncation'' of $X$ (note that $X$ is, by definition, a complex of finitely generated projectives).
				
				The weight structure $w$ is left adjacent to the standard \textbf{t}-structure on $\Db(\rmodfd A)$ since $\cat{C}_{w\leq 0}$ precisely consists of the projective resolutions of objects in $\Db(\rmodfd A)^{t\leq 0}$.
				Note that $w$ is always bounded above, but bounded below if and only if $A$ has finite global dimension.
			\end{ex}
		\subsection{Derived projective covers}
			For an object $X\in\cat{D}$ we would like to find a minimal derived projective object approximating $X$.
			This is made precise by the following definition, which is dual to \cite[Ex.~C.5.7.9]{LurieSpectralAlgebraicGeometry}.
			\begin{dfn}
				\label{derprojcoverdef}
				A \emph{derived projective cover} of $X\in\cat{D}$ is a morphism $\pi\: P\to X$ such that $P$ is derived projective and $H_t^0(\pi)\: H_t^0(P)\to H_t^0(X)$ is a projective cover of $H_t^0(X)$ in $\heart_t$.
			\end{dfn}
			\begin{lemma}
				\label{derivedprojcoverunique}
				The derived projective cover of $X\in\cat{D}$ is unique up to isomorphism (if it exists).
			\end{lemma}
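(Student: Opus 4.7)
The plan is to reduce the uniqueness statement for derived projective covers to the uniqueness (up to isomorphism) of ordinary projective covers in the abelian category $\heart_t$, using the fact that $H_t^0 \colon \DProj_t(\cat{D}) \to \Proj(\heart_t)$ behaves like an equivalence for the purposes of Hom computations (\cref{derivedprojtruncationfullyfaithful}).

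Suppose $\pi \colon P \to X$ and $\pi' \colon P' \to X$ are both derived projective covers of $X$. Applying $H_t^0$ yields two projective covers $H_t^0(\pi) \colon H_t^0(P) \to H_t^0(X)$ and $H_t^0(\pi') \colon H_t^0(P') \to H_t^0(X)$ of $H_t^0(X)$ in $\heart_t$. By the classical uniqueness of projective covers in abelian categories, there exists an isomorphism $\bar{f} \colon H_t^0(P) \to H_t^0(P')$ with $H_t^0(\pi') \circ \bar{f} = H_t^0(\pi)$.

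Next I would lift $\bar{f}$ to a morphism $f \colon P \to P'$. Since $P$ is derived projective, part~2 of \cref{derivedprojtruncationfullyfaithful} gives bijections
\begin{equation*}
    \Hom_\cat{D}(P,P') \xrightarrow{\;\cong\;} \Hom_{\heart_t}(H_t^0(P),H_t^0(P')), \qquad \Hom_\cat{D}(P,X) \xrightarrow{\;\cong\;} \Hom_{\heart_t}(H_t^0(P),H_t^0(X)),
\end{equation*}
and similarly with $P'$ in place of $P$. This provides a unique $f \colon P \to P'$ lifting $\bar{f}$, and the identity $\pi' \circ f = \pi$ holds because both sides have the same image $H_t^0(\pi)$ under the injective map $H_t^0 \colon \Hom_\cat{D}(P,X) \to \Hom_{\heart_t}(H_t^0(P),H_t^0(X))$.

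It remains to show that $f$ is an isomorphism. Lifting $\bar{f}^{-1}$ in the same way yields $g \colon P' \to P$ with $H_t^0(g) = \bar{f}^{-1}$. Then $H_t^0(gf) = \id_{H_t^0(P)} = H_t^0(\id_P)$, and since the bijection $\End_\cat{D}(P) \cong \End_{\heart_t}(H_t^0(P))$ from \cref{derivedprojtruncationfullyfaithful} is injective, $gf = \id_P$; symmetrically $fg = \id_{P'}$. Thus $f$ is the desired isomorphism. I do not expect any serious obstacle here: the whole argument is a transport of the abelian uniqueness statement across the faithful functor $H_t^0$, and the main point is simply to invoke \cref{derivedprojtruncationfullyfaithful} in the right places.
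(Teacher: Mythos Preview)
Your proof is correct and follows essentially the same route as the paper's: reduce to uniqueness of projective covers in $\heart_t$ via $H_t^0$, then lift the resulting isomorphism back using \cref{derivedprojtruncationfullyfaithful}. You are slightly more explicit than the paper in verifying that the lift is an isomorphism (by lifting the inverse separately and checking the compositions), but this is the same argument spelled out in more detail.
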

			\begin{proof}
				Let $\pi_1\: P_1\to X$ and $\pi_2\: P_2\to X$ be derived projective covers of $X$.
				Then $H_t^0(\pi_1)\: H_t^0(P_1)\to H_t^0(X)$ and $H_t^0(\pi_2)\: H_t^0(P_2)\to H_t^0(X)$ are projective covers of $H_t^0(X)$ in $\heart_t$.
				Since projective covers are unique up to isomorphism, there is an isomorphism $g\: P_1\to P_2$ with $H_t^0(\pi_2)g=H_t^0(\pi_1)$, and by \cref{derivedprojtruncationfullyfaithful} there is an isomorphism $\hat{g}\: P_1\to P_2$ with $\pi_2\hat{g}=\pi_1$.
			\end{proof}
			\begin{lemma}
				Let $P$ be derived projective.
				Then $\pi\: P\to X$ is a derived projective cover of $X$ if and only if $t_{\leq 0}\pi\: P\to t_{\leq 0}X$ is a derived projective cover of $t_{\leq 0}X$.
			\end{lemma}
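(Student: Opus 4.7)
The plan is to reduce the two conditions to a single equality of morphisms in $\heart_t$, using that derived projectives live in $\cat{D}^{t\leq 0}$ and that $H_t^0$ does not see the difference between $X$ and $t_{\leq 0}X$.

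First I would note that, since $P$ is derived projective, we have $P\in\cat{D}^{t\leq 0}$, so $t_{\leq 0}P\cong P$ and the morphism $t_{\leq 0}\pi\colon P\to t_{\leq 0}X$ (obtained by the adjunction between $\cat{D}^{t\leq 0}\hookto\cat{D}$ and $t_{\leq 0}$) is the unique lift of $\pi$ through the canonical morphism $\iota\colon t_{\leq 0}X\to X$. In particular $\pi=\iota\circ t_{\leq 0}\pi$. This means that in both versions of the statement the source $P$ is already derived projective, so the only condition left to compare is the projective-cover condition on the respective $H_t^0$-images.

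Next I would compute $H_t^0(\iota)$. Applying $H_t^0$ to the \textbf{t}-decomposition triangle
\begin{equation*}
t_{\leq 0}X\longto{\iota}X\longrightarrow t_{>0}X\longrightarrow t_{\leq 0}X[1]
\end{equation*}
gives a long exact sequence in $\heart_t$. Since $t_{>0}X\in\cat{D}^{t\geq 1}$ we have $H_t^0(t_{>0}X)=0=H_t^{-1}(t_{>0}X)$, and since $t_{\leq 0}X\in\cat{D}^{t\leq 0}$ we have $H_t^0(t_{\leq 0}X[1])=H_t^1(t_{\leq 0}X)=0$. Thus $H_t^0(\iota)\colon H_t^0(t_{\leq 0}X)\to H_t^0(X)$ is an isomorphism.

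Combining the two steps, functoriality of $H_t^0$ and $\pi=\iota\circ t_{\leq 0}\pi$ give $H_t^0(\pi)=H_t^0(\iota)\circ H_t^0(t_{\leq 0}\pi)$, where $H_t^0(\iota)$ is an isomorphism. Hence $H_t^0(\pi)$ is a projective cover in $\heart_t$ if and only if $H_t^0(t_{\leq 0}\pi)$ is. Together with the observation above that the derived projectivity condition on $P$ is the same on both sides, this yields the claimed equivalence. I do not anticipate a main obstacle; the only subtlety is to recognise at the outset that, because $P$ is already derived projective, $t_{\leq 0}\pi$ denotes the adjoint lift $P\to t_{\leq 0}X$ rather than some truncation of $\pi$ on the source.
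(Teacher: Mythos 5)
Your proof is correct and is essentially an unpacking of the paper's one-line argument, which simply invokes the identity $H_t^0\circ t_{\leq 0}=H_t^0$; your explicit check that $H_t^0(\iota)$ is an isomorphism, combined with $\pi=\iota\circ t_{\leq 0}\pi$, is precisely what makes that identity usable here.
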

			\begin{proof}
				This is obvious since $H_t^0\circ t_{\leq 0}=H_t^0$.
			\end{proof}
			Recall that in a Krull--Schmidt abelian category, a morphism $\pi\: P\to X$ is a projective cover if and only if it satisfies one of the following equivalent conditions:
			\begin{tfae}
				\item $P$ is projective, $\pi$ an epimorphism, and for any epimorphism $\pi'\: P'\to X$ with $P'$ projective there is $g\: P'\to P$ with $\pi g=\pi'$, and any such $g$ is an epimorphism.
				\item $\pi\: P\to X$ is a minimal right approximation of $X$ by projectives.
			\end{tfae}
			If $X$ is simple, then moreover $\pi\: P\to X$ is a projective cover if and only if it satisfies one of the following equivalent conditions:
			\begin{tfae}
				\item $P$ is projective and $\pi\neq 0$, and for any non-zero $\pi'\: P'\to X$ with $P'$ projective there is $g\: P'\to P$ with $\pi g=\pi'$, and any such $g$ is the projection onto a direct summand.
				\item $P$ is projective, indecomposable, and $\pi\neq 0$.
			\end{tfae}
			The following lemma provides analogous characterizations of derived projective covers in more specific situations.
			In general, a good strategy to pass from statements about projective objects to statements about derived projective objects is to replace ``$f$ is an epimorphism'' by ``$\cone(f)\in\cat{D}^{t<0}$''.
			This can also be seen in \cref{derivedprojtruncationfullyfaithful} above, which is also the main reason behind this phenomenon. 
			\begin{lemma}
				\label{derprojcoverequivalentdefs}
				Assume that $\cat{D}$ has derived projectives with respect to $t$.
				\begin{enumerate}
					\item If $X\in\cat{D}^{t\leq 0}$, then for $\pi\: P\to X$ the following are equivalent:
						\begin{tfae}
							\item\label{derprojcovercond1} $\pi\: P\to X$ is a derived projective cover of $X$.
							\item\label{derprojcovercond2} $\pi\: P\to X$ satisfies the following conditions:
								\begin{itemize}
									\item $P$ is derived projective,
									\item $\cone(\pi)\in\cat{D}^{t<0}$,
									\item for $\pi'\: P'\to X$ with $P'$ derived projective and $\cone(\pi')\in \cat{D}^{t<0}$ there is $g\: P'\to P$ with $\pi g=\pi'$,
									\item and $\cone(g)\in\cat{D}^{t<0}$ for any such $g$.
								\end{itemize}
							\item\label{derprojcovercond3} $\pi\: P\to X$ is a minimal right approximation of $X$ by derived projective objects.
						\end{tfae}
					\item If $L\in\heart_t$ is simple, then for $\pi\: P\to L$ the following are equivalent:
						\begin{tfae}
							\item\label{derprojcoversimple1} $\pi\: P\to L$ is a derived projective cover of $L$.
							\item\label{derprojcoversimple2} $P$ is indecomposable derived projective and $\pi\neq 0$.
							\item\label{derprojcoversimple3} $\pi\: P\to L$ satisfies the following conditions:
								\begin{itemize}
									\item $P$ is derived projective,
									\item $\pi\neq 0$,
									\item for any non-zero $\pi'\: P'\to L$ with $P'$ derived projective there is $g\: P'\to P$ such that $\pi'=\pi g$,
									\item and any such $g$ is the projection onto a direct summand.
								\end{itemize}
						\end{tfae}
				\end{enumerate}
			\end{lemma}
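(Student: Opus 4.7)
The plan is to reduce both parts to standard characterizations of projective covers in the Krull--Schmidt abelian category $\heart_t$ by transporting conditions along the equivalence $H_t^0\colon\DProj_t(\cat{D})\to\Proj(\heart_t)$ from \cref{equivalencederivedprojheartproj}, the natural hom-bijection of \cref{derivedprojtruncationfullyfaithful}(2), and the indecomposability criterion of \cref{derivedprojtruncationfullyfaithful}(3). Note that $\heart_t$ is Krull--Schmidt since any $Y\in\heart_t$ has the same endomorphism ring in $\heart_t$ as in $\cat{D}$.

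As a preparatory step I would extend \cref{derprojsplit} from maps between derived projectives to maps $\pi'\colon P'\to X$ with $P'\in\DProj_t(\cat{D})$ and $X\in\cat{D}^{t\leq 0}$. Since $\cone(\pi')$ is an extension of $P'[1]\in\cat{D}^{t\leq -1}$ by $X\in\cat{D}^{t\leq 0}$, it lies in $\cat{D}^{t\leq 0}$, and applying $H_t^*$ to the triangle $P'\to X\to\cone(\pi')\to P'[1]$ yields a long exact sequence showing that $\cone(\pi')\in\cat{D}^{t<0}$ if and only if $H_t^0(\pi')$ is an epimorphism in $\heart_t$.

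With this dictionary in place, for Part 1 each of the three conditions translates into a condition on $H_t^0(\pi)$: (I) becomes, by definition, that $H_t^0(\pi)$ is a projective cover of $H_t^0(X)$; (II) becomes the usual lifting-property characterization of projective covers (every epi from a projective factors through $H_t^0(\pi)$, and every such factorization is itself an epi); and (III) becomes that $H_t^0(\pi)$ is a minimal right $\Proj(\heart_t)$-approximation (using that the hom-bijection is natural and induces an isomorphism of endomorphism rings, so automorphisms correspond). The three resulting conditions are well-known to be equivalent in any Krull--Schmidt abelian category. The assumption that $\cat{D}$ has derived projectives is used to ensure that the quantification over ``derived projective $P'$'' corresponds to quantification over ``projective in $\heart_t$'' after translation.

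For Part 2 I would specialize to $X=L$. By \cref{derivedprojtruncationfullyfaithful}(3) and the hom-bijection, condition (II) translates to ``$H_t^0(P)$ is indecomposable and $H_t^0(\pi)\neq 0$'', which is equivalent to ``$H_t^0(\pi)$ is a projective cover of $L$'': a nonzero map to the simple $L$ is automatically an epimorphism, and an epimorphism from an indecomposable projective is automatically a projective cover since no nonzero direct summand of the source can lie in the kernel. Condition (III) translates, via \cref{derprojsplit} to turn ``$g$ is a projection onto a direct summand'' into ``$H_t^0(g)$ is split epi'', into the dual lifting-property characterization of projective covers of simples, and is equivalent to (I) by the same standard abelian-category reasoning. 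The main technical content of the proof is the preparatory extension of \cref{derprojsplit}; after that, the argument is a routine transport along the full additive equivalence induced by $H_t^0$ on the relevant subcategories.
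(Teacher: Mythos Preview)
Your proposal is correct and relies on the same key ingredients as the paper's proof: the hom-bijection of \cref{derivedprojtruncationfullyfaithful}(2), the equivalence of \cref{equivalencederivedprojheartproj}, and the ``$\cone\in\cat{D}^{t<0}$ iff $H_t^0$ is epi'' criterion (which the paper also establishes and uses repeatedly). The only difference is organizational: the paper argues Part~1 as a direct cycle I)$\Rightarrow$II)$\Rightarrow$III)$\Rightarrow$I), doing some steps by hand in $\cat{D}$ (e.g.\ the factorization of $\pi'$ through $\pi$ via $\Hom_\cat{D}(P',-)$ applied to the cone triangle) rather than translating every condition wholesale to $\heart_t$, and reduces Part~2 III) to Part~1 rather than re-translating; but this is a matter of presentation rather than a genuinely different route.
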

			\begin{proof}\leavevmode
				\begin{enumerate}
					\item \cref{derprojcovercond1}$\imp$\cref{derprojcovercond2}: Let $X\in\cat{D}^{t\leq 0}$ and $\pi\: P\to X$ be a derived projective cover.
						From the triangle $P\to X\to\cone(\pi)\to P[1]$ we get $\cone(\pi)\in\cat{D}^{t\leq 0}$, and as $H_t^0(\pi)$ is an epimorphism we have $H_t^0(\cone(\pi))=0$, thus $\cone(\pi)\in\cat{D}^{t<0}$.
						Now let $\pi'\: P'\to X$ with $P'$ derived projective and $\cone(\pi')\in\cat{D}^{t<0}$.
						Applying $\Hom_\cat{D}(P',-)$ to the triangle $P\to X\to\cone(\pi)\to P[1]$ and using $\Hom_\cat{D}(P',\cone(\pi))=0$ (since $\cone(\pi)\in\cat{D}^{t<0}$ and $P'$ is derived projective) shows that $\pi'$ factors through $\pi$.
						So let $g\: P'\to P$ be any morphism with $\pi g=\pi'$.
						Since $P,P'\in\cat{D}^{t\leq 0}$ it is clear that $\cone(g)\in\cat{D}^{t\leq 0}$.
						Note that by \cref{truncatderivedproj} $H_t^0(\pi')\: H_t^0(P')\to H_t^0(X)$ is an epimorphism from a projective object in $\heart_t$.
						As $H_t^0(\pi)H_t^0(g)=H_t^0(\pi')$ and $H_t^0(\pi)\: H_t^0(P)\to H_t^0(X)$ is a projective cover it follows that $H_t^0(g)$ must be an epimorphism.
						This means $H_t^0(\cone(g))=0$, and hence $\cone(g)\in\cat{D}^{t<0}$.
						
						\cref{derprojcovercond2}$\imp$\cref{derprojcovercond3}: Let $\pi'\: P'\to X$ be any morphism with $P'$ derived projective.
						The long exact sequence obtained by applying $\Hom_\cat{D}(P',-)$ to the triangle $P\longto{\pi}X\to\cone{\pi}\to P[1]$ shows that $\Hom_\cat{D}(P',\pi)\: \Hom_\cat{D}(P',P)\to\Hom_\cat{D}(P',X)$ is surjective, since $\cone(\pi)\in\cat{D}^{t<0}$ and $P'$ is derived projective.
						Thus $\pi\: P\to X$ is a right approximation.
						
						For minimality, let $g\: P\to P$ with $\pi g=\pi$.
						Then by assumption $\cone(g)\in\cat{D}^{t<0}$, so by \cref{derprojsplit} $g$ is a split epimorphism.
						Since $\cat{D}$ is Krull--Schmidt, it follows that $g$ is an isomorphism.
						
						\cref{derprojcovercond3}$\imp$\cref{derprojcovercond1}: We show that $H_t^0(\pi)\: H_t^0(P)\to H_t^0(X)$ is a minimal right approximation by projectives.
						Let $P'\in\heart_t$ be projective and $\pi'\: P'\to X$.
						By assumption there is a derived projective $\hat{P}'$ with $H_t^0(\hat{P}')\cong P'$, and we get an induced morphism $\hat{\pi}'\: \hat{P}'\to t_{\geq 0}\hat{P}'=H_t^0(\hat{P}')\longto{\pi'}X$ with $H_t^0(\hat{\pi}')=\pi'$.
						\Cref{derivedprojtruncationfullyfaithful} implies that $H_t^0$ induces a bijection between morphisms $\hat{g}\: \hat{P}'\to P$ with $\hat{\pi}'=\pi\hat{g}$ and morphisms $g\: P'=H_t^0(\hat{P}')\to H_t^0(P)$ with $\pi'=H_t^0(\pi)g$, and the claim follows from this.
					\item \cref{derprojcoversimple1}$\iff$\cref{derprojcoversimple2}: If $P$ is derived projective, then $\pi\: P\to L$ is a derived projective cover of $L$ iff $H_t^0(\pi)\: H_t^0(P)\to L$ is a projective cover of $L$ in $\heart_t$ iff $H_t^0(\pi)\neq 0$ and $H_t^0(P)$ is indecomposable projective iff $\pi\neq 0$ and $P$ is indecomposable (by \cref{derivedprojtruncationfullyfaithful}).
						
						\cref{derprojcoversimple1}$\iff$\cref{derprojcoversimple3}: Observe that for $\pi\: P\to L$ we have $\cone(\pi)\in\cat{D}^{t\leq 0}$, and by \cref{derivedprojtruncationfullyfaithful} and simplicity of $L$ we get $\pi\neq 0$ iff $H_t^0(\pi)\neq 0$ iff $H_t^0(\pi)$ is an epimorphism iff $\cone(\pi)\in\cat{D}^{t<0}$.
						Similarly $g\: P\to P'$ satisfies $\cone(g)\in\cat{D}^{t\leq 0}$, and by \cref{derprojsplit} $g$ is the projection onto a direct summand if and only if $\cone(g)\in\cat{D}^{t<0}$.
						Therefore the claim follows from 1).\qedhere
				\end{enumerate}
			\end{proof}
		\subsection{Silting collections as derived projective covers}
			Silting collections provide an important source of derived projective objects.
			\begin{lemma}
				\label{siltingderivedproj}
				Let $\silting$ be a silting collection in $\cat{D}$ and $t$ its associated silting \textbf{t}-structure.
				Then any $P\in\silting$ is derived projective with respect to $t$.
			\end{lemma}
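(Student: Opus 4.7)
The proof is essentially immediate from the definitions, so the plan is just to unpack them carefully. Recall that $P$ derived projective with respect to $t$ means $P\in\cat{D}^{t\leq 0}$ together with $\Hom_\cat{D}(P,X[1])=0$ for all $X\in\cat{D}^{t\leq 0}$. Since $t=t_\silting=(\silting^{\perp_{>0}},\silting^{\perp_{<0}})$, we have $\cat{D}^{t\leq 0}=\silting^{\perp_{>0}}$ by definition.

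First I would verify that $P\in\cat{D}^{t\leq 0}$. By \cref{siltinghomnegativity}, $\Hom_\cat{D}(P,P'[m])=0$ for all $P,P'\in\silting$ and all $m>0$, so $P\in\silting^{\perp_{>0}}=\cat{D}^{t\leq 0}$.

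Next, for any $X\in\cat{D}^{t\leq 0}=\silting^{\perp_{>0}}$ the defining property of this subcategory gives $\Hom_\cat{D}(P',X[m])=0$ for all $P'\in\silting$ and all $m>0$. Specializing to $P'=P$ and $m=1$ yields $\Hom_\cat{D}(P,X[1])=0$, which is the second condition.

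There is no real obstacle here: the statement is a tautological consequence of the two formal ingredients (the automatic vanishing of positive self-extensions established in \cref{siltinghomnegativity}, and the definition of $t_\silting$).
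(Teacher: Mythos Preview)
Your proof is correct and follows essentially the same approach as the paper: both arguments verify $\silting\subseteq\silting^{\perp_{>0}}=\cat{D}^{t\leq 0}$ via \cref{siltinghomnegativity} and then read off the vanishing condition directly from the definition of $\cat{D}^{t\leq 0}=\silting^{\perp_{>0}}$. The paper's version is slightly terser (it phrases the second condition as $\cat{D}^{t<0}=\silting^{\perp_{\geq 0}}$), but the content is identical.
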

			\begin{proof}
				By definition we have $\cat{D}^{t<0}=\silting^{\perp_{\geq 0}}$, and also $\silting\subseteq\silting^{\perp_{>0}}=\cat{D}^{t\leq 0}$ by \cref{siltinghomnegativity}, which precisely means that $\silting$ consists of derived projective objects.
			\end{proof}
			The following theorem shows that derived projective covers provide a convenient description of the relation between a silting collection and the simple objects in the heart of the associated silting \textbf{t}-structure.
			In particular, this observation can be used to formulate the bijections between simple-minded collections and silting collections from \cite{KoenigYang}, see \cref{weighttbijections} below.
			This result is very similar to \cite[Thm.~2.4]{CoelhoSimoesPauksztelloPloog} and \cite[Thm.~5.3.1 II.]{BondarkoWeightTAndBack}.
			\begin{thm}
				\label{siltingderprojconditions}
				Let $t$ be a non-degenerate \textbf{t}-structure on $\cat{D}$ with finite-length heart.
				Let $\simplem$ be a full set of isomorphism representatives of the simple objects in $\heart_t$ and $\silting$ a full set of isomorphism representatives of the indecomposable derived projectives.
				Then the following are equivalent:
				\begin{tfae}
					\item\label{tsilting} $t$ is silting (and $\silting$ is the silting collection).
					\item\label{tsimplemsiltingrel} There is a bijection $\phi\: \silting\to\simplem$ such that for $P\in\silting$, $L\in\simplem$, $m\in\ints$ we have
						\begin{equation}
							\label{simplemsiltingrel}
							\Hom_\cat{D}(P,L[m])\cong\begin{cases}
								\End_\cat{D}(L)&\text{if $L=\phi(P)$, $m=0$},\\
								0&\text{otherwise}
							\end{cases}
						\end{equation}
						as left $\End_\cat{D}(L)$-modules.
					\item\label{tderprojcover} Every $L\in\simplem$ admits a derived projective cover (and $\silting$ is the set of these derived projective covers).
					\item\label{tenoughderproj} $\cat{D}$ has enough derived projectives with respect to $t$.
				\end{tfae}
			\end{thm}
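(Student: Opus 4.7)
I plan to prove the equivalences as a cycle (I) $\Rightarrow$ (II) $\Rightarrow$ (III) $\Leftrightarrow$ (IV) $\Rightarrow$ (I). The key tool throughout is \cref{derivedprojtruncationfullyfaithful}, which for any $P \in \DProj_t(\cat{D})$, any $X \in \cat{D}$, and any $m \in \ints$ gives
\[
\Hom_\cat{D}(P, X[m]) \cong \Hom_{\heart_t}(H_t^0(P), H_t^m(X)).
\]
Together with the standard fact that in a finite-length Krull--Schmidt abelian category the indecomposable projectives are exactly the projective covers of simples (with the simple top providing a bijection between indecomposable projectives and simples), this reduces most implications to direct bookkeeping.

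For (I) $\Rightarrow$ (II): by \cref{siltingderivedproj} the silting collection lies in $\DProj_t(\cat{D})$ and its members are indecomposable by definition, so it is contained in $\silting$. To see equality, the defining orthogonality of the silting \textbf{t}-structure, combined with the Hom formula applied to $X \in \heart_t$, forces the $H_t^0(P)$ for $P$ in the silting collection to detect zero in $\heart_t$, so these already exhaust all indecomposable projectives of $\heart_t$; using \cref{equivalencederivedprojheartproj} and the third part of \cref{derivedprojtruncationfullyfaithful}, the silting collection then equals $\silting$. The bijection $\phi$ sends $P \in \silting$ to the simple top of the indecomposable projective $H_t^0(P)$, and the Hom formula yields \eqref{simplemsiltingrel}: for $m \neq 0$ one has $H_t^m(L) = 0$, while for $m = 0$ the value is $\End_\cat{D}(L)$ precisely when $L$ is the top of $H_t^0(P)$. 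For (II) $\Rightarrow$ (III): given $L \in \simplem$, the element $P = \phi^{-1}(L)$ is indecomposable derived projective with $\Hom_\cat{D}(P, L) \cong \End_\cat{D}(L) \neq 0$, so by \cref{derprojcoverequivalentdefs} this yields a derived projective cover of $L$.

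The equivalence (III) $\Leftrightarrow$ (IV) is essentially formal: in the finite-length Krull--Schmidt heart $\heart_t$ every projective decomposes as a finite direct sum of indecomposable projectives, so $\heart_t$ has enough projectives iff every simple has a projective cover; combined with \cref{equivalencederivedprojheartproj} and \cref{derprojcoverequivalentdefs}, this translates into the statement on derived projective covers. For (IV) $\Rightarrow$ (I), the main content, I show $\cat{D}^{t\leq 0} = \silting^{\perp_{>0}}$, the other equality $\cat{D}^{t\geq 0} = \silting^{\perp_{<0}}$ being analogous. Via the Hom formula, $X \in \silting^{\perp_{>0}}$ is equivalent to $\Hom_{\heart_t}(H_t^0(P), H_t^m(X)) = 0$ for all $P \in \silting$ and all $m > 0$; since the $H_t^0(P)$ exhaust the indecomposable projectives of $\heart_t$ and $\heart_t$ has enough projectives, this forces $H_t^m(X) = 0$ for $m > 0$; by non-degeneracy of $t$ this is in turn equivalent to $X \in \cat{D}^{t\leq 0}$.

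The main obstacle is the final step: deducing $X \in \cat{D}^{t \leq 0}$ from the vanishing of positive cohomologies for possibly unbounded $X$ needs a non-trivial use of non-degeneracy, via the observation that $t_{>0} X$ then has all cohomologies zero and iterating the truncation places it in $\bigcap_n \cat{D}^{t \geq n} = 0$. A secondary subtle point is, in (I) $\Rightarrow$ (II), identifying the silting collection with the set of all indecomposable derived projectives, which requires extracting from the silting orthogonality that the $H_t^0(P)$ for $P$ in the silting collection already exhaust the indecomposable projectives of $\heart_t$.
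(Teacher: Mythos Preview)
Your proposal is correct and follows essentially the same route as the paper: the cycle \cref{tsilting}$\Rightarrow$\cref{tsimplemsiltingrel}$\Rightarrow$\cref{tderprojcover}$\Rightarrow$\cref{tenoughderproj}$\Rightarrow$\cref{tsilting}, with the same key ingredients (\cref{derivedprojtruncationfullyfaithful} for the Hom formula, \cref{truncatderivedproj} and \cref{equivalencederivedprojheartproj} to match indecomposable derived projectives with indecomposable projectives in $\heart_t$, and non-degeneracy to recover the aisle from cohomology vanishing). The only cosmetic differences are that you argue \cref{tderprojcover}$\Leftrightarrow$\cref{tenoughderproj} symmetrically (the backwards direction is redundant in the cycle), and in \cref{tsilting}$\Rightarrow$\cref{tsimplemsiltingrel} you first include the silting collection into $\silting$ and then argue equality via detection of zero in $\heart_t$, whereas the paper builds the bijection simple-by-simple using weak generation; both amount to the same verification.
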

			\begin{proof}
				\cref{tsilting}$\imp$\cref{tsimplemsiltingrel}: Let $\silting'$ be a silting collection with $t=(\silting'^{\perp_{<0}},\silting'^{\perp_{>0}})$.
				By \cref{siltingtnondeg} $\silting'$ weakly generates $\cat{D}$, and thus for each $L\in\simplem$ there is some $P\in\silting'$ and $m\in\ints$ with $\Hom_\cat{D}(P,L[m])\neq 0$.
				From $L\in\heart_t$ we get $m=0$, so using $P\in\cat{D}^{t\leq 0}$ we get
				\begin{equation*}
					0\neq \Hom_\cat{D}(P,L)=\Hom_{\cat{D}^{t\geq 0}}(t_{\geq 0}P,L)=\Hom_{\heart_t}(H_t^0(P),L).
				\end{equation*}
				Since $L$ is simple in $\heart_t$ it follows that there is an epimorphism $H_t^0(P)\to L$.
				As $P$ is indecomposable, so is $H_t^0(P)$ by \cref{derivedprojtruncationfullyfaithful}, and thus $H_t^0(P)$ is the projective cover of $L$ in $\heart_t$.
				From this it follows that $H_t^0(P)$, and (by \cref{derivedprojtruncationfullyfaithful} again) also $P$, is unique up to isomorphism.
				So we get a bijection $\phi\: \silting'\to\simplem$ by defining $\phi(P)=L$, and moreover
				\begin{equation*}
					\Hom_\cat{D}(P,L)\cong\Hom_{\heart_t}(H_t^0(P),L)\cong\End_{\heart_t}(L)
				\end{equation*}
				as left $\End_{\heart_t}(L)$-module, as desired.
				Finally, since $H_t^0(\silting')$ is a full set of indecomposable projectives in $\heart_t$ and $\silting'$ consists of derived projectives by \cref{siltingderivedproj}, it follows from \cref{equivalencederivedprojheartproj} that $\silting'=\silting$ is the set of indecomposable derived projectives.
				
				\cref{tsimplemsiltingrel}$\imp$\cref{tderprojcover}: Let $L\in\simplem$, $P=\phi^{-1}(L)$, and $\pi\: P\to L$ correspond to $\id_L$.
				Then $\pi\neq 0$, and moreover $H_t^0(P)$ (and thus, by \cref{derivedprojtruncationfullyfaithful}, also $P$) must be indecomposable since otherwise it would admit two simple quotients, which is impossible by \cref{simplemsiltingrel}.
				Thus $\pi\: P\to L$ is the derived projective cover of $L$ by \cref{derprojcoverequivalentdefs}.
				
				\cref{tderprojcover}$\imp$\cref{tenoughderproj}: Since every $L\in\simplem$ has a derived projective cover, it by definition has a projective cover in $\heart_t$.
				As $\heart_t$ is finite-length, it follows that $\heart_t$ has enough projectives.
				Moreover, the projective covers of the simple objects are a full set of isomorphism representatives of the indecomposable projectives in $\heart_t$.
				Thus the indecomposable projectives arise as $t$-truncations of derived projectives, and therefore $\cat{D}$ has enough derived projectives.
				
				\cref{tenoughderproj}$\imp$\cref{tsilting}: By \cref{equivalencederivedprojheartproj}, $H_t^0(\silting)$ is the set of indecomposable projectives of $\heart_t$.
				We claim that $\cat{D}^{t\leq 0}=\silting^{\perp_{>0}}$ and $\cat{D}^{t\geq 0}=\silting^{\perp_{<0}}$.
				For $X\in\cat{D}$, $P\in\silting$ and $n\in\ints$ we get from \cref{derivedprojtruncationfullyfaithful}
				\begin{equation*}
					\Hom_\cat{D}(P,X[n])\cong\Hom_{\heart_t}(H_t^0(P),H_t^0(X[n]))=\Hom_{\heart_t}(H_t^0(P),H_t^n(X)).
				\end{equation*}
				Since $\{H_t^0(P)\mid P\in\silting\}$ is a full set of indecomposable projectives in $\heart_t$ and $\heart_t$ is finite-length, we have $H_t^n(X)=0$ if and only if $\Hom_\cat{D}(P,X[n])=0$ for all $P\in\silting$.
				The claim follows from this since by non-degeneracy of $t$ we know that $X\in\cat{D}^{t\leq 0}$ if and only if $H_t^n(X)=0$ for all $n>0$, and similarly for $\cat{D}^{t\geq 0}$.
			\end{proof}
			Using \cref{siltingderprojconditions} we can now show that not every triangulated category has derived projectives with respect to every \textbf{t}-structure.
			\begin{ex}
				\label{notenoughderproj}
				Let $A$ be a non-positive dg algebra such that $H^n(A)$ is finite-dimensional for all $n\in\ints$ but $H^*(A)$ is not finite-dimensional.
				Then $A$ is a silting object in $\D(A)$ (see e.g.~\cite[Appendix~A]{BruestleYang}), and the silting \textbf{t}-structure restricts to $\Dfd(A)$.
				These \textbf{t}-structures are the \emph{standard \textbf{t}-structures} on $\D(A)$ and $\Dfd(A)$.
				The heart of the standard \textbf{t}-structure on $\Dfd(A)$ is equivalent to $\rmodfd H^0(A)$ and thus has enough projectives.
				However, this \textbf{t}-structure is not silting, and so by \cref{siltingderprojconditions} there is no derived projective that truncates to the projective generator of $\heart_t$.
				
				To see that the standard \textbf{t}-structure is not silting, suppose for a contradiction that $P\in\Dfd(A)$ is a silting object defining the standard \textbf{t}-structure.
				Observe that then $t_{\geq n} P\cong t_{\geq n}A$ for all $n\leq 0$, since both these objects represent the functor $H^0(-)\colon \Dfd(A)^{t\leq 0}\cap\Dfd(A)^{t\geq n}\to\heart_t$ (this uses the equivalences $\rmodfd A\cong\heart_t\cong\rmodfd \End_{\heart_t}(H^0(P))$, and \cref{derivedprojtruncationfullyfaithful}).
				As $P\in\Dfd(A)$, there is $N\leq 0$ with $t_{\geq n}P\cong P$ for all $n\leq N$, and thus we also have $t_{\geq n}A\cong t_{\geq n}P\cong P$ for all $n\leq N$.
				But this implies $H^n(A)\cong H^n(P)=0$ for $n<N$ and thus $A\in\Dfd(A)$, a contradiction.
			\end{ex}
			As an application, we obtain the following criterion for left orthogonality between weight structures defined from silting collections and \textbf{t}-structures defined from simple-minded collections.
			\begin{thm}
				\label{simplemindedsiltingorthogonal}
				Let $\cat{D}$ be a triangulated category, $\silting$ a silting collection in $\cat{D}$ and $\simplem$ a simple-minded collection in $\cat{D}$.
				Let $w$ be the weight structure on $\cat{C}=\thick_\cat{D}(\silting)$ defined by $\silting$ and $t$ the \textbf{t}-structure on $\cat{D}$ defined by $\simplem$.
				Then the following are equivalent:
				\begin{tfae}
					\item\label{negativepart} $w$ is left orthogonal to $t$.
					\item\label{strictorthogonal} $w$ is \textbf{w}-\textbf{t}-strictly left orthogonal to $t$.
					\item\label{tfromsilting} $t=(\silting^{\perp_{>0}},\silting^{\perp_{<0}})$ is the silting \textbf{t}-structure associated with $\silting$.
					\item\label{simplemsiltingderprojcover} There is a bijection $\phi\: \silting\to\simplem$ such that $P\in\silting$ is the derived projective cover of $\phi(\silting)$.
					\item\label{simplemsiltingbij} There is a bijection $\phi\: \silting\to\simplem$ such that for $P\in\silting$, $L\in\simplem$ and $m\in\ints$ we have isomorphisms of left $\End_\cat{D}(L)$-modules
						\begin{equation*}
							\Hom_\cat{D}(P,L[m])\cong\begin{cases}
								\End_\cat{D}(L)&\text{if $L=\phi(P)$, $m=0$},\\
								0&\text{otherwise}.
							\end{cases}
						\end{equation*}
				\end{tfae}
			\end{thm}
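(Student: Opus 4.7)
The plan is to prove the circular chain II) $\imp$ I) $\imp$ III) $\imp$ II) and then derive the equivalences III) $\iff$ IV) $\iff$ V) directly from \cref{siltingderprojconditions}. The implication II) $\imp$ I) is immediate from the definitions, and III) $\imp$ II) is exactly \cref{siltingstrictlyorthogonal}. Thus the only substantive step is I) $\imp$ III).

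For I) $\imp$ III), by the explicit description of $w$ in \cref{weighttosilting} we have $\silting \subseteq \coheart_w = \cat{C}_{w\leq 0} \cap \cat{C}_{w\geq 0}$, so left orthogonality gives $\silting \perp \cat{D}^{t<0}$ and $\silting \perp \cat{D}^{t>0}$. Shifting, for $X \in \cat{D}^{t\leq 0}$ and $n > 0$ we have $X[n] \in \cat{D}^{t<0}$, so $\Hom_\cat{D}(P, X[n]) = 0$ for all $P \in \silting$; this yields $\cat{D}^{t\leq 0} \subseteq \silting^{\perp_{>0}}$, and dually $\cat{D}^{t\geq 0} \subseteq \silting^{\perp_{<0}}$. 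Since $t_\silting = (\silting^{\perp_{>0}}, \silting^{\perp_{<0}})$ is a \textbf{t}-structure on $\cat{D}$ by definition of a silting collection, it remains to show that $t = t_\silting$. I would invoke the standard fact that two \textbf{t}-structures with both negative and positive parts nested in the same direction must coincide: given $X$ in the larger negative part, the other \textbf{t}-decomposition places the positive truncation in the intersection of a negative and a strictly positive part of one \textbf{t}-structure, which vanishes by the orthogonality axiom.

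For III) $\iff$ IV) $\iff$ V) I apply \cref{siltingderprojconditions} to $t$. By \cref{ttosimplem} the \textbf{t}-structure arising from $\simplem$ is bounded with finite-length heart, and bounded \textbf{t}-structures are automatically non-degenerate: any $X \in \bigcap_n \cat{D}^{t\leq n}$ would lie in $\cat{D}^{t\leq m-1} \cap \cat{D}^{t\geq m}$ for the smallest $m$ with $X \in \cat{D}^{t\geq m}$, and this intersection vanishes by the orthogonality axiom. Hence the hypotheses of \cref{siltingderprojconditions} are satisfied, and its conditions I), II), III) translate exactly to our III), V), IV) respectively.

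The main subtlety is bookkeeping: in \cref{siltingderprojconditions} the symbol $\silting$ denotes the intrinsically defined set of indecomposable derived projectives of $t$, whereas here $\silting$ is given externally as the silting collection underlying $w$. Once III) has been established, the proof of \cref{siltingderprojconditions} (via \cref{siltingderivedproj} and \cref{equivalencederivedprojheartproj}) identifies these two sets, and this identification then propagates through the equivalences with IV) and V). Beyond this minor matching, I do not foresee significant obstacles.
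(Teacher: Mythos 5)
Your proposal is correct in substance, but it reorganizes the chain of implications differently from the paper. The paper runs a single cycle
\[
\text{V)}\imp\text{I)}\imp\text{II)}\imp\text{III)}\imp\text{IV)}\imp\text{V)},
\]
with V)$\imp$I) verified by a short direct Hom computation from the explicit descriptions in \cref{weighttosilting,ttosimplem}. You instead close a small cycle II)$\imp$I)$\imp$III)$\imp$II), with your I)$\imp$III) doing in one step what the paper achieves via I)$\imp$II) (\cref{orthogonalitylemma}) followed by the trivial II)$\imp$III); both ultimately invoke the same ``nested \textbf{t}-structures coincide'' trick, so this is a genuine but modest streamlining. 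Where the two approaches diverge more seriously is how IV) and V) are attached: you want to read all three of III), IV), V) off \cref{siltingderprojconditions}, whereas the paper only uses \cref{siltingderprojconditions} for III)$\imp$IV) and proves IV)$\imp$V)$\imp$I) by hand.

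The place that needs more care is exactly the bookkeeping you flag, and I think you resolve it in the wrong direction. In \cref{siltingderprojconditions}, $\silting$ is \emph{defined} to be the set of indecomposable derived projectives of $t$; call this $\silting_0$. Here $\silting$ is handed to you as an external silting collection, and a priori $\silting\neq\silting_0$: condition I) of \cref{siltingderprojconditions} ($t$ is silting) is strictly weaker than your condition III) ($t=t_\silting$ for \emph{this} $\silting$). You write that ``once III) has been established \dots\ this identification then propagates through the equivalences with IV) and V),'' but for the implications V)$\imp$III) and IV)$\imp$III) the identification $\silting=\silting_0$ cannot be taken from III) --- it has to be \emph{re-derived} under V) (respectively IV)). This is doable: from V) one first checks $\silting\subseteq\DProj_t(\cat{D})$ (the vanishing of $\Hom_\cat{D}(P,L[m])$ for $m\neq 0$ is exactly derived projectivity relative to the \textbf{t}-structure generated by $\simplem$), then that each $H_t^0(P)$ is a projective cover, hence that $\cat{D}$ has enough derived projectives and $\silting$ exhausts $\silting_0$ via \cref{equivalencederivedprojheartproj,derivedprojcoverunique}. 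So your proof can be completed, but the step you wave at as ``minor matching'' is precisely where the paper's choice to route V)$\imp$I) directly avoids the issue, and you should make the argument under each of IV) and V) explicit rather than attributing it to propagation from III).

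One further small point: \cref{siltingderprojconditions} lives in a subsection where $\cat{D}$ is assumed Krull--Schmidt. The paper's own proof inherits the same implicit assumption, so this is not a defect of your proposal specifically, but you should be aware that your appeal to \cref{siltingderprojconditions} carries it along. Your verification that boundedness implies non-degeneracy is correct (any $m$ with $X\in\cat{D}^{t\geq m}$ works; there is no need to speak of a ``smallest'' such $m$, which does not exist).
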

			\begin{proof}
				\cref{negativepart}$\imp$\cref{strictorthogonal}: \textbf{w}-strictness follows from \cref{orthogonalitylemma}.
				For \textbf{t}-strictness, note that by \cref{orthogonalitylemma} we have $\cat{C}_{w\leq 0}=\cat{D}^{t\leq 0}\cap\cat{C}$ and $\cat{C}_{w\geq 0}=\prescript{\perp}{}{(\cat{D}^{t<0})}\cap\cat{C}$.
				Therefore
				\begin{equation*}
					\cat{D}^{t\geq 0}=(\cat{D}^{t<0})^\perp\subseteq(\cat{C}_{w<0})^\perp.
				\end{equation*}
				and similarly $\cat{D}^{t\leq 0}\subseteq (\cat{C}_{w>0})^\perp$.
				But by assumption both $t=(\cat{D}^{t\leq 0},\cat{D}^{t\geq 0})$ and $((\cat{C}_{w>0})^\perp,(\cat{C}_{w<0})^\perp)=(\silting^{\perp_{>0}},\silting^{\perp_{<0}})$ are \textbf{t}-structures on $\cat{D}$, and therefore they must agree.
				
				\cref{strictorthogonal}$\imp$\cref{tfromsilting}: This is clear from the construction of $w$ from $\silting$ in \cref{weighttosilting}.
				
				\cref{tfromsilting}$\imp$\cref{simplemsiltingderprojcover}: This is part of \cref{siltingderprojconditions}.
				
				\cref{simplemsiltingderprojcover}$\imp$\cref{simplemsiltingbij}: As $L\in\heart_t$ and $P$ is derived projective, we have $\Hom_\cat{D}(P,L[m])=0$ for $m\neq 0$.
				For $m=0$ we get from \cref{derivedprojtruncationfullyfaithful}
				\begin{equation*}
					\Hom_\cat{D}(P,L)\cong\Hom_{\heart_t}(H_t^0(P),L)\cong\begin{cases}
						\End_{\heart_t}(L)&\text{if }L=\phi(P),\\
						0&\text{else},
					\end{cases}
				\end{equation*}
				since by definition $H_t^0(P)$ is the projective cover of $\phi(P)$ in $\heart_t$.
				
				\cref{simplemsiltingbij}$\imp$\cref{negativepart}:
				From the construction of $w$ and $t$ it follows that left orthogonality is equivalent to $\Hom_\cat{D}(P[m],L[n])=0$ for all $P\in\silting$, $L\in\simplem$, and either $m\leq 0$ and $n>0$, or $m\geq 0$ and $n<0$.
				This condition is obvious from the assumptions.
			\end{proof}
		\subsection{The WT correspondence revisited}
			In many important examples, \textbf{w}-\textbf{t}-strict orthogonality yields a bijection between weight structures and \textbf{t}-structures.
			A unified setup for this is provided by the following definition.
			\begin{dfn}
				\label{wtpairdef}
				Let $\cat{T}$ be an idempotent-complete triangulated category and $\cat{C},\cat{D}\subseteq\cat{T}$ thick subcategories.
				We call $(\cat{C},\cat{D})$ a \emph{WT pair in $\cat{T}$} if there is a weight structure $w$ and a non-degenerate \textbf{t}-structure $t$ on $\cat{T}$ such that
				\begin{enumerate}
					\item $w$ is left adjacent to $t$,
					\item $w$ and $t$ are bounded above,
					\item $\cat{C}=\thick_\cat{T}(\coheart_w)$ and $\cat{D}=\tria_\cat{T}(\heart_t)$,
					\item $\coheart_w$ is Krull--Schmidt with finitely many indecomposables, and $\heart_t$ is Hom-finite finite-length with finitely many simples.
				\end{enumerate}
			\end{dfn}
			In \cref{wtequalsst} below we will show that WT pairs are the same as the \emph{ST pairs} defined in \cite[Def.~4.3]{AdachiMizunoYang}.
			In contrast to that definition, we do not use silting collections and instead define WT pairs via weight structures and \textbf{t}-structures.
			The axioms for WT pairs are similar to the conditions from \cite[Prop.~4.17]{AdachiMizunoYang}.
			\begin{ex}
				\label{wtpairs}
				We list some known examples of WT pairs.
				\begin{enumerate}
					\item Let $A$ be a finite-dimensional algebra.
						Then $(\Kb(\rprojfg A),\Db(\rmodfd A))$ is a WT pair in $\D^-(\rmodfd A)$.
					
						By using the weight structure on $\Db(\rmodfd A)$ described in \cref{surprisingweightstruct}, one sees that $(\Kb(\rprojfg A),\Db(\rmodfd A))$ is also a WT pair in $\Db(\rmodfd A)$, cf.~\cite[Lemma~4.10]{AdachiMizunoYang}.
					\item Let $A$ be a non-positive dg algebra such that $H^n(A)$ is finite-dimensional for all $n\in\ints$.
						Then $(\perf(A),\Dfd(A))$ is a WT pair in $\Dfd^-(A)=\{X\in\D(A)\mid \sum_{k\geq n} \dim H^k(X)<\infty\;\forall n\in\ints\}$ by \cite[Ex.~3.4]{Fushimi}.
					\item Let $A$ be a non-positive dg algebra such that $H^0(A)$ is finite-dimensional and $\Dfd(A)\subseteq\perf(A)$.
						Then $(\perf(A),\Dfd(A))$ is a WT pair in $\perf(A)$ by \cite[Lemma~4.15]{AdachiMizunoYang}.
					\item Let $A$ be a positive dg algebra such that $\Dfd(A)=\thick_{\D(A)}(D(A))$, where $D=\Hom_{\dgrMod\groundfield}(-,\groundfield)$ is the $\groundfield$-linear duality functor.
						Then \cite[Cor.~4.1 and Thm.~7.1]{KellerNicolas} provide a weight structure on $\Dfd(A)$ and (via the equivalence provided by the Nakayama functor) a \textbf{t}-structure on $\thick(D(A))$.
						By taking K-injective resolutions, one sees that these are left adjacent to each other, and it follows that $(\Dfd(A),\thick(D(A)))$ is a WT pair in $\Dfd(A)$.
				\end{enumerate}
				Note that for 1)--3) it is very easy to check the axioms from \cite[Def.~4.3]{AdachiMizunoYang}, but hard to give explicit descriptions of the adjacent weight structure required for \cref{wtpairdef}.
				
				The first example also shows that the ambient triangulated category $\cat{T}$ for a WT pair is in general not unique.
				In fact, as observed in \cite[\S 6.1]{AdachiMizunoYang}, if $(\cat{C},\cat{D})$ is a WT pair in $\cat{T}$, then it is also a WT pair in any thick subcategory $\cat{T}'\subseteq\cat{T}$ containing both $\cat{C}$ and $\cat{D}$.
			\end{ex}
			The following proposition shows that WT pairs are the same as the \emph{ST pairs} defined in \cite[Def.~4.3]{AdachiMizunoYang}.
			\begin{prop}
				\label{wtequalsst}
				$(\cat{C},\cat{D})$ is a WT pair in $\cat{T}$ if and only if there is a finite silting collection $\silting$ in $\cat{T}$ such that
				\begin{itemize}
					\item $\Hom_\cat{T}(P,X)$ is finite-dimensional for all $X\in\cat{T}$ and $P\in\silting$,
					\item $\thick_\cat{T}(\silting)=\cat{C}$,
					\item $\cat{T}=\bigcup_{n\in\ints} \cat{T}^{t_\silting\leq n}$ and $\cat{D}=\bigcup_{n\in\ints} \cat{T}^{t_\silting\geq n}$.
				\end{itemize}
			\end{prop}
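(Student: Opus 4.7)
I will prove both directions directly, relying on the results established earlier. For the forward direction, suppose $(\cat{C},\cat{D})$ is a WT pair with witnessing pair $(w,t)$, and let $\silting$ be a full set of representatives of the indecomposable objects of $\coheart_w$, which is finite by the WT axioms. Adjacency of $w$ and $t$ combined with \cref{adjacencylemma} gives $t=(\silting^{\perp_{>0}},\silting^{\perp_{<0}})$, and applying \cref{weighttosilting}(2) to the restriction of $w$ to $\cat{C}=\thick_\cat{T}(\coheart_w)$ shows that $\silting$ is a silting collection in $\cat{T}$ with $t_\silting=t$. By \cref{siltingderivedproj} each $P\in\silting$ lies in $\DProj_t(\cat{T})$, so \cref{derivedprojtruncationfullyfaithful} identifies $\Hom_\cat{T}(P,X)$ with a Hom in $\heart_t$, which is finite-dimensional since $\heart_t$ is Hom-finite. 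The identity $\thick_\cat{T}(\silting)=\cat{C}$ is part of the axioms, $\cat{T}=\bigcup_n\cat{T}^{t\leq n}$ is boundedness above of $t$, and $\cat{D}=\bigcup_n\cat{T}^{t\geq n}$ follows from $\cat{D}=\tria_\cat{T}(\heart_t)$ because in a bounded-above \textbf{t}-structure any bounded-below object is in fact bounded, hence a finite iterated extension of shifts of heart objects.

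For the converse, set $t=t_\silting$, which is non-degenerate by \cref{siltingtnondeg} and bounded above by hypothesis. By \cref{siltingderivedproj,derivedprojtruncationfullyfaithful} the elements of $\silting$ are indecomposable derived projectives and the family $\{H_t^0(P)\mid P\in\silting\}$ is a projective generator of $\heart_t$; combined with the finite-dimensionality hypothesis this makes $B=\End_\cat{T}(\bigoplus_{P\in\silting}P)^{\op}$ a finite-dimensional algebra and $\Hom_\cat{T}(\bigoplus_{P\in\silting}P,-)$ an equivalence $\heart_t\xrightarrow{\sim}\rmodfd B$, so $\heart_t$ is Hom-finite and finite-length with finitely many simples. \Cref{siltingderprojconditions} then provides enough derived projectives and identifies $\silting$ with the set of derived projective covers of the simples. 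Applying \cref{derprojadjacency} supplies a left-adjacent bounded-above weight structure $w$ on $\cat{T}$ whose coheart is $\DProj_t(\cat{T})=\Kar_\cat{T}(\silting)$, which is Krull--Schmidt with finitely many indecomposables and thick-generates $\cat{C}$. The remaining identity $\cat{D}=\tria_\cat{T}(\heart_t)$ follows from $\cat{D}=\bigcup_n\cat{T}^{t\geq n}$ as in the forward direction.

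The main obstacle will be the mismatch between the hypotheses of \cref{derprojadjacency} (which assumes $\cat{T}$ itself is Hom-finite Krull--Schmidt) and the weaker assumption at hand (Hom-finiteness only against the finite set $\silting$). I expect this to be resolvable because the construction of the adjacent weight structure in \cite[Thm.~2.4]{CoelhoSimoesPauksztelloPloog} proceeds iteratively by taking right $\DProj_t(\cat{T})$-approximations, which here are automatically finite direct sums of members of $\silting$, so only Hom-finiteness against $\silting$ actually enters. Alternatively, one can build $w$ directly by setting $\cat{T}_{w\leq 0}=\cat{T}^{t\leq 0}$ and $\cat{T}_{w\geq 0}=\prescript{\perp}{}{(\cat{T}^{t<0})}$ and producing weight decompositions by induction on $t$-amplitude via derived projective covers, terminating thanks to boundedness above of $t$ and finite length of $\heart_t$. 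The claim that $\heart_t\cong\rmodfd B$ also deserves some care, but follows from standard Morita-type arguments since $\bigoplus_{P\in\silting}P$ is a projective generator of $\heart_t$ with finite-dimensional endomorphism ring.
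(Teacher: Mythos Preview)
Your overall strategy matches the paper's, and the backward direction is essentially identical: the paper cites \cite[Prop.~4.6]{AdachiMizunoYang} for the properties of $\heart_{t_\silting}$ where you give a direct Morita argument, then both use \cref{siltingderprojconditions} to get enough derived projectives and invoke \cite[Thm.~2.4]{CoelhoSimoesPauksztelloPloog} for the adjacent weight structure. Your discussion of the Hom-finiteness hypothesis in \cref{derprojadjacency} is apt; the paper simply cites the external references, whose arguments indeed only need Hom-finiteness against $\silting$.

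The forward direction, however, has a genuine gap. You assert that ``adjacency of $w$ and $t$ combined with \cref{adjacencylemma} gives $t=(\silting^{\perp_{>0}},\silting^{\perp_{<0}})$'', but \cref{adjacencylemma} only yields $\cat{T}^{t\leq 0}=\cat{T}_{w\leq 0}=(\cat{T}_{w>0})^\perp$ and $\cat{T}^{t\geq 0}=(\cat{T}_{w<0})^\perp$. Passing from $(\cat{T}_{w<0})^\perp$ to $\silting^{\perp_{<0}}$ is not formal: since $w$ is only bounded \emph{above}, $\cat{T}_{w<0}$ need not be generated by negative shifts of $\coheart_w$, so one only gets the inclusion $\cat{T}^{t\geq 0}\subseteq\silting^{\perp_{<0}}$. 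The reverse inclusion requires knowing that the projectives $H_t^0(P)$, $P\in\silting$, detect nonzero objects of $\heart_t$, i.e.\ that $\heart_t$ has enough projectives coming from $\silting$. This is exactly what the paper establishes first: from \cref{adjacencylemma} it gets $\coheart_w=\DProj_t(\cat{T})$, then uses the existence of the adjacent weight structure (via \cite[Thm.~5.3.1]{BondarkoWeightTAndBack}, the direction III)$\Rightarrow$II) of \cref{derprojadjacency}) to conclude that $\cat{T}$ has enough derived projectives, and finally applies \cref{siltingderprojconditions} (IV)$\Rightarrow$I)) to obtain that $t$ is silting with silting collection $\silting$. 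Once you fill in this step, the appeal to \cref{weighttosilting}(2) and the restriction of $w$ to $\cat{C}$ become unnecessary: the conclusion that $\silting$ is a silting collection in $\cat{T}$ is then immediate from \cref{siltingdef}.
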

			\begin{proof}
				``$\imp$'': Let $\silting$ be a set of isomorphism representatives of the indecomposable objects in $\coheart_w$.
				By \cref{orthogonalitylemma}, $w$ is \textbf{w}-\textbf{t}-strictly left orthogonal to $t$ on $\cat{T}$.
				Thus
				\begin{equation*}
					\coheart_w=\cat{T}_{w\geq 0}\cap\cat{T}_{w\leq 0}=\prescript{\perp}{}{(\cat{T}^{t<0})}\cap\cat{T}^{t\leq 0}=\DProj_t(\cat{T}),
				\end{equation*}
				so $\silting$ consists of the indecomposable derived projectives.
				Moreover, by  \cite[Thm.~5.3.1]{BondarkoWeightTAndBack} (cf.~\cref{derprojadjacency} above) $\cat{T}$ has enough derived projectives with respect to $t$.
				It follows from \cref{siltingderprojconditions} that $\silting$ is a (by assumption finite) silting collection, and $t$ the associated silting \textbf{t}-structure.
				For $P\in\silting$ and $X\in\cat{T}$, \cref{derivedprojtruncationfullyfaithful} gives $\Hom_\cat{T}(P,X)\cong\Hom_{\heart_t}(H_t^0(P),H_t^0(X))$, which is finite-dimensional by assumption.
				
				Since $t$ is bounded above on $\cat{T}$, we have (by definition) $\cat{T}=\bigcup_{n\in\ints} \cat{T}^{t\leq n}$.
				Finally, if $X\in\cat{T}^{t\geq m}$ then $X\in\cat{T}^{t\geq m}\cap\cat{T}^{t\leq n}$ for some $n\in\ints$ as $t$ is bounded above, so $X\in\tria_\cat{T}(\heart_t)=\cat{D}$.
				The converse inclusion is obvious since $\bigcup_{n\in\ints} \cat{T}^{t\geq n}\subseteq\cat{T}$ is a triangulated subcategory and $\heart_t\subseteq\bigcup_{n\in\ints} \cat{T}^{t\geq n}$.
				
				``$\pmi$'': Note that $\heart_{t_\silting}$ is Hom-finite finite-length with finitely many simples by \cite[Prop.~4.6]{AdachiMizunoYang}.
				As $t_\silting$ is a silting \textbf{t}-structure on $\cat{T}$, it is non-degenerate.
				By \cref{siltingderprojconditions} $\cat{T}$ has enough derived projectives with respect to $t_\silting$ and $\silting$ is a full set of indecomposable derived projectives in $\cat{T}$.
				Therefore by \cite[Thm.~2.4]{CoelhoSimoesPauksztelloPloog} (see \cref{derprojadjacency} above) there is a weight structure $w$ on $\cat{T}$ that is left adjacent to $t_\silting$.
				As $t_\silting$ is bounded above, so is $w$.
				
				By \cref{orthogonalitylemma}, $w$ is \textbf{w}-\textbf{t}-strictly left orthogonal to $t_\silting$.
				As above we get $\coheart_w=\DProj_t(\cat{T})$, and therefore $\coheart_w=\Kar_\cat{T}(\silting)$ as $\silting$ consists of the indecomposable derived projectives.
				In particular $\coheart_w$ contains finitely many indecomposables up to isomorphism, and hence $\cat{C}=\thick_\cat{T}(\silting)=\thick_\cat{T}(\coheart_w)$.
				Finally, we have $\tria_\cat{T}(\heart_{t_\silting})\subseteq\tria_\cat{T}(\cat{T}^{t_\silting\geq 0})\subseteq\cat{D}$.
				Conversely, if $X\in\cat{D}$ then $X\in\cat{T}^{t_\silting\geq m}\cap\cat{T}^{t_\silting\leq n}$ for some $m,n\in\ints$ (as $t_\silting$ is bounded above), so $X\in\tria_\cat{T}(\heart_{t_\silting})$.
			\end{proof}
			In \cite{Fushimi} it is shown that for a WT pair $(\cat{C},\cat{D})$ in an algebraic triangulated category $\cat{T}$ there are bijections between bounded weight structures on $\cat{C}$ and bounded \textbf{t}-structures $\cat{D}$ with finite-length heart.
			By the examples listed in \cref{wtpairs}, this unifies several earlier results:
			for the WT pair $(\Kb(\rprojfg A),\Db(\rmodfd A))$ in $\Db(\rmodfd A)$, where $A$ is a finite-dimensional algebra, the theorem is due to Koenig and Yang \cite{KoenigYang}.
			It seems the version for non-positive dg algebras with finite-dimensional total cohomology stated in \cite{BruestleYang} was originally a folklore result.
			Recently \cite{SuYang} and \cite{Zhang} provided new proofs using Koszul duality for $A_\infty$-algebras and dg algebras, respectively.
			For homologically smooth non-positive dg algebras the theorem is stated in \cite{KellerNicolasCluster}.
			
			The following theorem is a slight refinement of the results in \cite[\S 5]{KoenigYang} and \cite[Thm.~4.8]{Fushimi}, making the bijections explicit.
			\begin{thm}[WT correspondence]\leavevmode
				\label{weighttbijections}
				Let $(\cat{C},\cat{D})$ be a WT pair in an algebraic triangulated category $\cat{T}$.
				Taking derived projective covers gives one of the eight bijections fitting in the following commutative diagram from \cite{KoenigYang} and \cite{Fushimi}:
				\begin{equation*}
					\begin{tikzpicture}[bend angle=15,font=\normalfont\footnotesize]
						\node[align=center,draw,rounded corners,thick] (tstruct) at (4,3) {bounded \textbf{t}-structures $t$ on $\cat{D}$\\with finite-length heart};
						\node[align=center,draw,rounded corners,thick] (wstruct) at (-4,3) {bounded weight structures $w$ on $\cat{C}$\\with Krull--Schmidt coheart};
						\node[align=center,draw,rounded corners,thick] (simplem) at (4,-3) {simple-minded collections $\simplem$\\in $\cat{D}$};
						\node[align=center,draw,rounded corners,thick] (silting) at (-4,-3) {silting collections $\silting$ in $\cat{T}$\\with $\thick_\cat{T}(\silting)=\cat{C}$};
						
						\draw[bend right,-{>[scale=1.5]}] ([yshift=0.5\baselineskip] tstruct.west) to node[above,align=center,anchor=south] {$\cat{C}_{w\leq 0}=\prescript{\perp}{}{(\cat{D}^{t>0})}$\\$\cat{C}_{w\geq 0}=\prescript{\perp}{}{(\cat{D}^{t<0})}$} ([yshift=0.5\baselineskip] wstruct.east);
						\draw[bend right,-{>[scale=1.5]}] ([yshift=-0.5\baselineskip] wstruct.east) to node[below,align=center,anchor=north] {$\cat{D}^{t\leq 0}=(\cat{C}_{w>0})^\perp$\\$\cat{D}^{t\geq 0}=(\cat{C}_{w<0})^\perp$} ([yshift=-0.5\baselineskip] tstruct.west);
						
						\draw[bend left,-{>[scale=1.5]}] ([xshift=0.5\baselineskip] tstruct.south) to node[left,rotate=270,anchor=south] {simples in $\heart_t$} ([xshift=0.5\baselineskip] simplem.north) ;
						\draw[bend left,-{>[scale=1.5]}] ([xshift=-0.5\baselineskip] simplem.north) to node[right,align=center,rotate=90,anchor=south] {extension closure of\\positive\slash negative shifts} ([xshift=-0.5\baselineskip] tstruct.south);
						
						\draw[bend left,-{>[scale=1.5]}] ([xshift=0.5\baselineskip] wstruct.south) to node[left,rotate=270,anchor=south] {indecomposables in $\coheart_w$} ([xshift=0.5\baselineskip] silting.north);
						\draw[bend left,-{>[scale=1.5]}] ([xshift=-0.5\baselineskip] silting.north) to node[right,align=center,rotate=90,anchor=south] {Karoubi closure of\\extension closure of\\positive\slash negative shifts} ([xshift=-0.5\baselineskip] wstruct.south);
						
						\draw[bend right,-{>[scale=1.5]}] ([yshift=0.5\baselineskip] simplem.west) to node[above,align=center,anchor=south] {derived projective covers} ([yshift=0.5\baselineskip] silting.east);
						\draw[bend right,-{>[scale=1.5]}] ([yshift=-0.5\baselineskip] silting.east) to node[below,align=center,anchor=north] {simple tops of $H_t^0(\silting)$} ([yshift=-0.5\baselineskip] simplem.west);
						
						\node at ($(silting.east)!0.5!(simplem.west)$) {\cref{siltingderprojconditions}};
						\node at ($(silting.north)!0.5!(wstruct.south)$) {\rotatebox{90}{\cref{weighttosilting}}};
						\node at ($(simplem.north)!0.5!(tstruct.south)$) {\rotatebox{90}{\cref{ttosimplem}}};
					\end{tikzpicture}
				\end{equation*}
				The bijection at the top in particular says that the correspondence between weight structures and \textbf{t}-structures is given by \textbf{w}-\textbf{t}-strict orthogonality.
			\end{thm}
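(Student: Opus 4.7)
The plan is to verify each of the four pairs of arrows in the diagram separately, and then derive commutativity from \cref{siltingderprojconditions} and \cref{simplemindedsiltingorthogonal}. The existence of the top pair of bijections between bounded weight structures on $\cat{C}$ and bounded \textbf{t}-structures on $\cat{D}$ with finite-length heart is the content of \cite{KoenigYang} and its generalization in \cite{Fushimi}, which I would invoke as a black box. The left pair of bijections is a direct application of \cref{weighttosilting}, with the ambient category there taken to be $\cat{T}$ and using $\thick_\cat{T}(\silting)=\cat{C}$; the right pair is \cref{ttosimplem}; both come equipped with the explicit formulas displayed in the diagram.

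For the bottom pair I would argue as follows. Given a silting collection $\silting$ in $\cat{T}$ with $\thick_\cat{T}(\silting)=\cat{C}$, \cref{wtequalsst} shows that the silting \textbf{t}-structure $t_\silting$ restricts to a bounded \textbf{t}-structure on $\cat{D}$ with finite-length heart, and this is the \textbf{t}-structure matched with the associated $w$ by the top bijection. By \cref{siltingderprojconditions}, the simple tops of $H_{t_\silting}^0(P)$ for $P\in\silting$ form a simple-minded collection $\simplem$ in $\cat{D}$, and conversely every $L\in\simplem$ admits a derived projective cover, with those covers recovering $\silting$. This simultaneously constructs the bottom pair of bijections and shows that the right-hand vertical composition followed by the bottom agrees with going directly from silting collections to simple-minded collections.

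Commutativity of the full diagram then reduces to checking that, starting from a silting collection $\silting$, the weight structure $w$ produced via \cref{weighttosilting} and the \textbf{t}-structure $t$ produced from the associated $\simplem$ via \cref{ttosimplem} are related by the orthogonality formulas at the top. This is precisely what \cref{simplemindedsiltingorthogonal} provides, together with \cref{orthogonalitylemma}: the former gives \textbf{w}-\textbf{t}-strict left orthogonality of $w$ and $t$, and the latter unfolds this into the four displayed equalities. Reading \cref{simplemindedsiltingorthogonal} in the reverse direction yields the additional claim that the top bijection is characterized by \textbf{w}-\textbf{t}-strict orthogonality.

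The main subtlety, and the step I would take most care over, is the interplay of the three categories $\cat{C}\subseteq\cat{T}\supseteq\cat{D}$: the weight structure lives on $\cat{C}$, the \textbf{t}-structure on $\cat{D}$, while the silting collection and the orthogonality relations are most naturally formulated inside $\cat{T}$. \Cref{wtequalsst} provides the ambient adjacent pair on $\cat{T}$ needed to translate between these viewpoints, and \cref{orthogonalitylemma} ensures that the various levels of strictness of orthogonality collapse in the setting at hand, so beyond this bookkeeping no further argument should be required.
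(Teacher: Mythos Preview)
Your treatment of the vertical bijections and the bottom bijection is fine and matches the paper. The gap is in how you establish the orthogonality formulas at the top. You invoke \cref{simplemindedsiltingorthogonal}, but that theorem requires the silting collection $\silting$ and the simple-minded collection $\simplem$ to lie in a \emph{common} triangulated category, with $\simplem$ generating it. In the WT pair setup this fails in both possible ways: taking the common category to be $\cat{T}$ does not work because $\simplem$ only generates $\cat{D}=\tria_\cat{T}(\simplem)$, not $\cat{T}$; taking it to be $\cat{D}$ does not work because $\silting$ need not lie in $\cat{D}$ at all (for a non-positive dg algebra $A$ with $\dim_\groundfield H^*(A)=\infty$ one has $\cat{C}=\perf(A)\not\subseteq\Dfd(A)=\cat{D}$). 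The same obstruction blocks your appeal to \cref{orthogonalitylemma}, which assumes $\cat{C}\subseteq\cat{D}$.

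The paper therefore proves \textbf{w}-strictness by hand, and this is where the WT pair axioms are actually used. For $X\in\cat{C}\cap\prescript{\perp}{}{(\cat{D}^{t>0})}$ one truncates $X$ with respect to the silting \textbf{t}-structure $t_\silting$ on $\cat{T}$; since $t_\silting$ is bounded above on $\cat{T}$ (via \cite[Prop.~5.2]{AdachiMizunoYang}), the piece $t_{>0}X$ lands in $\cat{D}$, whence $\Hom_\cat{T}(X,t_{>0}X)=0$ and the triangle splits, giving $X\in\cat{T}^{t\leq0}\cap\cat{C}=\cat{C}_{w\leq0}$. The inclusion $\cat{C}\cap\prescript{\perp}{}{(\cat{D}^{t<0})}\subseteq\cat{C}_{w\geq0}$ additionally uses that $w$ is bounded on $\cat{C}$ and that there is a left adjacent weight structure on $\cat{T}$. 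This is exactly the step you flag as ``bookkeeping'' in your final paragraph; it is not routine, and it is the substantive content of the proof that your outline is missing.
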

			\begin{proof}
				In view of the proof of \cite[Thm.~4.8]{Fushimi}, we essentially only have to show that the bijection between weight structures and \textbf{t}-structures is given by \textbf{w}-\textbf{t}-strict orthogonality.
				For convenience of the reader, we first briefly describe the other bijections.
				
				It is well-known (cf.~\cref{ttosimplem} above) that simple-minded collections in $\cat{D}$ are in bijection with bounded \textbf{t}-structures on $\cat{D}$ with finite-length heart.
				Similarly (cf.~\cref{weighttosilting,weighttoclassicalsilting} above), classical silting collections in $\cat{C}$ are in bijection with bounded weight structures on $\cat{C}$ with Krull--Schmidt coheart.
				By \cite[Prop.~5.2]{AdachiMizunoYang}, classical silting collections in $\cat{C}$ are the same as silting collections $\silting$ in $\cat{T}$ with $\thick_\cat{T}(\silting)=\cat{C}$.
				Thus the vertical maps are bijections.
				
				Let $\silting$ be a silting collection in $\cat{T}$ with $\thick_\cat{T}(\silting)=\cat{C}$, and $t=t_\silting$ the \textbf{t}-structure on $\cat{T}$ defined by $\silting$.
				By \cref{siltingderprojconditions}, $\silting$ consists of the derived projective covers of the simple objects in $\heart_t$, which form a simple-minded collection $\simplem$ in $\cat{D}$ by \cite[Prop.~5.2 and Prop.~4.6]{AdachiMizunoYang}.
				From the definition of derived projective covers it is clear that $\simplem$ consists of the simple tops of $H_t^0(\silting)$.
				Sending $\silting$ to $\simplem$ defines a bijection by \cite[Prop.~4.6]{Fushimi}.
				
				It remains to show that the induced bijection between weight structures and \textbf{t}-structures is given by \textbf{w}-\textbf{t}-strict orthogonality.
				Let $w$ be a bounded weight structure on $\cat{C}$.
				By the above, $w$ is a bounded weight structures obtained from a silting collection $\silting$, and the corresponding \textbf{t}-structure on $\cat{D}$ is the restriction of the associated silting \textbf{t}-structure $t=t_\silting$ on $\cat{T}$.
				Hence \textbf{t}-strict left orthogonality is obvious.
				
				For \textbf{w}-strictness, let $X\in\cat{C}\cap\prescript{\perp}{}{(\cat{D}^{t>0})}$ and let $t_{\leq 0}X\to X\to t_{>0}X\to t_{\leq 0}X[1]$ be the \textbf{t}-decomposition of $X$ with respect to the \textbf{t}-structure $t$ on $\cat{T}$.
				As $t$ is bounded above on $\cat{T}$, we have $t_{>0}X\in\tria_\cat{T}(\heart_t)=\cat{D}$, so $t_{>0}X\in\cat{D}^{t>0}$ and thus $\Hom_\cat{T}(X,t_{>0}X)=0$ by assumption.
				Hence the \textbf{t}-decomposition triangle splits and gives $t_{\leq 0}X\cong t_{>0}X[-1]\oplus X$, which implies $X\cong t_{\leq 0}X\in\cat{T}^{t\leq 0}\cap\cat{C}=\cat{T}_{w\leq 0}\cap\cat{C}=\cat{C}_{w\leq 0}$.
				
				Let $X\in\cat{C}\cap\prescript{\perp}{}{(\cat{D}^{t<0})}$.
				As $w$ is bounded on $\cat{C}$, there is $n\in\ints$ with $X\in\cat{C}_{w>n}$.
				Let $Y\in\cat{T}^{t<0}$ and consider the \textbf{t}-decomposition $t_{\leq n}Y\to Y\to t_{>n}Y\to t_{\leq n}Y[1]$.
				Since $w$ is left adjacent to $t$ on $w$, we have $t_{\leq n}Y\in\cat{T}_{w\leq n}$, and since $t$ is bounded above on $\cat{T}$, we have $t_{>n}Y\in \cat{D}^{t<0}$.
				Since $X\in\cat{C}_{w>n}$ and $X\in\prescript{\perp}{}{(\cat{D}^{t<0})}$, applying $\Hom_\cat{T}(X,-)$ to the \textbf{t}-decomposition of $Y$ shows that $\Hom_\cat{T}(X,Y)=0$.
				Hence from \cref{adjacencylemma} we get $X\in\prescript{\perp}{}{(\cat{T}^{t<0})}\cap\cat{C}=\cat{T}_{w\geq 0}\cap\cat{C}=\cat{C}_{w\geq 0}$.
			\end{proof}
	\section{Koszul duality between simple-minded and silting collections}\label{koszulduality}
		In this section we prove a theorem about Koszul duality between finite simple-minded collections and finite silting collections.
		\subsection{dg Koszul duality}
			Recall that a \emph{dg enhancement} (originally called \emph{enhancement} in \cite{BondalKapranov}) of a triangulated category $\cat{T}$ is a \emph{pretriangulated dg category} $\widetilde{\cat{T}}$ in the sense of \cite[\S 3, Def.~1]{BondalKapranov} together with an equivalence $\cat{T}\cong H^0(\widetilde{\cat{T}})$.
			An object $X\in\cat{T}$ yields a dg functor $\Hom_{\widetilde{\cat{T}}}(X,-)\: \widetilde{\cat{T}}\to\dgrMod\End_{\widetilde{\cat{T}}}(X)$, which induces a triangulated functor $\Hom_{\widetilde{\cat{T}}}(X,-)\: \cat{T}\to\D(\End_{\widetilde{\cat{T}}}(X))$.
			
			Note that by \cite[\S 7.5]{KrauseChicago} and the first part of the proof of \cite[Thm.~4.3]{KellerDerivingDGCats}, dg-enhanced triangulated categories are precisely the stable categories of Frobenius categories considered in \cite{KellerDerivingDGCats}.
			\begin{ex}
				A dg enhancement $\widetilde{\D}(\cat{A})$ of the derived category $\D(\cat{A})$ of a dg category is given by the dg category of K-projective dg $\cat{A}$-modules, see \cite[\S 4.1]{KellerDerivingDGCats} for details.
				Here a dg $\cat{A}$-module $P$ is \emph{K-projective} if $H^0(\Hom_{\dgrMod\cat{A}}(P,N))=0$ for all acyclic dg $\cat{A}$-modules $N$.
				A dg $\cat{A}$-module $M$ can be viewed as an object of $\widetilde{\D}(\cat{A})$ by replacing it by a K-projective resolution, i.e.~a K-projective dg module $pM$ that is quasi-isomorphic to $M$.
				Note that by \cite[\S 3.1, p.~70]{KellerDerivingDGCats} K-projectivity is equivalent to the \emph{property (P)} considered in \cite{KellerDerivingDGCats}, and in particular K-projective resolutions are precisely the P-resolutions defined in \cite[\S 3.1]{KellerDerivingDGCats}.
				
				We abbreviate $\Hom_{\widetilde{\D}(\cat{A})}(-,-)=\RHom_\cat{A}(-,-)$, and by slight abuse of notation we also write $\REnd_\cat{A}(X)=\RHom_\cat{A}(X,X)$ for $X\in\D(\cat{A})$.
				Dually one can also use K-injective resolutions instead.
			\end{ex}
			The definition of the Koszul dual for non-positive dg algebras and positive dg algebras is a special case of the following general definition of Koszul duality for augmented dg categories from \cite[\S 10.2]{KellerDerivingDGCats}.
			\begin{dfn}\leavevmode
				\label{dgkoszuldual}
				\begin{enumerate}
				\item An \emph{augmented dg category} is a dg category $\cat{A}$ with pairwise non-isomorphic objects such that for every object $A\in\cat{A}$ there is a dg $\cat{A}$-module $\ol{A}$ (called \emph{augmenting dg module}) such that
					\begin{equation*}
						H^k(\ol{A})(B)\cong\begin{cases}
							\groundfield&\text{if }k=0, A=B,\\
							0&\text{otherwise}.
						\end{cases}
					\end{equation*}
				\item The \emph{dg Koszul dual} of an augmented dg category $\cat{A}$ is the dg category $\cat{A}^{!,\dg}$ with objects $\{A^!\mid A\in \cat{A}\}$ and morphisms
					\begin{equation*}
						\cat{A}^{!,\dg}(A^!,A'^!)=\RHom_\cat{A}(\ol{A},\ol{A'})=\Hom_{\dgrMod\cat{A}}(p\ol{A},p\ol{A'}),
					\end{equation*}
					where $p\ol{A}$ and $p\ol{A'}$ are K-projective resolutions of $\ol{A}$ and $\ol{A'}$.
				\end{enumerate}
			\end{dfn}
			\begin{rem}\leavevmode
				\begin{enumerate}
					\item $\cat{A}^{!,\dg}$ is well-defined only up to quasi-equivalence, cf.~\cite[\S 10.2]{KellerDerivingDGCats}.
					\item In \cite{KellerDerivingDGCats} the Koszul dual is defined more abstractly as a lift of the augmenting modules $\{\ol{A}\mid A\in\cat{A}\}$.
						With the notation from \cref{dgkoszuldual} the dg category $\cat{A}^{!,\dg}$ and the $\cat{A}^{!,\dg}$-$\cat{A}$-bimodule $\bigoplus_{B\in\cat{A}} p\ol{B}$ provide a lift, and hence the ``abstract'' definition agrees with the ``concrete'' definition we use here.
					\item By \cite[\S 10.2]{KellerDerivingDGCats}, $\cat{A}^{!,\dg}$ becomes an augmented dg category with augmenting dg modules $\ol{A^!}=\RHom_\cat{A}(\bigoplus_{B\in\cat{A}} p\ol{B},D\cat{A}(A,-))$.
						Here $D=\Hom_{\dgrMod\groundfield}(-,\groundfield)$ denotes the $\groundfield$-linear duality functor.
					\item If the augmenting dg modules $\ol{A}$ are compact and generate $\D(\cat{A})$ as triangulated subcategory closed under arbitrary coproducts (or, equivalently, if $\thick\{\ol{A}\mid A\in\cat{A}\}=\D^\comp(\cat{A})$), then by \cite[Lemma~10.5 ``The finite case'']{KellerDerivingDGCats} the $\cat{A}^{!,\dg}$-$\cat{A}$-dg bimodule $\bigoplus_{B\in\cat{A}} p\ol{B}$ provides an equivalence of categories $\D(\cat{A}^{!,\dg})\to\D(\cat{A})$, sending $\cat{A}^{!,\dg}(-,A^!)$ to $\ol{A}$ for all $A\in\cat{A}$.
				\end{enumerate}
			\end{rem}
			Important special cases (for us) are the dg Koszul duals of non-positive and positive dg algebras.
			For a more detailed discussion of Koszul duality in these cases see also \cite{FushimiDgKoszul}.
			\begin{ex}\leavevmode
				\label{negativepositivekoszulduality}
				Let $A$ be a dg algebra such that $H^0(A)$ is finite-dimensional and all simple $H^0(A)$-modules are $1$-dimensional.
				Take a complete set of primitive orthogonal idempotents of $H^0(A)$ and suppose they lift to idempotents $\{e_i\mid i\in I\}$ in $A$ with $\sum_{i\in I} e_i=1$.
				Consider $A=\bigoplus_{i,j\in I} e_iAe_j$ as a dg category with objects $I$ and morphisms $A(j,i)=e_iAe_j$.
				\begin{enumerate}
					\item If $A$ is a non-positive dg algebra, we can consider the simple $H^0(A)$-modules as dg $A$-modules concentrated in degree $0$ via the quasi-isomorphism $t_{\leq 0}A\to A$ and the quotient map $t_{\leq 0}A\tto H^0(A)$ where $t_{\leq 0}A$ denotes the truncation of $A$ to degrees~$\leq 0$.
						These ``simple dg $A$-modules'' make $A$ an augmented dg category, and (viewed as a dg algebra by taking the direct sum over the finitely many objects) the dg Koszul dual of $A$ is $A^{!,\dg}=\REnd_A(L)$ where $L$ is the sum of the simple $H^0(A)$-modules viewed as dg $A$-modules.
						Note that $A^{!,\dg}$ is a positive dg algebra as a consequence of \cite[Thm.~A.1]{BruestleYang}.
					\item If $A$ is a positive dg algebra, by \cite[Cor.~4.7]{KellerNicolas} there are unique dg $A$-modules $L_i$ corresponding to the simple $H^0(A)$-modules $e_iH^0(A)$, and these make $A$ an augmented dg category.
						Then the dg Koszul dual of $A$ is $A^{!,\dg}=\REnd_A(\bigoplus_i L_i)$, cf.~\cite[Rem.~5.1]{KellerNicolas}.
						This is a non-positive dg algebra as a consequence of \cite[Lemma~5.2]{KellerNicolas}.
				\end{enumerate}
			\end{ex}
		\subsection{Koszul duality between simple-minded and silting collections}
			The following lemma provides a convenient description of certain subcategories of a compactly generated dg-enhanced triangulated category with a compact silting collection.
			This is a slight generalization of \cite[Lemma~3.1]{KalckYangII}, although its proof uses essentially the same arguments.
			\begin{lemma}
				\label{siltingequivalence}
				Let $\cat{T}=H^0(\widetilde{\cat{T}})$ be a compactly generated dg-enhanced triangulated category.
				For a compact silting collection $\silting$ in $\cat{T}$ such that $\End_\cat{T}(\bigoplus_{P\in\silting} P)$ is finite-dimensional, let $t$ be its associated silting \textbf{t}-structure and $\cat{D}\subseteq\cat{T}$ be the triangulated subcategory generated by the simple objects in $\heart_t$.
				Let $E=\End_{\widetilde{\cat{T}}}(\bigoplus_{P\in\silting}P)$.
				\begin{enumerate}
					\item There is an equivalence $\D(E)\to\cat{T}$ that takes the simple $E$-modules to the simple objects in $\heart_t$.
						Moreover it identifies $\perf(E)$ with $\thick_\cat{T}(\silting)$, and $\Dfd(E)$ with $\cat{D}$.
					\item  If $H^*(E)$ is finite-dimensional, then $\thick_\cat{T}(\silting)=\cat{T}^\comp\subseteq \cat{D}$.
				\end{enumerate}
			\end{lemma}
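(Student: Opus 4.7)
The plan is to apply Keller's equivalence theorem \cite{KellerDerivingDGCats} to the object $M=\bigoplus_{P\in\silting}P$, then read off both statements from well-known properties of the derived category of a non-positive dg algebra. The first step is to verify Keller's hypotheses. Since $\End_\cat{T}(M)$ contains $\bigoplus_{P\in\silting}\End_\cat{T}(P)$ as a subalgebra and is finite-dimensional by assumption, $\silting$ must be finite, so $M$ is a finite coproduct of compact objects and hence compact. By \cref{siltingtnondeg} $\silting$ weakly generates $\cat{T}$, and a compact weak generator of a compactly generated triangulated category is a compact generator. Therefore $\RHom_{\widetilde{\cat{T}}}(M,-)$ and its adjoint induce an equivalence $F\: \D(E)\to\cat{T}$ with $F(E)\cong M$; restricting yields $F(\perf(E))=\thick_\cat{T}(M)=\thick_\cat{T}(\silting)$, since perfect complexes are exactly the compact objects of $\D(E)$.

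Next I would identify the silting \textbf{t}-structure $t$ on $\cat{T}$ with the standard \textbf{t}-structure on $\D(E)$ under $F$. By \cref{siltinghomnegativity} we have $H^n(E)=\Hom_\cat{T}(M,M[n])=0$ for $n>0$, so $E$ is cohomologically non-positive, and for $X\in\cat{T}$ the isomorphism $H^n(\RHom_{\widetilde{\cat{T}}}(M,X))\cong\Hom_\cat{T}(M,X[n])$ shows that $\silting^{\perp_{>0}}$ and $\silting^{\perp_{<0}}$ correspond to $\D(E)^{\leq 0}$ and $\D(E)^{\geq 0}$. The heart on the algebraic side is equivalent to $\rMod H^0(E)$, which is a finite-dimensional algebra by hypothesis; its simple objects, viewed as simple dg $E$-modules, thus correspond under $F$ to the simple objects in $\heart_t$, giving the claimed matching.

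It remains to show $F(\Dfd(E))=\cat{D}$. Since $\simplem_E\subseteq\Dfd(E)$ and $\Dfd(E)$ is triangulated, $\tria_{\D(E)}(\simplem_E)\subseteq\Dfd(E)$. Conversely, any $X\in\Dfd(E)$ has only finitely many nonzero $H^n(X)$, each of which is a finite-dimensional $H^0(E)$-module and hence a finite iterated extension of simples in $\rmodfd H^0(E)$; inductively building $X$ from its cohomology via the truncation triangles of the standard \textbf{t}-structure places $X$ in $\tria_{\D(E)}(\simplem_E)$. Applying $F$ yields $F(\Dfd(E))=\tria_\cat{T}(\simplem)=\cat{D}$, finishing part~1. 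For part~2, assuming $H^*(E)$ is finite-dimensional gives $E\in\Dfd(E)$, hence $\perf(E)=\thick_{\D(E)}(E)\subseteq\Dfd(E)$; transporting via $F$ yields $\thick_\cat{T}(\silting)\subseteq\cat{D}$, and since $\cat{T}^\comp$ corresponds to $\D(E)^\comp=\perf(E)$ under $F$, we also obtain the equality $\thick_\cat{T}(\silting)=\cat{T}^\comp$. The only non-routine point is the compatibility between the silting data and the standard module-theoretic structure on $\D(E)$, which is unlocked by the non-positivity of $E$ supplied by \cref{siltinghomnegativity}.
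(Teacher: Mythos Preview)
Your proposal is correct and follows essentially the same route as the paper: apply Keller's theorem (via \cref{siltingtnondeg} for weak generation) to get the equivalence $\D(E)\simeq\cat{T}$, match the silting \textbf{t}-structure with the standard one on $\D(E)$ using non-positivity of $E$, identify simples, and deduce $\Dfd(E)\leftrightarrow\cat{D}$ and $\perf(E)\leftrightarrow\thick_\cat{T}(\silting)$; for part~2 use $\perf(E)\subseteq\Dfd(E)$ when $H^*(E)$ is finite-dimensional. You spell out a few details the paper leaves implicit (finiteness of $\silting$, the explicit argument that $\Dfd(E)=\tria_{\D(E)}(\simplem_E)$, and the equality $\cat{T}^\comp=\thick_\cat{T}(\silting)$), but the strategy is identical.
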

			\begin{proof}\leavevmode
				\begin{enumerate}
					\item As $\silting$ weakly generates $\cat{T}$ by \cref{siltingtnondeg}, $\Hom_{\widetilde{\cat{T}}}(\bigoplus_{P\in\silting} P,-)\: \cat{T}\to\D(E)$ is an equivalence by (the proof of) \cite[Thm.~4.3]{KellerDerivingDGCats}.
						Its inverse takes $E$ to $\bigoplus_{P\in\silting}P$, and hence identifies $t=(\silting^{\perp_{>0}},\silting^{\perp_{<0}})$ with the standard \textbf{t}-structure on $\D(E)$, as this is the silting \textbf{t}-structure associated with the silting object $E$ in $\D(E)$.
						In particular it also identifies the simple objects in the hearts.
						The rest is clear since (by definition) $\cat{D}$ is the triangulated subcategory generated by the simple objects of $\heart_t$, while on the other side $\Dfd(E)$ is the triangulated subcategory generated by the simple dg $E$-modules (note that these lie in $\Dfd(E)$, since $H^0(E)=\End_\cat{T}(\bigoplus_{P\in\silting} P)$ is finite-dimensional).
					\item The assumption that $H^*(E)$ is finite-dimensional ensures that $\perf(E)\subseteq\Dfd(E)$, and thus we get $\thick_\cat{T}(\silting)\subseteq\cat{D}$ from 1).\qedhere
				\end{enumerate}
			\end{proof}
			The following result establishes a Koszul duality between simple-minded collections and silting collections, which was suggested by Keller.
			\begin{thm}
				\label{koszuldualitygeneral}
				Let $\cat{T}=H^0(\widetilde{\cat{T}})$ be a compactly generated dg-enhanced triangulated category.
				For a compact silting collection $\silting$ in $\cat{T}$ such that $\End_\cat{T}(\bigoplus_{P\in\silting} P)$ is finite-dimensional, let $\simplem$ be the set of simple objects in the heart of the silting \textbf{t}-structure associated with $\silting$, and suppose that $\End_\cat{T}(L)$ is $1$-dimensional for each $L\in\simplem$.
				\begin{enumerate}
					\item The dg algebra $\End_{\widetilde{\cat{T}}}(\bigoplus_{L\in\simplem}L)$ is the dg Koszul dual of $\End_{\widetilde{\cat{T}}}(\bigoplus_{P\in\silting}P)$.
					\item If $H^n(\End_{\widetilde{\cat{T}}}(\bigoplus_{P\in\silting}P))$ is finite-dimensional for all $n\in\ints$, then $\End_{\widetilde{\cat{T}}}(\bigoplus_{P\in\silting}P)$ is the dg Koszul dual of $\End_{\widetilde{\cat{T}}}(\bigoplus_{L\in\simplem}L)$.
				\end{enumerate}
			\end{thm}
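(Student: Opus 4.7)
The plan is to pull back $\widetilde{\cat{T}}$ to a dg-algebra model via \cref{siltingequivalence} and then apply the explicit computations of dg Koszul duals in the non-positive and positive cases from \cref{negativepositivekoszulduality}.

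For part (1), set $E=\End_{\widetilde{\cat{T}}}(\bigoplus_{P\in\silting}P)$. Since $\silting$ is silting, $H^n(E)=0$ for all $n>0$ by \cref{siltinghomnegativity}, so $E$ is quasi-isomorphic to its truncation $\tau_{\leq 0}E$, a non-positive dg algebra with $H^0$ equal to the finite-dimensional algebra $\End_\cat{T}(\bigoplus_{P\in\silting}P)$. By \cref{siltingequivalence}, the simple $H^0(E)$-modules correspond to the elements of $\simplem$, so the assumption that each $\End_\cat{T}(L)$ is one-dimensional translates to the simple $H^0(E)$-modules being one-dimensional. Thus part (1) of \cref{negativepositivekoszulduality} applies and identifies $E^{!,\dg}$ with $\REnd_E$ of the direct sum of the simple $E$-modules. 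Transporting along the dg-enhanced equivalence of \cref{siltingequivalence} then identifies this with $\End_{\widetilde{\cat{T}}}(\bigoplus_{L\in\simplem}L)$.

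For part (2), I would proceed by reduction to the non-positive case. Note first that $F=\End_{\widetilde{\cat{T}}}(\bigoplus_{L\in\simplem}L)$ is a positive dg algebra: $H^n(F)=0$ for $n<0$ because $\simplem$ is simple-minded, and $H^0(F)=\bigoplus_{L}\End_\cat{T}(L)$ is semisimple by the one-dimensionality assumption. The additional hypothesis $\dim H^n(E)<\infty$ for all $n\in\ints$ means $\tau_{\leq 0}E$ has finite-dimensional total cohomology. In this setting the construction of \cite{BruestleYang} (see also \cite{Zhang}) computes the dg Koszul dual in the other direction and recovers $\tau_{\leq 0}E$: it matches, in the notation of part (2) of \cref{negativepositivekoszulduality}, the unique dg $F$-module lifts $L_i\in\D(F)$ of the simples of $H^0(F)$ with (the images in $\D(F)$ of) the silting objects $P\in\silting$ under the relevant Koszul-duality equivalence, so that $\REnd_F(\bigoplus_i L_i)\cong\tau_{\leq 0}E$. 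Combining this with the quasi-isomorphism invariance of the dg Koszul dual gives $F^{!,\dg}\cong\tau_{\leq 0}E\cong E$.

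The main obstacle is the identification in part (2) between the abstract unique $F$-module lifts $L_i$ and the silting objects; this is precisely what is established in \cite{BruestleYang} and \cite{Zhang} in the non-positive setting with finite-dimensional total cohomology, and the reduction $E\rightsquigarrow\tau_{\leq 0}E$ is what allows us to import their result into the general dg-enhanced framework of the theorem.
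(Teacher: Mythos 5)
Your proof of part (1) is essentially the paper's proof: identify $E=\End_{\widetilde{\cat{T}}}(\bigoplus P)$ as a cohomologically non-positive dg algebra, transport the simple $H^0(E)$-modules to $\simplem$ via the equivalence of \cref{siltingequivalence}, and read off the dg Koszul dual from part (1) of \cref{negativepositivekoszulduality}. That part is fine.

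Part (2) has a genuine gap. You assert that the hypothesis ``$\dim H^n(E)<\infty$ for all $n\in\ints$'' means $\tau_{\leq 0}E$ has finite-dimensional \emph{total} cohomology, and on that basis you import the constructions of \cite{BruestleYang} and \cite{Zhang}, which do operate under the total-finite-dimensionality hypothesis. But finite-dimensionality in each cohomological degree does not imply finite-dimensional total cohomology: $H^*(E)=\bigoplus_n H^n(E)$ can be infinite-dimensional even though each summand is finite-dimensional (indeed \cref{notenoughderproj} is built around exactly such a dg algebra). So the reduction to the Br\"ustle--Yang/Zhang setting is not available under the stated hypothesis, and your argument only covers the strictly weaker case where $H^*(E)$ is genuinely finite-dimensional. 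The paper's proof of part (2) instead invokes \cite[Thm.~4.17]{FushimiDgKoszul}, a double-Koszul-duality statement that works precisely under the degreewise-finiteness hypothesis, which is why the theorem is stated in that generality; the Zhang-style construction you sketch is presented in the paper only as an alternative proof in the restricted settings of finite-dimensional algebras and non-positive dg algebras with finite-dimensional total cohomology (the theorem immediately following \cref{koszuldualitygeneral}). To fix your argument you would need to either strengthen the hypothesis to total finite-dimensionality, or replace the reduction to \cite{BruestleYang}/\cite{Zhang} with the degreewise-finite double duality result from \cite{FushimiDgKoszul}.
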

			\begin{proof}
				For brevity we write $P=\bigoplus_{P'\in\silting} P'$ and $L=\bigoplus_{L'\in\simplem} L'$.
				\begin{enumerate}
					\item By definition, the cohomology of $E=\End_{\widetilde{\cat{T}}}(P)$ is given by
						\begin{equation*}
							H^n(\End_{\widetilde{\cat{T}}}(P))=\Hom_\cat{T}(P,P[n])
						\end{equation*}
						and therefore is concentrated in non-positive degrees.
						By \cref{negativepositivekoszulduality} the Koszul dual of the non-positive dg algebra $E=\End_{\widetilde{\cat{T}}}(P)$ (note that the identity morphisms provide the required lifts of idempotents) is given by
						\begin{equation*}
							E^{!,\dg}=\REnd_E(L_E),
						\end{equation*}
						where $L_E$ is the sum of the simple $H^0(E)$-modules viewed as dg $E$-modules concentrated in degree $0$.
						To compute this, we use the equivalence $\D(E)\to\cat{T}$ from \cref{siltingequivalence}, which takes $L_E$ to $L$ and therefore provides a quasi-isomorphism
						\begin{equation*}
							\End_{\widetilde{\cat{T}}}(P)^{!,\dg}=E^{!,\dg}=\REnd_E(L_E)\simeq\End_{\widetilde{\cat{T}}}(L).
						\end{equation*}
					\item This follows from \cite[Thm.~4.17]{FushimiDgKoszul}, since $\End_{\widetilde{\cat{T}}}(L)$ is the dg Koszul dual of $\End_{\widetilde{\cat{T}}}(P)$ by 1).\qedhere
				\end{enumerate}
			\end{proof}
			In the case of finite-dimensional algebras (and analogously for non-positive dg algebras with finite-dimensional total cohomology) one can prove \cref{koszuldualitygeneral}.2) more directly.
			The proof is interesting since it uses an approach that was used in \cite{Zhang} to construct silting collections corresponding to simple-minded collections in $\Dfd(A)$, where $A$ is a non-positive dg algebra with finite-dimensional total cohomology.
			\begin{thm}
				Let $A$ be a finite-dimensional algebra.
				Let $\simplem$ be a simple-minded collection in $\Db(\rmodfd A)$ such that $\End_{\Db(\rmodfd A)}(L)$ is $1$-dimensional for each $L\in\simplem$, and $\silting$ be the corresponding classical silting collection in $\Kb(\rprojfg A)$ under the bijection from \cref{weighttbijections}.
				Then $\REnd_A(\bigoplus_{P\in\silting} P)$ is the dg Koszul dual of $\REnd_A(\bigoplus_{L\in\simplem} L)$.
			\end{thm}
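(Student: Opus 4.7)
The plan is to compute the dg Koszul dual of $B := \REnd_A(\bigoplus L)$ using the positive-case of dg Koszul duality, and to identify it with $\REnd_A(\bigoplus P)$ via the biduality implicit in \cite{Zhang}'s construction of silting collections from simple-minded collections.

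Writing $L = \bigoplus_{L' \in \simplem} L'$, $P = \bigoplus_{P' \in \silting} P'$, $B = \REnd_A(L)$ and $E = \REnd_A(P)$: since $\silting \subseteq \Kb(\rprojfg A) = \D(\rMod A)^\comp$ is a compact silting collection in the compactly generated $\D(\rMod A)$ with finite-dimensional endomorphism algebra, part~1 of \cref{koszuldualitygeneral} yields $B \simeq E^{!,\dg}$. Since $E$ is non-positive, $B$ is positive by \cref{negativepositivekoszulduality}(1); and the hypothesis $\End_{\Db(\rmodfd A)}(L') \cong \groundfield$ for each $L' \in \simplem$, combined with the simple-minded axioms, ensures that $H^0(B) \cong \End(L)$ is semisimple with $1$-dimensional simple modules. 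By \cref{negativepositivekoszulduality}(2) together with \cite[Cor.~4.7]{KellerNicolas}, there exist unique dg $B$-modules $\{L_i^B\}_i$ corresponding to the simple $H^0(B)$-modules, and
\begin{equation*}
B^{!,\dg} \simeq \REnd_B\Bigl(\bigoplus_i L_i^B\Bigr).
\end{equation*}
It therefore suffices to produce a quasi-isomorphism of dg algebras $\REnd_B(\bigoplus_i L_i^B) \simeq E$.

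For this we follow the construction in \cite{Zhang}. Starting from $\simplem$ and the dg algebra $B$, Zhang's method explicitly builds the silting collection $\silting \subset \Db(\rmodfd A)$ by transporting the dg $B$-modules $L_i^B$ to $\D(\rMod A)$ via the natural $B$-$A$-dg-bimodule structure on $L$. The objects thus produced are (quasi-isomorphic to) the indecomposable derived projective covers $P_i \in \silting$ of the simples $L_i \in \simplem$ in the sense of \cref{derprojcoverdef}, recovering the WT correspondence of \cref{weighttbijections}; moreover, the bimodule construction simultaneously yields a quasi-isomorphism of dg algebras $\REnd_B(\bigoplus_i L_i^B) \simeq \REnd_A(\bigoplus_i P_i) = E$, which is the biduality of dg Koszul duality in this setting. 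Combined with the identification $B^{!,\dg} \simeq \REnd_B(\bigoplus_i L_i^B)$, this completes the proof.

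The main obstacle is the careful execution of Zhang's construction and the verification of the final quasi-isomorphism. Although $L_i^B$ and $P_i$ correspond under dg Koszul duality, the identification of their endomorphism dg algebras requires exploiting the full bimodule structure on $L$ rather than a naive application of the derived tensor $(-)\Lotimes_B L$, and relies on matching the characterizing properties of $L_i^B$ from \cite[Cor.~4.7]{KellerNicolas} (indecomposability, derived projectivity for the standard \textbf{t}-structure on $\D(B)$, prescribed heart-truncation) with those of $P_i$ via \cref{derprojcoverequivalentdefs} and \cref{derivedprojcoverunique}. The detailed argument, originally given in \cite{Zhang} for non-positive dg algebras with finite-dimensional total cohomology, applies to the present finite-dimensional setting essentially without modification.
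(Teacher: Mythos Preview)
Your overall strategy matches the paper's: identify the dg Koszul dual of the positive dg algebra $E^! = \REnd_A(L)$ as $\REnd_{E^!}$ of its simple dg modules, then invoke Zhang's construction to identify this with $\REnd_A(P)$. Your preliminary appeal to part~1 of \cref{koszuldualitygeneral} is harmless but unnecessary --- the positivity of $E^!$ and the description of $(E^!)^{!,\dg}$ follow directly from the definitions.

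However, your summary of Zhang's construction is misleading. You describe it as ``transporting the dg $B$-modules $L_i^B$ to $\D(\rMod A)$ via the natural $B$-$A$-dg-bimodule structure on $L$'', which suggests the functor $-\Lotimes_B L$. But that functor sends $B\mapsto L$, and on the restricted equivalence $\perf(B)\simeq\tria_{\D(A)}(L)=\Db(\rmodfd A)$ it carries the simple dg $B$-modules into $\Db(\rmodfd A)$, not to objects of $\Kb(\rprojfg A)$; there is no reason it should hit the silting objects $P_i$. The paper makes the actual route explicit: one passes through the \emph{standard} Koszul dual $A^! = \REnd_A(L^0)$ (with $L^0$ the sum of simple $A$-modules), using equivalences
\[
\Dfd(E^!)\xrightarrow{\ \Psi\ }\Dfd(A^!)\xrightarrow{\ \Phi\ }\Kb(\rinjfg A)\xrightarrow{\ \nu^{-1}\ }\Kb(\rprojfg A),
\]
where $\Phi = -\Lotimes_{A^!} L^0$ and $\Psi = -\Lotimes_{E^!}\Phi^{-1}(L)$, and $\nu^{-1}$ is the inverse Nakayama functor. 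The composite sends the sum of simple dg $E^!$-modules $H^0(E^!)$ to $P$, and this is what yields $\REnd_{E^!}(H^0(E^!))\simeq\REnd_A(P)$. The passage through $A^!$ and the Nakayama functor is not optional decoration: it is precisely what lands you in $\Kb(\rprojfg A)$ rather than $\Db(\rmodfd A)$. Your final paragraph gestures at the difficulty but does not name the correct mechanism; as written, the ``bimodule construction'' you describe would not produce the claimed quasi-isomorphism.
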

			\begin{proof}
				For brevity we write $L=\bigoplus_{L'\in\simplem} L'$.
				The cohomology of $E^!=\REnd_A(L)$ is given by
				\begin{equation*}
					H^n(E^!)=H^n(\REnd_A(L))=\Hom_{\Db(\rmodfd A)}(L,L[n])
				\end{equation*}
				and therefore is concentrated in non-negative degrees, and moreover
				\begin{equation*}
					H^0(E^!)=\Hom_{\Db(\rmodfd A)}(L,L)=\bigoplus_{L'\in\simplem} \End_{\Db(\rmodfd A)}(L')
				\end{equation*}
				is semisimple.
				Hence by definition the Koszul dual of $E^!$ is
				\begin{equation*}
					E=\REnd_{E^!}(H^0(E^!)).
				\end{equation*}
				Let $L^0$ be the sum of the simple $A$-modules and $A^!=\REnd_A(L^0)$ the Koszul dual of $A$ viewed as a dg algebra concentrated in non-positive degrees.
				As is explained in \cite{Zhang} we obtain a commutative diagram
				\begin{equation}
					\label{koszuldualitydiagram}
					\begin{tikzcd}
						\perf(E^!)\arrow{r}{\Psi}
						&\perf(A^!)\arrow{r}{\Phi}
						&\Db(\rmodfd A)\\
						\Dfd(E^!)\arrow{r}{\Psi}\arrow[hook]{u}
						&\Dfd(A^!)\arrow{r}{\Phi}\arrow[hook]{u}
						&\Kb(\rinjfg A)\arrow{r}{\nu^{-1}}\arrow[hook]{u}
						&\Kb(\rprojfg A)
					\end{tikzcd}
				\end{equation}
				where $\nu^{-1}$ is the inverse Nakayama functor and the horizontal functors $\Psi$ and $\Phi$ are equivalences defined by
				\begin{align*}
					\Phi&=-\Lotimes_{A^!} L^0,&
					\Psi&=-\Lotimes_E \Phi^{-1}(L).
				\end{align*}
				Note that the definition of $\Psi$ implicitly also uses the equivalence induced by the quasi-isomorphism $E^!\cong\REnd_{A^!}(\Phi^{-1}(L))$ induced by $\Phi$, which we leave out for brevity.
				
				Now observe that by construction of the diagram \cref{koszuldualitydiagram} the equivalences in the bottom row map $H^0(\REnd_A(L))=\End_{\Db(\rmodfd A)}(L)$ to $P$, and therefore we obtain a quasi-isomorphism
				\begin{equation*}
					\REnd_A(L)^!=E\cong\REnd_A(P).\qedhere
				\end{equation*}
			\end{proof}
			\begin{rem}
				Koszul duality of $\End_{\widetilde{\cat{T}}}(\bigoplus_{P\in\silting} P)$ and $\End_{\widetilde{\cat{T}}}(\bigoplus_{L\in\simplem} L)$ does not imply that $\silting$ and $\simplem$ correspond to each other.
				For a (trivial) counterexample one can simply shift $\simplem$ or $\silting$, and for further non-trivial examples with the same dg algebras occuring see \cref{koszulstd,koszulnonstdfaithful}.
			\end{rem}
		\subsection{Some small examples: the \texorpdfstring{$A_2$}{A2} quiver}
			We illustrate \cref{koszuldualitygeneral} by some examples over the algebra $A=\groundfield(2\to 1)$.
			For a simple-minded collection $\simplem$ in $\Db(\rmodfd A)$ and a silting collection $\silting$ in $\Kb(\rprojfg A)$, let $E=\REnd_A(\bigoplus_{P\in\silting} P)$ and $E^!=\REnd_A(\bigoplus_{L\in\simplem}L)$.
			To compute the dg Koszul duals of $E$ and $E^!$ we use the description of the dg Koszul dual from \cref{negativepositivekoszulduality}.
			
			Recall that for a dg algebra $B$ and dg $B$-modules $X$ and $Y$, the dg algebra $\REnd_B(X,Y)$ can be computed by replacing both $X$ and $Y$ by K-projective resolutions, i.e.~quasi-isomorphic perfect dg modules.
			Replacing both is convenient to determine the composition of morphisms, as otherwise one would have to use formal inverses to quasi-isomorphisms.
			The degree $n$ part of $\REnd_B(X,Y)$ consists of all $B$-linear morphisms $X\to Y[n]$ (not necessarily dg morphisms), and the differential is defined by $d(f)=df-(-1)^{|f|}fd$.
			Alternatively one can use K-injective resolutions.
			If $B$ has trivial differential, K-projective resolutions are just projective resolutions.
			\begin{ex}[The standard example]
				\label{koszulstd}
				Consider the standard simple-minded collection $\simplem=\{1,2\}$ consisting of the simple $A$-modules, and the corresponding standard silting collection $\silting=\{\begin{smallmatrix}1\\2\end{smallmatrix},2\}$ consisting of the indecomposable projective $A$-modules.
				\begin{enumerate}
					\item We have $E=\REnd_A(A)\cong A$, viewed as a non-positive dg algebra concentrated in degree~$0$ with trivial differential.
					\item To compute $E^!=\REnd_A(1\oplus 2)$ we replace the non-projective simple by its projective resolution: $1\cong (2\to\begin{smallmatrix}1\\2\end{smallmatrix})$.
						From this it follows that $E^!$ is $7$-dimensional, as it is the direct sum of
						\begin{align*}
							\RHom_A(2,2)&=\groundfield e_2,&
							\RHom_A(2,1)&=\groundfield f_0\oplus\groundfield f_{-1},\\
							\RHom_A(1,2)&=\groundfield g,&
							\RHom_A(1,1)&=\groundfield e_{11}\oplus\groundfield e_{12}\oplus\groundfield h,
						\end{align*}
						with the degrees and the differentials of the basis elements given by
						\begin{equation*}
							\begin{array}{c||c|c|c|c|c|c|c}
								&e_{11}&e_{12}&e_2&f_0&f_{-1}&g&h\\
								\hline
								\deg&0&0&0&0&-1&1&1\\
								d&h&-h&0&0&f_0&0&0
							\end{array}
						\end{equation*}
						The morphisms $e_{11}$, $e_{12}$ and $e_2$ are orthogonal idempotents, and the algebra structure of $E^!$ is given by the quiver with relations
						\begin{equation*}
							E^!=\groundfield\left(
							\begin{tikzcd}[column sep=small]
								e_{11}\arrow{rr}{h}\arrow[bend left]{rd}{g}&&e_{12}\\
								&e_2\arrow{ru}[swap]{f_0}\arrow[bend left]{lu}{f_{-1}}&
							\end{tikzcd}
							\right)/\begin{smallpmatrix}f_0g=h\\f_{-1}g=e_{11}\\hf_{-1}=f_0\\gf_{-1}=e_2\end{smallpmatrix}.
						\end{equation*}
						It follows that the cohomology $H^*(E^!)=\Ext_A^*(1\oplus 2,1\oplus 2)$ has a basis consisting of the classes of $e_1=e_{11}+e_{12}$, $e_2$ and $g$, where $g$ spans the $1$-dimensional $\Ext_A^1(1,2)$.
						It is easy to see that the map $H^*(E^!)\to E^!$ defined by sending this basis of $H^*(E^!)$ to these representatives is a quasi-isomorphism.
					\item As $E\cong A$ as dg algebras, there is nothing to do: the dg Koszul dual of $E$ is literally $\REnd_A(1\oplus 2)=E^!$.
					\item To compute the dg Koszul dual of $E^!$ we use the quasi-isomorphism  $E^!\cong H^*(E^!)=\groundfield(e_1\longto{g}e_2)$ with $|g|=1$ and trivial differential.
						As the differential is trivial and $E^!$ is actually (not just cohomologically) concentrated in positive degrees, the simple dg $E^!$-modules are just the simple modules over $H^0(E^!)=(E^!)^0$ with trivial action of $(E^!)^{>0}$.
						K-projective resolutions of these are given by
						\begin{align*}
							e_1&\cong \left(\begin{tikzcd}[row sep=small, column sep=small,ampersand replacement=\&]
								0\\\groundfield
							\end{tikzcd}\right)
							,&
							e_2&\cong\left(\begin{tikzcd}[row sep=small, column sep=small,ampersand replacement=\&]
								\groundfield\arrow{rd}{g}\&0\\
								\groundfield\arrow{r}[swap]{d}\&\groundfield
							\end{tikzcd}\right)
						\end{align*}
						Here the top row indicates the vertex $e_2$ and the bottom row the vertex $e_1$, and in both cases the left-most non-zero term is in degree~$0$.
						From this it follows that the dg Koszul dual is
						\begin{equation*}
							\REnd_{E^!}(e_1\oplus e_2)=\groundfield\left(
							\begin{tikzcd}[column sep=small]
								E_{21}\arrow{rr}{H}\arrow[bend left]{rd}{G}
								&&E_{22}\\
								&E_1\arrow{ru}[swap]{F_1}\arrow[bend left]{lu}{F_0}&
							\end{tikzcd}
							\right)/\begin{smallpmatrix}F_1G=H\\F_0G=E_{21}\\HF_0=F_1\\GF_0=E_1\end{smallpmatrix}
						\end{equation*}
						with dg structure
						\begin{equation*}
							\begin{array}{c||c|c|c|c|c|c|c}
								&E_{21}&E_{22}&E_1&F_0&F_1&G&H\\
								\hline
								\deg&0&0&0&0&1&0&1\\
								d&-H&H&0&F_1&0&0&0
							\end{array}
						\end{equation*}
						By similar arguments as in 2), this dg algebra is quasi-isomorphic to its cohomology, which is $\groundfield(E_2\longto{G}E_1)\cong E$ with $E_2=E_{21}+E_{22}$.
				\end{enumerate}
			\end{ex}
			\begin{ex}[Non-standard, faithful heart]
				\label{koszulnonstdfaithful}
				Consider the simple-minded collection $\simplem=\{\begin{smallmatrix}1\\2\end{smallmatrix},2[1]\}$ and the corresponding silting collection $\silting=\{\begin{smallmatrix}1\\2\end{smallmatrix},1\}$.
				This is obtained from the standard example by left mutation at $2$.
				\begin{enumerate}
					\item We have $E=\REnd_A(\begin{smallmatrix}1\\2\end{smallmatrix}\oplus 1)\cong\groundfield(*_{\begin{smallmatrix}1\\2\end{smallmatrix}}\longto{x} *_1)\cong A$, with $|x|=0$ and trivial differential.
						Explicitly, $x$ is the morphism $x\: \begin{smallmatrix}1\\2\end{smallmatrix}\to 1$.
					\item We have $E^!=\REnd_A(\begin{smallmatrix}1\\2\end{smallmatrix}\oplus 2[1])\cong\groundfield(*_{2[1]}\longto{y}*_{\begin{smallmatrix}1\\2\end{smallmatrix}})$ with $|y|=1$ and trivial differential, where $y\: 2[1]\to \begin{smallmatrix}1\\2\end{smallmatrix}[1]$.
					\item Note that $E\cong A$ and therefore the dg Koszul dual of $E$ is $\REnd_A(1\oplus 2)$ as described in \cref{koszulstd} above.
						A quasi-isomorphism $E^!\to\REnd_A(1\oplus 2)$ is given by $*_{\begin{smallmatrix}1\\2\end{smallmatrix}}\mapsto e_2$, $*_{2[1]}\mapsto e_{11}+e_{12}$, $y\mapsto g$, as mentioned in \cref{koszulstd}.
					\item We already computed the Koszul dual of $E^!$ in \cref{koszulstd}, where we saw that it is quasi-isomorphic to $E$.
				\end{enumerate}
			\end{ex}
			\begin{ex}[Non-standard, non-faithful heart]
				\label{koszulnonstdnonfaithful}
				Consider the simple-minded collection $\simplem=\{1,2[-1]\}$ and the corresponding silting collection $\silting=\{\begin{smallmatrix}1\\2\end{smallmatrix},2[-1]\}$.
				This is obtained from the standard example by right mutation at $2$.
				The corresponding heart is semisimple.
				\begin{enumerate}
					\item We have $E=\REnd_A(\begin{smallmatrix}1\\2\end{smallmatrix}\oplus 2[-1])\cong\groundfield(*_{2[-1]}\longto{x}*_1)$ with $|x|=-1$ and trivial differential.
						Explicitly, $x$ is the morphism $2[-1]\to \begin{smallmatrix}1\\2\end{smallmatrix}[-1]$.
					\item As $\simplem$ is obtained from the standard simple-minded collection by shifting one object, the algebra structure of $E^!=\REnd_A(1\oplus 2[-1])$ is the same as in \cref{koszulstd}.
						However the degrees and differentials are now given by
						\begin{equation*}
							\begin{array}{c||c|c|c|c|c|c|c}
								&e_{11}&e_{12}&e_2&f_0&f_{-1}&g&h\\
								\hline
								\deg&0&0&0&-1&-2&2&1\\
								d&h&-h&0&0&-f_0&0&0
							\end{array}
						\end{equation*}
					\item To compute the dg Koszul dual of $E$, we need to take K-projective resolutions of the two simple dg $E$-modules $*_{2[-1]}$ and $*_1$.
						These are given by
						\begin{align*}
							*_{2[-1]}&\cong \left(\begin{tikzcd}[row sep=small, column sep=small,ampersand replacement=\&]
								0\\\groundfield
							\end{tikzcd}\right)
							,&
							*_1&\cong\left(\begin{tikzcd}[row sep=small, column sep=small,ampersand replacement=\&]
								0\&0\&\groundfield\arrow{ld}{x}\\
								\groundfield\arrow{r}[swap]{d}\&\groundfield\&0
							\end{tikzcd}\right)
						\end{align*}
						with the right-most terms in degree $0$.
						Here the top row represents the vertex $*_1$ and the bottom row the vertex $*_{2[-1]}$.
						It follows that
						\begin{equation*}
							\REnd_E(*_{2[-1]}\oplus *_1)\cong\groundfield\left(
							\begin{tikzcd}[column sep=small]
								E_{11}\arrow{rr}{H}\arrow[bend left]{rd}{G}&&E_{12}\\
								&E_2\arrow{ru}[swap]{F_0}\arrow[bend left]{lu}{F_{-1}}&
							\end{tikzcd}
							\right)/\begin{smallpmatrix}F_0G=H\\F_{-1}G=E_{11}\\HF_{-1}=F_0\\GF_{-1}=E_2\end{smallpmatrix}.
						\end{equation*}
						with the degrees and differentials given by
						\begin{equation*}
							\begin{array}{c||c|c|c|c|c|c|c}
								&E_{11}&E_{12}&E_2&F_0&F_{-1}&G&H\\
								\hline
								\deg&0&0&0&-1&-2&2&1\\
								d&H&-H&0&0&-F_0&0&0
							\end{array}
						\end{equation*}
						It is obvious that $\REnd_E(*_1\oplus *_2)$ is (quasi-)isomorphic to $E^!$.
					\item Similarly to the previous examples, it follows that $E^!$ is quasi-isomorphic to its cohomology, which is $H^*(E^!)=\groundfield(e_1\longto{g}e_2)$ where $e_1=e_{11}+e_{12}$ and $|g|=2$.
						The dg Koszul dual of $E^!$ is computed similarly to \cref{koszulstd}:
						the K-projective resolutions of the simple dg $E^!$-modules are given by
						\begin{align*}
							e_1&\cong \left(\begin{tikzcd}[row sep=small, column sep=small,ampersand replacement=\&]
								0\\\groundfield
							\end{tikzcd}\right)
							,&
							e_2&\cong\left(\begin{tikzcd}[row sep=small, column sep=small,ampersand replacement=\&]
								\groundfield\arrow{rrd}{y}\&0\&0\\
								0\&\groundfield\arrow{r}[swap]{d}\&\groundfield
							\end{tikzcd}\right),
						\end{align*}
						and from this we get
						\begin{equation*}
							\REnd_{E^!}(e_1\oplus e_2)=\groundfield\left(
							\begin{tikzcd}[column sep=small,ampersand replacement=\&]
								E_{21}\arrow{rr}{H}\arrow[bend left]{rd}{G}\&\&E_{22}\\
								\&E_1\arrow{ru}[swap]{F_2}\arrow[bend left]{lu}{F_1}\&
							\end{tikzcd}
							\right)/\begin{smallpmatrix}F_2G=H\\F_1G=E_{21}\\HF_1=F_2\\GF_1=E_1\end{smallpmatrix}
						\end{equation*}
						with dg structure
						\begin{equation*}
							\begin{array}{c||c|c|c|c|c|c|c}
								&E_{21}&E_{22}&E_1&F_1&F_2&G&H\\
								\hline
								\deg&0&0&0&1&2&-1&1\\
								d&-H&H&0&F_2&0&0&0
							\end{array}
						\end{equation*}
						Similarly to the previous examples, $\REnd_{E^!}(e_1\oplus e_2)$ is quasi-isomorphic to its cohomology, which is $\groundfield(E_2\longto{G}E_1)=E$ where $E_2=E_{21}+E_{22}$.
				\end{enumerate}
			\end{ex}
	\section{Naturality of orthogonality}\label{naturality}
		In this section we show that \textbf{w}-\textbf{t}-strict orthogonality is natural with respect to weight exact functors and \textbf{t}-exact functors.
		This is a slight generalization of the results from \cite[Prop.~4.4.5]{Bondarko} for adjacent \textbf{t}-structures.
		The main result in this section (\cref{orthogonalitynaturality}) is proved in essentially the same way except for the more technical notation required to set up the statements, which simplifies a lot in most interesting cases.
		\begin{thm}
			\label{orthogonalitynaturality}
			Let $\cat{C}$, $\cat{C}'$, $\cat{D}$, $\cat{D}'$ be triangulated categories, $w$, $w'$ weight structures on $\cat{C}$ and $\cat{C}'$, and $t$, $t'$ \textbf{t}-structures on $\cat{D}$ and $\cat{D}'$, respectively.
			Let $\Phi\: \cat{C}\times\cat{D}\to\cat{A}$ and $\Phi'\: \cat{C}'\times\cat{D}'\to\cat{A}$ be dualities and suppose that $w$ (resp.~$w'$) is \textbf{w}-\textbf{t}-strictly left orthogonal to $t$ (resp.~$t'$) with respect to $\Phi$ (resp.~$\Phi'$).
			Let $F\: \cat{C}\to\cat{C}'$ and $G\: \cat{D}'\to\cat{D}$ be ``$\Phi$-$\Phi'$-adjoint'' in the sense that $\Phi'(F(X),Y)\cong\Phi(X,G(Y))$ naturally for $X\in\cat{C}$ and $Y\in\cat{D}'$.
			Then
			\begin{enumerate}
				\item $F(\cat{C}_{w>0})\subseteq\cat{C}'_{w>0}$ if and only if $G(\cat{D}'^{t'\leq 0})\subseteq\cat{D}^{t\leq 0}$.
				\item $F(\cat{C}_{w<0})\subseteq\cat{C}'_{w<0}$ if and only if $G(\cat{D}'^{t'\geq 0})\subseteq\cat{D}^{t\geq 0}$.
			\end{enumerate}
			In particular, $F$ is weight exact if and only if $G$ is \textbf{t}-exact.
		\end{thm}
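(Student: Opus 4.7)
The plan is to use \textbf{w}-\textbf{t}-strict orthogonality to translate each of the subcategory inclusions into a vanishing statement for the dualities $\Phi,\Phi'$, and then to transport one side to the other via the adjointness hypothesis $\Phi'(F(X),Y)\cong\Phi(X,G(Y))$. The argument then reduces to bookkeeping of shifts.

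First I would record the shifted forms of the orthogonality relations actually needed. From \textbf{w}-strict left orthogonality $\cat{C}'_{w'\geq 0}=\prescript{\perp_{\Phi'}}{}{(\cat{D}'^{t'<0})}$, together with the suspension isomorphism $\Phi'(-,-)\cong\Phi'(-[1],-[1])$, one obtains
\[
\cat{C}'_{w'>0}=\prescript{\perp_{\Phi'}}{}{(\cat{D}'^{t'\leq 0})},\qquad \cat{C}'_{w'<0}=\prescript{\perp_{\Phi'}}{}{(\cat{D}'^{t'\geq 0})}.
\]
Symmetrically, \textbf{t}-strict left orthogonality of $w$ to $t$ gives
\[
(\cat{C}_{w>0})^{\perp_\Phi}=\cat{D}^{t\leq 0},\qquad (\cat{C}_{w<0})^{\perp_\Phi}=\cat{D}^{t\geq 0}.
\]

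For part (1), I would then unpack $F(\cat{C}_{w>0})\subseteq\cat{C}'_{w'>0}$ via the first identity above as the condition $\Phi'(F(X),Y)=0$ for every $X\in\cat{C}_{w>0}$ and $Y\in\cat{D}'^{t'\leq 0}$. Applying the adjointness hypothesis converts this to $\Phi(X,G(Y))=0$ for the same pairs, which is exactly the statement that $G(Y)\in(\cat{C}_{w>0})^{\perp_\Phi}=\cat{D}^{t\leq 0}$ for every such $Y$, i.e., $G(\cat{D}'^{t'\leq 0})\subseteq\cat{D}^{t\leq 0}$. Each step is an iff, so the two inclusions are equivalent. Part (2) follows by the same two-step argument after swapping $>0$ with $<0$ and $\leq 0$ with $\geq 0$.

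The final claim is then formal: weight exactness of $F$ amounts to $F(\cat{C}_{w\geq 0})\subseteq\cat{C}'_{w'\geq 0}$ together with $F(\cat{C}_{w\leq 0})\subseteq\cat{C}'_{w'\leq 0}$, and since $F$ commutes with shifts this is equivalent to the pair of hypotheses in (1) and (2); analogously \textbf{t}-exactness of $G$ is the combination of the two conclusions. The only real obstacle in the proof is keeping track of the shift conventions relating the weight-theoretic subcategories (phrased with $>0$ and $<0$) to the \textbf{t}-theoretic ones (phrased with $\leq 0$ and $\geq 0$); once the strict orthogonality identities are put into the correct shifted forms, the content of the argument is nothing more than a single application of the adjunction.
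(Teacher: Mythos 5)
Your proposal is correct and takes essentially the same approach as the paper: both proofs rest on the shifted strict-orthogonality identities $\cat{C}'_{w'>0}=\prescript{\perp_{\Phi'}}{}{(\cat{D}'^{t'\leq 0})}$ and $(\cat{C}_{w>0})^{\perp_\Phi}=\cat{D}^{t\leq 0}$ transported along the $\Phi$-$\Phi'$-adjunction. You organize the argument as a single chain of equivalences whereas the paper splits it into two implications, but the underlying content is identical.
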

		\begin{proof}
			We only show the first part as the argument for the second claim is entirely analogous.
		
			Suppose $F(\cat{C}_{w>0})\subseteq\cat{C}'_{w'>0}$ and let $Y\in\cat{D}'^{t'\leq 0}$.
			By assumption we have $\cat{D}^{t\leq 0}=(\cat{C}_{w>0})^{\perp_\Phi}$, and thus
			\begin{equation*}
				G(Y)\in\cat{D}^{t\leq 0}\iff\cat{C}_{w>0}\perp_\Phi G(Y)\iff F(\cat{C}_{w>0})\perp_{\Phi'}Y.
			\end{equation*}
			But this condition is satisfied since by assumption $F(\cat{C}_{w>0})\subseteq\cat{C}'_{w'>0}$, and $Y\in\cat{D}'^{t'\leq 0}=(\cat{C}'_{w'>0})^{\perp_{\Phi'}}$.
			
			For the converse suppose $G(\cat{D}'^{t'\leq 0})\subseteq\cat{D}^{t\leq 0}$ and let $X\in\cat{C}_{w>0}$.
			By assumption we have $\cat{C}'_{w'>0}=\prescript{\perp_{\Phi'}}{}{(\cat{D}'^{t'\leq 0})}$, and thus
			\begin{equation*}
				F(X)\in\cat{C}'_{w'>0}\iff F(X)\perp_{\Phi'}\cat{D}'^{t'\leq 0}\iff X\perp_\Phi G(\cat{D}'^{t'\leq 0}).
			\end{equation*}
			But this is satisfied since by assumption $G(\cat{D}'^{t'\leq 0})\subseteq\cat{D}^{t\leq 0}$ and $X\in\cat{C}_{w>0}=\prescript{\perp_\Phi}{}{(\cat{D}^{t\leq 0})}$.
		\end{proof}
		Most notably it follows that the bijection between bounded \textbf{t}-structures with finite-length heart and bounded weight structures from \cite{KoenigYang} is a natural correspondence.
		To make this precise, following \cite{Chen} we write $\KbCAT$ for the strict $2$-category whose objects are the Hom-finite finite-length abelian categories with enough projectives and finitely many simples, with $1$-morphisms given by the functors $\Kb(\Proj(\cat{A}))\to\Kb(\Proj(\cat{B}))$ and $2$-morphisms the natural transformations between these.
		The $2$-category $\DbCAT$ is defined similarly, using functors $\Db(\cat{A})\to\Db(\cat{B})$ instead.
		Let $(\DbCAT)^\coop$ denote the bidual of $\DbCAT$, i.e.~the $2$-category obtained by reversing all $1$-morphisms and $2$-morphisms.
		By \cite[Thm.~3.2]{Chen} there is an equivalence $\KbCAT\to(\DbCAT)^\coop$ that is the identity on objects and sends a $1$-morphism (i.e.~a functor) $F\: \Kb(\Proj(\cat{B}))\to\Kb(\Proj(\cat{A}))$ to its \emph{right pseudo-adjoint} $F^\vee\: \Db(\cat{A})\to\Db(\cat{B})$, which is defined by natural isomorphisms
		\begin{equation*}
			\Hom_{\Db(\cat{A})}(F(X),Y)\cong\Hom_{\Db(\cat{B})}(X,F^\vee(Y))
		\end{equation*}
		for $X\in\Kb(\Proj(\cat{B}))$ and $Y\in\Db(\cat{A})$.
		\begin{cor}
			\label{naturalityfdalg}
			Let $\cat{A}$, $\cat{B}$ be Hom-finite finite-length abelian categories with enough projectives and finitely many simples.
			Let $t$, $t'$ be bounded \textbf{t}-structures on $\Db(\cat{A})$ and $\Db(\cat{B})$, respectively, and $w$, $w'$ the bounded weight structures on $\Kb(\Proj(\cat{A}))$ and $\Kb(\Proj(\cat{B}))$ corresponding to $t$ and $t'$ under the bijections from \cref{weighttbijections}.
			Assume that under the equivalence from \cite[Thm.~3.2]{Chen}, $G\: \Db(\cat{A})\to\Db(\cat{B})$ corresponds to $F\: \Kb(\Proj(\cat{B}))\to\Kb(\Proj(\cat{A}))$.
			Then $G$ is \textbf{t}-exact if and only if $F$ is weight exact.
		\end{cor}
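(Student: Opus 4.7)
The plan is to recognize this as a direct application of \cref{orthogonalitynaturality}. Apply that theorem with $\cat{C}=\Kb(\Proj(\cat{B}))$, $\cat{C}'=\Kb(\Proj(\cat{A}))$, $\cat{D}=\Db(\cat{B})$, $\cat{D}'=\Db(\cat{A})$, equipped with the weight structures $w'$ and $w$ on $\cat{C}$ and $\cat{C}'$, and the \textbf{t}-structures $t'$ and $t$ on $\cat{D}$ and $\cat{D}'$, respectively. The dualities are $\Phi=\Hom_{\Db(\cat{B})}(-,-)$ on $\cat{C}\times\cat{D}$ and $\Phi'=\Hom_{\Db(\cat{A})}(-,-)$ on $\cat{C}'\times\cat{D}'$, using the fully faithful inclusions $\Kb(\Proj(\cat{B}))\hookto\Db(\cat{B})$ and $\Kb(\Proj(\cat{A}))\hookto\Db(\cat{A})$ (which exist because both abelian categories have enough projectives).

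The first step is to verify that $F$ and $G$ are $\Phi$-$\Phi'$-adjoint in the sense of \cref{orthogonalitynaturality}. But this is precisely the defining property of the right pseudo-adjoint from \cite[Thm.~3.2]{Chen}: for $X\in\Kb(\Proj(\cat{B}))$ and $Y\in\Db(\cat{A})$,
\[
\Phi'(F(X),Y)=\Hom_{\Db(\cat{A})}(F(X),Y)\cong\Hom_{\Db(\cat{B})}(X,F^\vee(Y))=\Phi(X,G(Y)),
\]
naturally in $X$ and $Y$, since $G=F^\vee$ by assumption.

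The second step is to verify the orthogonality hypotheses. Since $\cat{A}$ is a Hom-finite finite-length abelian category with enough projectives and finitely many simples, Morita theory identifies it with $\rmodfd A$ for a finite-dimensional algebra $A$, and \cref{wtpairs}.1 then shows that $(\Kb(\Proj(\cat{A})),\Db(\cat{A}))$ is a WT pair in $\Db(\cat{A})$; the same applies to $\cat{B}$. Hence by the WT correspondence (\cref{weighttbijections}), $w$ is \textbf{w}-\textbf{t}-strictly left orthogonal to $t$ with respect to $\Phi'$, and similarly $w'$ to $t'$ with respect to $\Phi$. The final clause of \cref{orthogonalitynaturality} then yields ``$F$ is weight exact if and only if $G$ is \textbf{t}-exact'' directly. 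The only mild obstacle is the bookkeeping of which weight structure lives on which category and in which direction each functor runs; once $G=F^\vee$ is identified as a $\Phi$-$\Phi'$-adjoint pair in the above sense, the argument is essentially automatic.
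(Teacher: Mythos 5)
Your proof is correct and follows essentially the same route as the paper: apply \cref{orthogonalitynaturality} with the $\Phi$-$\Phi'$-adjointness supplied by the defining property of the right pseudo-adjoint from \cite[Thm.~3.2]{Chen}, and the \textbf{w}-\textbf{t}-strict left orthogonality supplied by \cref{weighttbijections}. Your additional care in spelling out the WT-pair hypothesis via the Morita identification with $\rmodfd A$ is a reasonable (if slightly more verbose) way of justifying the invocation of \cref{weighttbijections}, which the paper leaves implicit.
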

		\begin{proof}
			By \cref{weighttbijections} the weight structure $w$ (resp.~$w'$) is \textbf{w}-\textbf{t}-strictly left orthogonal to the \textbf{t}-structure $t$ (resp.~$t'$).
			Thus the result follows from \cref{orthogonalitynaturality}, since the construction of $G$ as a right pseudo-adjoint to $F$ is precisely the required adjunction property.
		\end{proof}
		\begin{rem}
			Using \cite[Prop.~3.4]{Chen}, we also obtain the following consequence of \cref{naturalityfdalg}:
			\begin{enumerate}
				\item Suppose $F\: \Kb(\Proj(\cat{B}))\to \Kb(\Proj(\cat{A}))$ is weight exact and admits a right adjoint $G$.
					Then $G$ extends to $\tilde{G}\: \Db(\cat{A})\to \Db(\cat{B})$, and $\tilde{G}$ is \textbf{t}-exact.
				\item If $\tilde{G}\: \Db(\cat{A})\to \Db(\cat{B})$ is \textbf{t}-exact and restricts to $G\: \Kb(\Proj(\cat{A}))\to \Kb(\Proj(\cat{B}))$, then $G$ admits a left adjoint $F$, which is weight exact.
			\end{enumerate}
			In this setup, there is an alternative proof of 2):
			By \cref{orthogonalitylemma} we have $\cat{C}_{w<0}=\cat{D}^{t<0}\cap \Kb(\Proj(\cat{A}))$, and analogously for $t'$ and $w'$.
			Now let $X\in\cat{C}'_{w'\geq 0}$ and $Y\in\cat{C}_{w<0}=\cat{D}^{t<0}\cap\Kb(\Proj(\cat{A}))$.
			Then
			\begin{equation*}
				G(Y)=\tilde{G}(Y)\in\cat{D}'^{t'<0}\cap\Kb(\Proj(\cat{B}))=\cat{C}'_{w'<0}
			\end{equation*}
			by \textbf{t}-exactness of $G$, and from the adjunction and $\cat{C}'_{w'\geq 0}\perp\cat{C}'_{w'<0}$ we get
			\begin{equation*}
				\Hom_{\Kb(\Proj(\cat{A}))}(F(X),Y)=\Hom_{\Kb(\Proj(\cat{B}))}(X,G(Y))=0,
			\end{equation*}
			so $F(X)\in\prescript{\perp}{}{(\cat{C}_{w<0})}=\cat{C}_{w\geq 0}$.
			
			Similarly for $X\in\cat{C}'_{w'\leq 0}=\cat{D}'^{t'\leq 0}\cap\Kb(\Proj(\cat{B}))$ and $Y\in\cat{D}^{t>0}$ we have
			\begin{equation*}
				\Hom_{\Db(\cat{A})}(F(X),Y)=\Hom_{\Db(\cat{B})}(X,\tilde{G}(Y))=0,
			\end{equation*}
			and thus $F(X)\in\cat{C}_{w\leq 0}=\cat{D}^{t\leq 0}\cap \Kb(\Proj(\cat{A}))$.
		\end{rem}
	\printbibliography
\end{document}